\theoremstyle{plain}
\newtheorem{theorem}{Theorem}[subsection]
\newtheorem{corollary}[theorem]{Corollary}
\newtheorem{lemma}[theorem]{Lemma}
\newtheorem{proposition}[theorem]{Proposition}
\newtheorem*{theorem*}{Theorem}
\newtheorem*{proposition*}{Proposition}
\newtheorem*{propaux}{Proposition \theoremauxnum}
\gdef\theoremauxnum{1}
\newenvironment{propff}[2][]{%
  \def\theoremauxnum{\ref{#2}}
  \begin{propaux}[#1]
}{%
  \end{propaux}
}
\newtheorem*{theoremaux}{Theorem \theoremauxnum}
\gdef\theoremauxnum{1}
\newenvironment{theoremff}[2][]{%
  \def\theoremauxnum{\ref{#2}}
  \begin{theoremaux}[#1]
}{%
  \end{theoremaux}
}
\newtheorem*{exampaux}{Proposition \theoremauxnum}
\gdef\theoremauxnum{1}
\newtheorem*{claim}{Claim}
\theoremstyle{definition}
\newtheorem{definition}[theorem]{Definition}
\newtheorem*{definition*}{Definition}
\newtheorem{example}[theorem]{Example}
\theoremstyle{remark}
\newtheorem*{remark}{Remark}
\newtheorem*{question}{Question}
\numberwithin{equation}{section}
\newcommand{\forkindep}[1][]{%
  \mathrel{
    \mathop{
      \vcenter{
        \hbox{\oalign{\noalign{\kern-.3ex}\hfil$\vert$\hfil\cr
              \noalign{\kern-.7ex}
              $\smile$\cr\noalign{\kern-.3ex}}}
      }
    }\displaylimits_{#1}
  }
}
\begin{document}
\title{Semigroups in Stable Structures}
\author{Yatir Halevi\footnote{The research leading to these results has received funding from the European Research Council under the European Union’s Seventh Framework Programme (FP7/2007-2013)/ERC Grant Agreement No. 291111.}}
\date{}

\maketitle
\begin{abstract}
Assume $G$ is a definable group in a stable structure $M$. Newelski showed that the semigroup $S_G(M)$ of complete types concentrated on $G$ is an inverse limit of the $\infty$-definable (in $M^{eq}$) semigroups $S_{G,\Delta}(M)$. He also shows that it is strongly $\pi$-regular: for every $p\in S_{G,\Delta}(M)$ there exists $n\in\mathbb{N}$ such that $p^n$ is in a subgroup of $S_{G,\Delta}(M)$. We show that $S_{G,\Delta}(M)$ is in fact an intersection of definable semigroups, so $S_G(M)$ is an inverse limit of definable semigroups and that the latter property is enjoyed by all $\infty$-definable semigroups in stable structures.
\end{abstract}

\section{Introduction}
A Semigroup is a set together with an associative binary operation. Although the study of semigroups stems in the start of the 20th century not much attention has been given to semigroups in stable structures. One of the only facts known about them is
\begin{proposition*}{\cite{unidim}}
A stable semigroup with left and right cancellation, or with left cancellation and right identity, is a group.
\end{proposition*}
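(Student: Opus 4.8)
The plan is to separate a one‑line model‑theoretic observation from a chain of elementary semigroup computations. Write $S$ for the semigroup, which we regard as definable in a stable structure (the case where $S$ is itself a stable structure being a special case). The model‑theoretic input I will use is the assertion that for every $a\in S$ the descending chain of definable sets $aS\supseteq a^2S\supseteq a^3S\supseteq\cdots$ stabilizes. Granting this, fix $n\ge 1$ with $a^nS=a^{n+1}S$; then $a^n\in a^{n+1}S$, so $a^n=a^{n+1}s=a^n(as)$ for some $s\in S$. The remainder of the argument turns this identity, together with whichever one‑sided hypotheses are in force, into a group structure.

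To see that the chain stabilizes, consider the single formula $\phi(x;y):=\exists z\,(x=yz)$, so that $a^nS=\phi(S;a^n)$ is an instance of $\phi$ with parameter $a^n$. If the chain failed to stabilize it would contain an infinite strictly descending subchain of instances of $\phi$, and choosing, in each set of this subchain, an element lying outside the following set would exhibit the order property for $\phi$, contradicting stability. (If one only assumes $S$ is $\infty$‑definable then $a^nS$ is merely type‑definable, and one needs instead the fact that a strictly descending chain made up of the instances of a fixed partial type cannot be infinite in a stable theory; this is the sole extra point requiring care.)

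Now the semigroup theory. Suppose first that $S$ has left cancellation and a right identity $e$. From $a^n=a^ne$ and $a^n=a^n(as)$ we get $a^ne=a^n(as)$, so left cancellation gives $e=as$: every element of $S$ has a right inverse with respect to $e$. Fixing $a$ and such an $a'$ with $aa'=e$, we compute $(a'a)(a'a)=a'(aa')a=a'ea=a'a$ (the last step using $a'e=a'$), so $a'a$ is idempotent; comparing this with $(a'a)e=a'a$ and left‑cancelling $a'a$ gives $a'a=e$. Then $ea=(aa')a=a(a'a)=ae=a$, so $e$ is in fact a two‑sided identity and $a'$ a two‑sided inverse of $a$; hence $S$ is a group. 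Suppose instead that $S$ has left and right cancellation. From $a^n=a^n(as)$, left‑cancelling the leading factor $a$ a total of $n-1$ times gives $a(as)=a$; put $e:=as$, so $ae=a$, and multiplying this on the right by $e$ yields $ae^2=ae$, whence $e^2=e$ by left cancellation. A cancellative semigroup with an idempotent $e$ has $e$ as a two‑sided identity: $e(eb)=(ee)b=eb$ forces $eb=b$ by left cancellation, and $(be)e=b(ee)=be$ forces $be=b$ by right cancellation. Finally the identity $e=as$ exhibits $s$ as a right inverse of $a$, and since left cancellation is available the argument of the first case now applies and shows that $S$ is a group.

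I expect the model‑theoretic ingredient to be the short part; the bulk of the care goes into arranging the algebraic deductions so that they survive the one‑sided hypotheses — promoting a one‑sided identity to a genuine one and one‑sided inverses to two‑sided ones — where each idempotent manipulation must be bracketed so as to call on only the cancellation law actually assumed. In particular, in the second case right cancellation enters exactly at the point where one checks that the idempotent $e$ is also a right identity.
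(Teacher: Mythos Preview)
The paper does not itself prove this proposition; both occurrences (in the Introduction and again in Section~\ref{S:big-wedge-defin}) simply quote it from \cite{unidim} without argument. So there is no in-paper proof to compare against. Your proof is correct and is the standard one: stability rules out an infinite strictly descending chain of uniformly definable sets $aS\supsetneq a^2S\supsetneq\cdots$, yielding $a^n=a^{n+1}s$ for some $n$ and $s$; the remaining manipulations correctly promote this to a group structure under each of the two hypotheses, and you are careful to invoke only the cancellation law actually assumed at each step (in particular, right cancellation enters exactly where you say it does, to pass from $(be)e=b\cdot e$ to $be=b$).

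For context, the closest thing the paper does prove is \Cref{P:idem} (every stable semigroup has an idempotent), where the model-theoretic input is phrased slightly differently: rather than a descending chain of right ideals, the author uses failure of the order property for $\theta(x,y)=\exists u\,(ua=au\wedge ux=y)$ along the sequence $(a^{3^n})$ to produce $C$ with $Ca^{3^n}=a^{3^m}$ for some $m<n$. This is interchangeable with your chain argument, both routes converting stability into a single identity of the shape ``a high power of $a$ equals a lower power times something,'' after which the algebra takes over.
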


Recently $\infty$-definable semigroups in stable structures made an appearance in a paper by Newelski \cite{Newelski-stable-groups}:

Let $G$ be a definable group inside a stable structure $M$.  Define $S_G(M)$ to be all the types of $S(M)$ which are concentrated on $G$. $S_G(M)$ may be given a structure of a semigroup by defining for  $p,q\in S_G(M)$:
\[p\cdot q=tp(a\cdot b/M),\]
where $a\models p, b\models q$ and $a\forkindep[M]b$.

Newelski gives an interpretation of $S_{G,\Delta}(M)$ (where $\Delta$ is a finite set of invariant formulae) as an $\infty$-definable set in $M^{eq}$ and thus $S_G(M)$ may be interpreted as an inverse limit of $\infty$-definable semigroups in $M^{eq}$.

As a result he shows that for every local type $p\in S_{G,\Delta}(M)$ there exists an $n\in\mathbb{N}$ such that $p^n$ is in a subgroup of $S_{G,\Delta}(M)$. In fact he shows that $p^n$ is equal to a translate of a $\Delta$-generic of a $\Delta$-definable connected subgroup of $G(M)$.

\begin{definition*}
A semigroup $S$ is called \emph{strongly $\pi$-regular} or an \emph{epigroup} if for all $a\in S$ there exists $n\in\mathbb{N}$ such that $a^n$ is in a subgroup of $S$.
\end{definition*}

\begin{question}
Is this property enjoyed by all $\infty$-definable semigroups in stable structures? 
\end{question}

Since we're dealing with  $\infty$-definable semigroups, remembering that every $\infty$-definable group in a stable structure is an intersection of definable groups, an analogous questions arises:
 \begin{question}
Is every $\infty$-definable semigroup in a stable structure an intersection of definable ones? Is $S_{G,\Delta}(M)$ an intersection of definable semigroups?
\end{question}

In this paper we answer both theses questions. 

It is a classical result about affine algebraic semigroups that they are strongly $\pi$-regular. Recently Brion and Renner \cite{RenBrio} proved that this is true for all Algebraic Semigroups. In fact, we'll show that 
\begin{propff}{T:inf-def-is-spr}
Let $S$ be an $\infty$-definable semigroup inside a stable structure. Then $S$ is strongly $\pi$-regular.
\end{propff}
At least in the definable case, this is a direct consequence of stability, the general case is not harder but a bit more technical.

One can ask if what happens in $S_{G,\Delta}(M)$ is true in general $\infty$-definable semigroups. That is, is every element a power away from a translation of an idempotent. However, this is already is not true in $M_2(\mathbb{C})$.

As for the second question, in Section \ref{sec:S_G(M)} we show that $S_{G,\Delta}(M)$ is an intersection of definable semigroups. In fact,

\begin{theoremff}{T:invlim}
$S_G(M)$ is an inverse limit of definable semigroups in $M^{eq}$.
\end{theoremff}

Unfortunately not all $\infty$-definable semigroups are an intersection of definable ones.

Milliet showed that every $\infty$-definable semigroup inside a small structure is an intersection of definable semigroups \cite{on_enveloping}. In particular this is true for $\omega$-stable structures, and so for instance in $ACF$. Already in the superstable case this is not true in general, see \Cref{E:counter-exam}.

However there are some classes of semigroups in which this does hold. We recall some basic definitions from semigroup theory we'll need. See \Cref{ss:semigroups} for more information.
\begin{definition*}
\begin{enumerate}
\item An element $e\in S$ in a semigroup $S$ is an \emph{idempotent} if $e^2=e$.
\item A semigroup $S$ is called an \emph{inverse semigroup} if for every $a\in S$ there exists a unique $a^{-1}\in S$ such that 
\[aa^{-1}a=a,\quad a^{-1}aa^{-1}=a^{-1}.\]
\item A \emph{Clifford semigroup} is an inverse semigroup in which the idempotents are central. A \emph{surjective Clifford monoid} is a Clifford monoid in which for every $a\in S$ there exists $g\in G$ and idempotent $e$ such that $a=ge$, where $G$ is the unit group of $S$.
\end{enumerate}
\end{definition*}

These kinds of semigroups do arise in the context of $S_G(M)$.
It is probably folklore, but one may show (see \Cref{ss:S_G inverse}) that if $G$ is $1$-based then $S_G(M)$ is an inverse monoid. In \Cref{ss:S_G inverse} we give a condition on $G$ for $S_G(M)$ to be Clifford.

\begin{theoremff}{T:Cliff_with_surj}
Let $S$ be an $\infty$-definable surjective Clifford monoid in a stable structure. Then $S$ is contained in a definable monoid, extending the multiplication on $S$. This monoid is also a surjective Clifford monoid.
\end{theoremff}
As a result from the proof, every such monoid is an intersection of definable ones.

In the process of proving the above Theorem we show two results which might be interesting in their own right.

Since $\infty$-definable semigroups in stable structures are s$\pi$r, one may define a partial order on them given by 
\[a\leq b \Leftrightarrow a=be=fb \text{ for some } e,f\in E(S^1),\]
where $S^1$ is $S\cup \{1\}$ where we define $1$ to be the identity element.
If for every $a,b\in S$, $a\cdot b\leq a,b$ one may show that there exists $n\in\mathbb{N}$ such that every product of $n+1$ elements is already a product of $n$ of them (\Cref{P:in negative_order_n+1_is_n}). As a result any such semigroup is an intersection of definable ones. In particular,

\begin{propff}{P:idemp_is_inside_definable}
Let $E$ be an $\infty$-definable commutative idempotent semigroup inside a stable structure, then $E$ is contained in a definable commutative idempotent semigroup. Furthermore, it is an intersection of definable ones.
\end{propff}

\section{Preliminaries}\label{S:Preliminaries}

\subsection{Notations}
We fix some notations.
We'll usually not distinguish between singletons and sequences thus we may write $a\in M$ and actually mean $a=(a_1,\dots,a_n)\in M^n$, unless a distinguishment is necessary. $A,B,C,\dots$ will denote parameter sets and $M,N,\dots$ will denote models. When talking specifically about semigroups, monoids and groups (either definable, $\infty$-definable or models) we'll denote them by $S$, $M$ and $G$, respectively. We use juxtaposition $ab$ for concatenation of sequences, or $AB$ for $A\cup B$ if dealing with sets. That being said, since we will be dealing with semigroups, when there is a chance of confusion we'll try to differentiate between the concatenation $ab$ and the semigroup multiplication $ab$ by denoting the latter by $a\cdot b$.

\subsection{Semigroups}\label{ss:semigroups}
Clifford and Preston \cite{Clifford1,Clifford2} is still a very good reference for the theory of semigroups, but Higgins \cite{Higgins} and Howie \cite{Howie} are much more recent sources.

A set $S$ with an associative binary operation is called a \emph{semigroup}.\\
An element $e\in S$ is an \emph{idempotent} if $e^2=e$. We'll denote by $E(S)$ the subset of all idempotents of $S$.\\ 
By a subgroup of $S$ we mean a subsemigroup $G\subseteq S$ such that there exists an idempotent $e\in G$ such that $(G,\cdot)$ is a group with neutral element $e$.\\
$S$ is \emph{strongly $\pi$-regular} (s$\pi$r) if for each $a\in S$ there exits $n>0$ such that $a^n$ lies in a subgroup of $S$.
\begin{remark}
These type of semigroups are also known as \emph{epigroups} and their elements as \emph{group-bound}.
\end{remark}

A semigroup with an identity element is called a \emph{monoid}. Notice that any semigroup can be extended to a monoid by artificially adding an identity element. We'll denote it by $S^1$. If $S$ is a monoid we'll denote by $G(S)$ its subgroup of invertible elements.\\
Given two semigroups $S,S^\prime$, a \emph{homomorphism of semigroups} is map \[\varphi :S\to S^\prime\] such that $\varphi(xy)=\varphi(x)\varphi(y)$ for all $x,y\in S$. If $S,S^\prime$ are monoids, then we say $\varphi$ is a \emph{homomorphism of monoids} if in addition $\varphi(1_S)=1_{S^\prime}$.\\

\begin{definition}
The \emph{natural partial order} on $E(S)$ is defined by \[e\leq f \Leftrightarrow ef=fe=e.\]
\end{definition}

\begin{proposition}\cite[Section 1.7]{Clifford1}\label{P:Max.subgroups}
For every $e,f\in E(S)$ we have the following:
\begin{enumerate}
\item $eSe$ is a subsemigroup of $S$. In fact, it is a monoid with identity element $e$;
\item $eSe\subseteq fSf \Leftrightarrow e\leq f;$
\item Every maximal subgroup of $S$ is of the form $G(eSe)$ (the unit group of $eSe$) for $e\in E(S)$;
\item If $e\neq f$ then $G(eSe)\cap G(fSf)=\emptyset$.
\end{enumerate}
\end{proposition}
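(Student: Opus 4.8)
The plan is to prove all four items by direct element-chasing inside $S$, using nothing beyond associativity, the idempotency of the elements of $E(S)$, and the definitions of the natural partial order and of a subgroup; I would take them in the stated order, since (3) and (4) lean on (1) and (2). For (1), I first check closure: for $ese,ete\in eSe$ one computes $(ese)(ete)=es(ee)te=e(set)e\in eSe$, using $ee=e$. Then $e=eee\in eSe$, since $e^2=e$ forces $e^3=e$, and for any $ese\in eSe$ one has $e(ese)=(ee)se=ese$ and $(ese)e=es(ee)=ese$, so $e$ is a two-sided identity and $eSe$ is a monoid with identity $e$.

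For (2), the backward direction uses $ef=fe=e$: given $ese\in eSe$ I rewrite $ese=(fe)s(ef)=f(ese)f\in fSf$. For the forward direction I apply the inclusion to the element $e=eee\in eSe\subseteq fSf$, obtaining $e=ftf$ for some $t\in S$; then $fe=(ff)tf=ftf=e$ and $ef=ft(ff)=ftf=e$, which is exactly $e\leq f$.

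For (3), the key preliminary observation is that the identity of any subgroup $H\subseteq S$ is idempotent, hence equal to some $e\in E(S)$. For $h\in H$ one has $eh=he=h$, so $h=ehe\in eSe$, and since the inverse of $h$ in $H$ also lies in $eSe$ with $hh^{-1}=h^{-1}h=e$, in fact $h\in G(eSe)$; thus $H\subseteq G(eSe)$. As $G(eSe)$ is itself a subgroup of $S$ (a group with identity $e$, by (1)), maximality of $H$ forces $H=G(eSe)$.

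For (4), suppose $x\in G(eSe)\cap G(fSf)$. Then $e$ and $f$ are both identities for $x$, so $ex=xe=fx=xf=x$, and there are $u\in eSe$ and $v\in fSf$ with $ux=e$ and $xv=f$. Computing $ef$ in two ways gives $ef=(ux)f=u(xf)=ux=e$ and $ef=e(xv)=(ex)v=xv=f$, whence $e=f$; contrapositively, $e\neq f$ forces the intersection to be empty. Every step is a short manipulation, so there is no serious obstacle; the only place demanding a small idea is (4), where one must pair the right identity with the right inverse so that the two evaluations of $ef$ collapse to $e$ and to $f$ respectively. I would present (1)–(2) first precisely so that the phrase ``subgroup of $S$'' in (3) and (4) can be read through $G(eSe)$, the unit group of the monoid produced in (1).
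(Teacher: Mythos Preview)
Your proof is correct in all four parts; each computation is valid and the logical structure is sound. Note, however, that the paper does not supply its own proof of this proposition: it is quoted from \cite[Section 1.7]{Clifford1} as background and left unproved, so there is nothing to compare your argument against beyond observing that what you wrote is the standard elementary verification one finds in Clifford--Preston.
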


There are various ways to extend the partial order on the idempotents to a partial order on the entire semigroup. See \cite[Section 1.4]{Higgins} for a discussion about them. We'll use the \emph{natural partial order} on $S$. It has various equivalent definitions, we present the one given in \cite[Proposition 1.4.3]{Higgins}.

\begin{definition}\label{D:general_def_of_order}
The relation 
\[a\leq b \Leftrightarrow a=xb=by,xa=a \text{ for some } x,y\in S^1\]
is called the \emph{natural partial order} on $S$.
\end{definition}
Notice that this extends the partial order on $E(S)$. If $S$ is s$\pi$r this partial order takes a more elegant form:

\begin{proposition}\cite[Corollary 1.4.6]{Higgins}
On s$\pi$r semigroups there is a natural partial order extending the order on $E(S)$:
\[a\leq b \Leftrightarrow a=be=fb \text{ for some } e,f\in E(S^1).\]
\end{proposition}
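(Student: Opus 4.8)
The plan is to show that the idempotent relation $a\le b\Leftrightarrow a=be=fb$ (for some $e,f\in E(S^1)$) coincides, on an s$\pi$r semigroup, with the natural partial order of \Cref{D:general_def_of_order}. Since the latter is already a partial order extending the order on $E(S)$, establishing this coincidence proves reflexivity, antisymmetry, transitivity and the extension property all at once. One inclusion is immediate and uses nothing about s$\pi$r: if $a=be=fb$ with $e,f$ idempotent, put $x=f$ and $y=e$; then $xb=fb=a$, $by=be=a$, and $xa=fa=f(fb)=fb=a$, so $a\le b$ in the sense of \Cref{D:general_def_of_order}.

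For the converse, suppose $a=xb=by$ with $xa=a$ and $x,y\in S^1$. I would first record the dual relation $ay=a$, which is automatic: $ay=(xb)y=x(by)=xa=a$. Thus the situation is symmetric, and it suffices to prove the one-sided statement, which I will call the Lemma: if $a=xb$ and $xa=a$ in an s$\pi$r semigroup, then $a=fb$ for some $f\in E(S^1)$. Applying the Lemma to $a=xb,\ xa=a$ produces an idempotent $f$ with $a=fb$; applying the same Lemma in the opposite semigroup $S^{\mathrm{op}}$ to $a=by,\ ay=a$ (idempotents and subgroups being preserved under passing to $S^{\mathrm{op}}$) produces an idempotent $e$ with $a=be$. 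Together these give $a=be=fb$, as required. The trivial case $x=1$ forces $a=b$ and is handled by $f=1\in E(S^1)$.

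The heart of the argument is the Lemma, and this is where s$\pi$r enters. Given $a=xb$ with $xa=a$ and $x\in S$, I iterate the two hypotheses to get $x^{n}a=a$ and $x^{n}b=a$ for every $n\ge 1$. By s$\pi$r, fix $n$ with $x^{n}$ lying in a subgroup $H\subseteq S$; let $e\in E(S)$ be its identity (cf.\ \Cref{P:Max.subgroups}) and $g=(x^{n})^{-1}\in H$ its inverse, so that $ex^{n}=x^{n}e=x^{n}$ and $gx^{n}=x^{n}g=e$. First, multiplying $x^{n}a=a$ on the left by $e$ and using $ex^{n}=x^{n}$ gives $ea=a$. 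Now compute, using $e=gx^{n}$ and then $x^{n}b=a$, $a=x^{n}a$, $gx^{n}=e$, $ea=a$ in turn,
\[eb=(gx^{n})b=g(x^{n}b)=ga=g(x^{n}a)=(gx^{n})a=ea=a.\]
Hence $f:=e$ is an idempotent with $a=fb$, proving the Lemma.

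The main obstacle is precisely the displayed chain, i.e.\ seeing that the group inverse $g$ of $x^{n}$ can be used to \emph{cancel} on both sides. What makes it work is that the single element $x^{n}$ simultaneously sends $b$ to $a$ (from $a=xb$) and fixes $a$ (from $xa=a$); s$\pi$r is exactly what guarantees a power $x^{n}$ sitting inside a genuine subgroup, thereby supplying the idempotent $e$ and the inverse $g$ needed to convert these two facts into the identity $eb=a$. Without group-boundedness there is in general no reason for an idempotent witness $f$ to exist, so this step cannot be purely formal; the remaining verifications (the easy inclusion, the reduction to the one-sided Lemma, and the dualisation through $S^{\mathrm{op}}$) are then routine.
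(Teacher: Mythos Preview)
The paper does not give its own proof of this proposition; it is quoted verbatim from \cite[Corollary 1.4.6]{Higgins} with no argument supplied. There is therefore nothing in the paper to compare against.

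Your proof is correct. The easy inclusion is fine, and in the hard inclusion your reduction to the one-sided Lemma via the observation $ay=a$ and dualisation through $S^{\mathrm{op}}$ is clean. The computation in the Lemma is also sound: from $x^{n}a=a$ and $ex^{n}=x^{n}$ one gets $ea=(ex^{n})a=x^{n}a=a$, and then $eb=(gx^{n})b=g(x^{n}b)=ga=g(x^{n}a)=(gx^{n})a=ea=a$. One small cosmetic point: when you invoke $S^{\mathrm{op}}$, it is worth saying explicitly that $S^{\mathrm{op}}$ is again s$\pi$r because subgroups of $S$ and $S^{\mathrm{op}}$ coincide as sets (with the same identity idempotent), so the Lemma applies there verbatim. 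Otherwise nothing is missing.
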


\subsubsection{Clifford and Inverse Semigroups}\label{subsec:Clifford}
\begin{definition}
A semigroup $S$ is called $regular$ if for every $a\in S$ there exists at least one element $b\in S$ such that 
\[aba=a,\quad bab=b.\]
Such an element $b$ is a called a \emph{pseudo-inverse} of $a$
\end{definition}

\begin{definition}
A semigroup $S$ is called an \emph{inverse semigroup} if for every $a\in S$ there exists a unique $a^{-1}\in S$ such that 
\[aa^{-1}a=a,\quad a^{-1}aa^{-1}=a^{-1}.\]
\end{definition}

Basic facts about inverse semigroups:
\begin{proposition}\cite[Section V.1, Theorem 1.2 and Proposition 1.4]{Howie}
Let $S$ be an inverse semigroup.
\begin{enumerate}
\item For every $a,b\in S$, $\left( a^{-1}\right)^{-1}=a$ and $\left( ab\right)^{-1}=b^{-1}a^{-1}.$
\item For every $a\in S$, $aa^{-1}$ and $a^{-1}a$ are idempotents.
\item The idempotents commute. Thus $E(S)$ is a commutative subsemigroup, and hence a semilattice.
\end{enumerate}
\end{proposition}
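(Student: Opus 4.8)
The plan is to derive all three items purely formally from the two defining identities $aa^{-1}a=a$ and $a^{-1}aa^{-1}=a^{-1}$ together with the \emph{uniqueness} of the inverse, arranging the steps so that each one feeds the next. I would begin with item (2), which uses the identities but not uniqueness: grouping $(aa^{-1})^2=(aa^{-1}a)a^{-1}=aa^{-1}$ shows $aa^{-1}\in E(S)$, and symmetrically $(a^{-1}a)^2=a^{-1}(aa^{-1}a)=a^{-1}a$.

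Next I would dispatch the first identity of item (1). The defining pair for $a$, read with the roles of $a$ and $a^{-1}$ interchanged, says precisely that $a$ is an inverse of $a^{-1}$; uniqueness then forces $(a^{-1})^{-1}=a$. As a corollary, every idempotent $e$ satisfies $eee=e$ (since $e^2=e$ gives $e^3=e$), so $e$ is its own inverse, i.e.\ $e^{-1}=e$.

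The heart of the argument, and the only genuinely delicate step, is item (3): that idempotents commute. I would first show that the product $ef$ of two idempotents is again idempotent. Setting $x=(ef)^{-1}$, a short check---collapsing words via $e^2=e$, $f^2=f$ and the two identities for $x$---verifies that $fxe$ is also an inverse of $ef$, so uniqueness gives $x=fxe$; then $x^2=fxe\,fxe=f(xefx)e=fxe=x$, so $x$ is idempotent. Since idempotents are self-inverse, $x=x^{-1}=((ef)^{-1})^{-1}=ef$, whence $ef\in E(S)$. With this in hand, both $ef$ and $fe$ are idempotent and hence self-inverse, and a parallel computation shows $fe$ is an inverse of $ef$; uniqueness then yields $fe=(ef)^{-1}=ef$.

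Finally, the second identity of item (1) follows by checking that $b^{-1}a^{-1}$ satisfies the two defining identities for $ab$. In $(ab)(b^{-1}a^{-1})(ab)$ the middle rearranges to $a(bb^{-1})(a^{-1}a)b$, and here I would invoke item (3): the idempotents $bb^{-1}$ and $a^{-1}a$ (from item (2)) commute, so this becomes $a(a^{-1}a)(bb^{-1})b=(aa^{-1}a)(bb^{-1}b)=ab$; the second identity is symmetric. Uniqueness then gives $(ab)^{-1}=b^{-1}a^{-1}$. The main obstacle throughout is the commutativity of idempotents: every other step is a mechanical collapse of words, whereas that one requires the slightly subtle use of uniqueness to recognize $(ef)^{-1}$ as idempotent.
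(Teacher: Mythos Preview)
Your proof is correct and is precisely the classical argument from Howie's book; the paper itself does not supply a proof of this proposition but merely cites \cite[Section V.1, Theorem 1.2 and Proposition 1.4]{Howie}, so there is nothing further to compare.
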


The basic example for inverse semigroups is the set, $\mathscr{I}(X)$, of partial one-to-one mappings for a set $X$, that means that the domain is a (possibly empty) subset of $X$. The composition of two "incompatible" mappings will be the empty mapping. The first surprising fact is that this is in fact an \emph{inverse} semigroup, but one can say even more (a generalization of Cayley's theorem for groups):

\begin{theorem}[The Vagner-Preston Representation Theorem]\cite[Section V.1, Theorem 1.10]{Howie}
If $S$ is an inverse semigroup then there exists a set $X$ and a monomorphism $\phi: S\to \mathscr{I}(X)$.
\end{theorem}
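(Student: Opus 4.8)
The plan is to imitate Cayley's theorem for groups, representing each element of $S$ by a partial right translation acting on $S$ itself. So I would take $X = S$, and for each $a \in S$ define a partial map $\rho_a$ on $S$ with domain $\operatorname{dom}(\rho_a) = Saa^{-1}$ and rule $x \mapsto xa$. Since $aa^{-1}$ is idempotent, this domain is exactly $\{x \in S : xaa^{-1} = x\}$, and a direct check shows $\rho_a$ is injective there with image $Sa = Sa^{-1}a$; in fact $\rho_{a^{-1}}$ is its two-sided inverse as a partial map, since $xaa^{-1} = x$ holds on the domain and $ya^{-1}a = y$ holds on the image. Hence each $\rho_a$ is a partial bijection of $S$, i.e. $\rho_a \in \mathscr{I}(S)$, and the assignment $\phi \colon a \mapsto \rho_a$ is the candidate monomorphism.

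The two things left to verify are that $\phi$ is a homomorphism and that it is injective. Adopting the convention that maps in $\mathscr{I}(X)$ act on the right and compose left-to-right, the rule of $\phi_a\phi_b$ is immediate wherever defined: $x(\rho_a\rho_b) = (xa)b = x(ab) = x\rho_{ab}$. The real content, and the step I expect to be the main obstacle, is matching the domains, that is, showing $\operatorname{dom}(\rho_a\rho_b) = \operatorname{dom}(\rho_{ab}) = S(ab)(ab)^{-1}$. This is exactly where the structure theory of inverse semigroups enters: using $(ab)^{-1} = b^{-1}a^{-1}$ and, crucially, the commutativity of the idempotents $a^{-1}a$ and $bb^{-1}$ from the Proposition quoted above, I would check that ``$x \in \operatorname{dom}(\rho_a)$ and $xa \in \operatorname{dom}(\rho_b)$'' holds if and only if $x = x\,abb^{-1}a^{-1}$, which is precisely membership in $S(ab)(ab)^{-1}$. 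This would give $\rho_a\rho_b = \rho_{ab}$, so $\phi$ is a homomorphism.

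Finally, for injectivity, suppose $\rho_a = \rho_b$. Equality of domains gives $Saa^{-1} = Sbb^{-1}$, and since a principal left ideal generated by an idempotent determines that idempotent in an inverse semigroup, this forces $aa^{-1} = bb^{-1}$; the needed lemma is the short argument that $Se = Sf$ yields $ef = e$ and $fe = f$, whence $e = f$ because idempotents commute. Now $aa^{-1}$ lies in the common domain, so evaluating both maps there and using $aa^{-1} = bb^{-1}$ gives $a = aa^{-1}a = aa^{-1}b = bb^{-1}b = b$. Thus $\phi$ is injective, and $\phi \colon S \to \mathscr{I}(S)$ is the desired monomorphism with $X = S$.
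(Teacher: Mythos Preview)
The paper does not actually supply its own proof of this theorem; it merely quotes the Vagner--Preston representation theorem from Howie as background in the preliminaries, so there is nothing in the paper to compare against. Your argument is correct and is precisely the classical Vagner--Preston construction (right translations $x\mapsto xa$ on the domains $Saa^{-1}$, with commutativity of idempotents used both to match domains in the homomorphism check and to deduce $aa^{-1}=bb^{-1}$ from $Saa^{-1}=Sbb^{-1}$ in the injectivity step), so it aligns with the standard reference the paper is citing.
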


If $S$ is an inverse semigroup, the partial order on $S$ gets the following form: $a\leq b$ if there exists $e\in E(S)$ such that $a=eb$. 

\begin{proposition}
\begin{enumerate}
\item $\leq$ is a partial order relation.
\item If $a,b,c\in S$ such that $a\leq b$ then $ac\leq bc$ and $ca\leq cb$. Futhermore, $a^{-1}\leq b^{-1}$.
\end{enumerate}
\end{proposition}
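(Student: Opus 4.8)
This is a routine fact about inverse semigroups, so the plan is to reduce everything to the observation that the relation in question has several interchangeable descriptions; once these are in hand, both parts are immediate. Thus the one step with any content is to prove that for $a,b$ in an inverse semigroup $S$ the following are equivalent: (i) $a=eb$ for some $e\in E(S)$; (ii) $a=bf$ for some $f\in E(S)$; (iii) $a=aa^{-1}b$; (iv) $a=ba^{-1}a$. I would prove these by direct manipulation, using only the recalled facts $(xy)^{-1}=y^{-1}x^{-1}$, $e^{-1}=e$ for $e\in E(S)$ (by uniqueness of inverses), that $E(S)$ is a commutative subsemigroup, and that $aa^{-1},a^{-1}a\in E(S)$. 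For instance (i)$\Rightarrow$(iii) because $aa^{-1}=eb(eb)^{-1}=e(bb^{-1})e=e\,bb^{-1}$, hence $aa^{-1}b=e\,bb^{-1}b=eb=a$; (iii)$\Rightarrow$(i) is trivial; (ii)$\Leftrightarrow$(iv) is symmetric; and (iii)$\Leftrightarrow$(iv) follows by first deriving $bb^{-1}a=a$ from (iii) (substitute $a=aa^{-1}b$ and commute idempotents) and then computing $ba^{-1}a=a$. It is also worth noting here, as the text does, that this relation is the natural partial order of \Cref{D:general_def_of_order} restricted to $S$, since (iii) and (iv) exhibit $a=xb=by$ with $x=aa^{-1}$, $y=a^{-1}a$ and $xa=a$.

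Granting the equivalences, part (1) drops out. Reflexivity: $a=(aa^{-1})a$, so $a\leq a$. Antisymmetry: if $a\leq b$ and $b\leq a$, use form (iii) for both to get $a=aa^{-1}b=aa^{-1}(bb^{-1}a)=(aa^{-1})(bb^{-1})a=(bb^{-1})(aa^{-1}a)=bb^{-1}a=b$. Transitivity: if $a=eb$ and $b=fc$ with $e,f\in E(S)$, then $a=(ef)c$ and $ef\in E(S)$, so $a\leq c$. For part (2): if $a=eb$ with $e\in E(S)$ then $ac=e(bc)$, so $ac\leq bc$; if instead $a=bf$ with $f\in E(S)$ (form (ii)) then $ca=(cb)f$, so $ca\leq cb$; and since $a^{-1}=(eb)^{-1}=b^{-1}e^{-1}=b^{-1}e$ with $e\in E(S)$, form (ii) applied to the pair $(a^{-1},b^{-1})$ gives $a^{-1}\leq b^{-1}$.

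There is essentially no obstacle: the proof is bookkeeping with idempotents. The only thing to be careful about is which of the four equivalent forms to invoke at each point — the ``left'' form (i) is what makes right multiplication monotone, the ``right'' form (ii) is what makes left multiplication and inversion monotone, and forms (iii)/(iv) are the convenient ones for reflexivity and antisymmetry — together with keeping track of which products are idempotent, which is exactly what the recalled properties of inverse semigroups guarantee.
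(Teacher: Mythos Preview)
Your argument is correct and is precisely the standard route for this fact in inverse semigroups. However, the paper does not actually supply a proof of this proposition: it is stated without proof in the preliminaries as a well-known property, with the ambient references (Howie, Higgins, Clifford--Preston) implicitly serving as the source. So there is no ``paper's own proof'' to compare against; your write-up simply fills in what the paper omits, and it does so in the expected way --- establishing the equivalent descriptions $a=eb \Leftrightarrow a=bf \Leftrightarrow a=aa^{-1}b \Leftrightarrow a=ba^{-1}a$ and reading off reflexivity, antisymmetry, transitivity, and compatibility with multiplication and inversion.
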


\begin{definition}
A \emph{Clifford semigroup} is an inverse semigroup in which the idempotents are central. 
\end{definition}

\begin{remark}
Different sources give different, but equivalent, definitions of a Clifford semigroup. For instance Howie defines a Clifford semigroup to be a regular semigroup $S$ in which the idempotents are central \cite[Section IV.2]{Howie}. One may show that $S$ is an inverse semigroup if and only if it is regular and the idempotents commute \cite[Section V.1, Theorem 1.2]{Howie}, so the definitions coincide.
\end{remark}

The following is well known, but we'll add a proof instead of adding another source.
\begin{proposition}
$S$ is a Clifford semigroup if and only if it is an inverse semigroup and $aa^{-1}=a^{-1}a$ for all $a\in S$. 
\end{proposition}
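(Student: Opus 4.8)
The plan is to prove both directions of the equivalence. The forward direction is essentially the remark preceding the statement: if $S$ is Clifford, the idempotents are central, so in particular $aa^{-1}$ and $a^{-1}a$ — which are idempotents by the basic facts on inverse semigroups — commute with everything, and I need to show they are actually equal. For this I would compute $aa^{-1} = a(a^{-1}aa^{-1}) = (aa^{-1})(aa^{-1})$... more directly, using centrality: $aa^{-1} = (a^{-1}a)(aa^{-1}) = (aa^{-1})(a^{-1}a) = a^{-1}a$, where the middle equality is centrality of the idempotent $a^{-1}a$ (or of $aa^{-1}$) and the outer equalities use $aa^{-1}a = a$ and $a^{-1}aa^{-1}=a^{-1}$ to collapse the products. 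So $aa^{-1}=a^{-1}a$ for all $a$.

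For the converse, suppose $S$ is an inverse semigroup with $aa^{-1}=a^{-1}a$ for all $a\in S$; I must show every idempotent is central. Let $e\in E(S)$ and $a\in S$ be arbitrary. The key observation is that an idempotent is its own inverse: from $eee=e$ and the uniqueness of the inverse, $e^{-1}=e$, so the hypothesis applied to $e$ gives nothing new, but it will be applied to cleverly chosen elements. I would consider the element $ea$: by the first basic proposition, $(ea)^{-1} = a^{-1}e^{-1} = a^{-1}e$, and then $(ea)(ea)^{-1} = ea a^{-1} e$ while $(ea)^{-1}(ea) = a^{-1}e\,ea = a^{-1}ea$; the hypothesis forces $eaa^{-1}e = a^{-1}ea$. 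Similarly, working with $ae$ gives $ae a^{-1} \cdot (\text{something}) $, and I would combine these identities, together with the fact that all idempotents commute (a standing fact in inverse semigroups) and that products like $eaa^{-1}$, $a^{-1}ae$ are manageable, to derive $ea = ae$. The cleanest route is probably: show first that $ae = (ae)(ae)^{-1}(ae)$ can be rewritten using $(ae)^{-1}=e a^{-1}$ and commutativity of the idempotents $aa^{-1}, a^{-1}a, e$ until both $ea$ and $ae$ are expressed as the same word in $a$, $a^{-1}$ and $e$.

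The main obstacle is purely bookkeeping in the converse direction: getting from the equations $aa^{-1}=a^{-1}a$ (for all $a$) and ``idempotents commute'' to full centrality of idempotents requires choosing the right auxiliary elements to feed into the hypothesis and then chaining the relations in the correct order, without ever needing associativity gymnastics that go beyond the standard inverse-semigroup identities in the proposition quoted above. No stability or model theory enters here; this is a self-contained semigroup computation, and once the identities $eaa^{-1}e = a^{-1}ea$ and its mirror are in hand the conclusion $ea = ae$ falls out by multiplying on appropriate sides by $a$ or $a^{-1}$ and using $aa^{-1}a=a$.
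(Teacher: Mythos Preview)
Your proposal is correct. The forward direction is exactly the paper's computation: use centrality of $a^{-1}a$ (and $aa^{-1}$) to slide factors past each other and collapse with $aa^{-1}a=a$, $a^{-1}aa^{-1}=a^{-1}$.

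For the converse you take a slightly different route from the paper. The paper argues by \emph{uniqueness of pseudo-inverses}: it verifies $(ea)(ea^{-1})(ea)=ea$ (a short chain of rewrites using the hypothesis and commutativity of idempotents), so $ea^{-1}$ is a pseudo-inverse of $ea$; since $(ea)^{-1}=a^{-1}e$ is the unique one, $ea^{-1}=a^{-1}e$, and replacing $a$ by $a^{-1}$ gives $ea=ae$. Your idea instead applies the hypothesis $xx^{-1}=x^{-1}x$ directly to $x=ea$, obtaining $e(aa^{-1})e=a^{-1}ea$, and then finishes by elementary manipulation. Your sketch is a little vague at the last step, but it does go through cleanly: since idempotents commute, $e(aa^{-1})e=e(aa^{-1})$, so $e(aa^{-1})=a^{-1}ea$; multiplying on the left by $a$ gives $a\cdot e\cdot(aa^{-1})=(aa^{-1})ea$; on the right this is $e(aa^{-1})a=ea$, and on the left $a(aa^{-1})e=a(a^{-1}a)e=ae$ (using the hypothesis on $a$ and $aa^{-1}a=a$), hence $ae=ea$. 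This is arguably shorter than the paper's argument, since it avoids invoking uniqueness of the inverse; the paper's version, on the other hand, makes the role of the inverse-semigroup axiom more visible.
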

\begin{proof}
Assume $S$ is a Clifford semigroup and let $a\in S$. Since $aa^{-1}$ and $a^{-1}a$ are idempotents and central, 
\[aa^{-1}=a(a^{-1}a)a^{-1}=(a^{-1}a)aa^{-1}=a^{-1}a(aa^{-1})=a^{-1}(aa^{-1})a=a^{-1}a.\]
Conversely, we must show that the idempotents are central. For $a\in S$ and $e\in E(S)$, we'll show that $ea=(ea)(a^{-1}e)(ea)=(ea)(ea^{-1})(ea)$ and thus by the uniqueness of the pseudo-inverses, $ea=ae$. By our assumption
\[(ea)(ea^{-1})(ea)=eae(a^{-1}e)(ea)=eae(ea)(a^{-1}e)=eaeaa^{-1}e\]
Again, $(ea)(a^{-1}e)=(a^{-1}e)(ea)$ so
\[=eaa^{-1}eea=eaa^{-1}ea,\]
and by the commutativity of the idempotents ($e$ and $aa^{-1}$),
\[=eaa^{-1}a=ea.\]
\end{proof}

\begin{definition}\cite[Chapter IV]{Howie}
A semigroup $S$ is said to be a \emph{strong semilattice of semigroups} if there exists a semilattice $Y$, disjoint subsemigroups $\{S_{\alpha}:\alpha\in Y\}$ and homomorphisms $\{\phi_{\alpha,\beta}:S_\alpha\to S_\beta:\alpha,\beta\in Y, \alpha\geq \beta\}$ such that
\begin{enumerate}
\item $S=\bigcup_\alpha S_\alpha$.
\item $\phi_{\alpha,\alpha}$ is the identity.
\item For every $\alpha\geq \beta\geq\gamma$ in $Y$, $\phi_{\beta,\gamma}\phi_{\alpha,\beta}=\phi_{\alpha,\gamma}$.
\end{enumerate}
\end{definition}

\begin{theorem}\cite[Section IV.2, Theorem 2.1]{Howie}
$S$ is a Clifford semigroup if and only if it is a strong semilattice of groups. The semilattice is $E(S)$, the disjoint groups are \[\{G_e=G(eSe): e\in E(S)\}\] the maximal subgroups of $S$ and the homomorphism $\phi_{e,ef}$ is given by multiplication by $f$.
\end{theorem}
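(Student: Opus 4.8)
The plan is to prove both implications directly, leaning on the structural facts already recorded: in an inverse semigroup the idempotents commute and form a semilattice; in a Clifford semigroup $aa^{-1}=a^{-1}a$ for every $a$; and \Cref{P:Max.subgroups} describes the maximal subgroups $G_e=G(eSe)$ and asserts that $G_e\cap G_f=\emptyset$ for $e\neq f$. Throughout I would isolate centrality of the idempotents at the outset, as essentially every nontrivial step is a rewrite using it.

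For the forward direction, assume $S$ is Clifford. I would take $Y=E(S)$ with its semilattice structure (meet $e\wedge f=ef$), which is legitimate since $S$ is inverse. For each $a\in S$ set $e_a:=aa^{-1}=a^{-1}a$; from $e_a a=a e_a=a$, $aa^{-1}=a^{-1}a=e_a$, and $a^{-1}\in e_aSe_a$ I would verify $a\in G(e_aSe_a)=G_{e_a}$, so $S=\bigcup_e G_e$, the union being disjoint by \Cref{P:Max.subgroups}(4). The heart is the transition maps: for $f\leq e$ define $\phi_{e,f}(a)=af$ (which is the statement's ``multiplication by $f$'' into $G_{ef}$ when $f$ is replaced by a general idempotent). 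Using that $f$ is central I would check (i) $af\in G_f$, by computing $(af)(af)^{-1}=af\,f a^{-1}=afa^{-1}=aa^{-1}f=ef=f$ and likewise $(af)^{-1}(af)=f$ and $f(af)f=af$; (ii) $\phi_{e,f}$ is a homomorphism, since $af\cdot bf=ab\,f$ as $f$ is central; (iii) $\phi_{e,e}=\mathrm{id}$; and (iv) the cocycle condition $\phi_{f,g}\phi_{e,f}=\phi_{e,g}$, which collapses to $fg=g$. Finally I would reconcile the two multiplications: for $a\in G_e$, $b\in G_f$, both the strong–semilattice product $\phi_{e,ef}(a)\,\phi_{f,ef}(b)=(af)(be)$ and the original product $ab=(ae)(fb)$ reduce, after pushing the central idempotents $e,f$ through, to the common value $aefb$, so they agree.

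For the converse, assume $S$ is a strong semilattice of groups $\{G_\alpha\}$ with identities $e_\alpha$ and transition homomorphisms $\phi_{\alpha,\beta}$, and recall that the product of $a\in G_\alpha$ and $b\in G_\beta$ is $\phi_{\alpha,\alpha\beta}(a)\,\phi_{\beta,\alpha\beta}(b)$ in $G_{\alpha\beta}$. An idempotent lying in $G_\alpha$ is idempotent in the group $G_\alpha$, hence equals $e_\alpha$, so $E(S)=\{e_\alpha:\alpha\in Y\}$. Since each $\phi$ sends identity to identity, I would compute $e_\alpha b=\phi_{\beta,\alpha\beta}(b)=b e_\alpha$ to get that idempotents are central, and $e_\alpha e_\beta=e_{\alpha\beta}=e_\beta e_\alpha$ to get that they commute. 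Group inverses furnish pseudo-inverses, so $S$ is regular; being regular with commuting idempotents it is inverse by the equivalence cited in the remark above, and with central idempotents it is Clifford.

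The main obstacle, and the only place genuine care is needed, is the bookkeeping in the forward direction: confirming that $\phi_{e,f}$ really maps into $G_f$ and is multiplicative, and that the reconstructed product matches the given one. Each of these rests on centrality of idempotents, so once that property is fixed the remaining associativity and cocycle verifications are routine rewrites; the converse is comparatively short once the identification $E(S)=\{e_\alpha\}$ and the product formula are in hand.
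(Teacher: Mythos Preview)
The paper does not supply its own proof of this theorem: it is quoted in the preliminaries with a citation to Howie \cite[Section IV.2, Theorem 2.1]{Howie} and used later as a black box, so there is no in-paper argument to compare your proposal against.

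That said, your argument is correct and is essentially the standard one found in Howie. In the forward direction you correctly exploit centrality of idempotents at every step: the only spot to watch is the verification that the strong-semilattice product agrees with the original one, and your computation $(af)(be)=a(ef)b=(ae)(fb)=ab$ handles it. In the converse, the identification $E(S)=\{e_\alpha\}$ via the observation that an idempotent inside a group equals its identity, together with $\phi_{\alpha,\alpha\beta}(e_\alpha)=e_{\alpha\beta}$, gives centrality cleanly; regularity plus commuting idempotents then yields inverse, hence Clifford. Nothing is missing.
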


\section{$\infty$-definable Semigroups and Monoids}\label{S:big-wedge-defin}

Let $S$ be an $\infty$-definable semigroup in a stable structure. Assume that $S$ is defined by \[\bigwedge_i \varphi_i(x).\]

\begin{remark}
We assume that $S$ is defined over $\emptyset$ just for notational convenience. Moreover we assume that the $\varphi_i$s are closed under finite conjunctions.
\end{remark}

\subsection{Strongly $\pi$-regular}

Our goal is to prove that an $\infty$-definable semigroup inside a stable structure is s$\pi$r. To better understand what's going on we start with an easier case:

\begin{definition}
A \emph{stable semigroup} is a stable structure $S$ such that there is a definable binary function $\cdot$ which makes $(S,\cdot)$ into a semigroup. 
\end{definition}

The following was already noticed in \cite{LoseyHans} for semigroups with chain conditions, but we give it in a "stable semigroup" setting.

\begin{proposition}\label{P:idem} 
Any stable semigroup has an idempotent.
\end{proposition}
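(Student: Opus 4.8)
The plan is to pick any element $a \in S$ and look at the sequence of its powers $a, a^2, a^3, \ldots$ inside the semigroup. Consider the definable sets $a^n S = \{a^n \cdot s : s \in S\}$ (principal left ideals generated by the powers). These form a descending chain $aS \supseteq a^2 S \supseteq a^3 S \supseteq \cdots$ of definable subsets of $S$. Stability gives us a strong structural constraint here: in a stable structure there is no definable partial order with infinite chains in a single formula — more precisely, the formula $\varphi(x;y)$ expressing "$x \in yS$" (which is $\exists s\, (x = y \cdot s)$, a definable relation since $\cdot$ is definable) cannot have the order property, so a uniformly definable descending chain must stabilize. Hence there is some $n$ with $a^n S = a^{n+1} S$.

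Once the chain stabilizes at $a^n$, set $b = a^n$, so $bS = b^2 S$. In particular $b \in bS = b^2S = \cdots = b^kS$ for all $k$, and more usefully $b \in b \cdot (bS)$, i.e. $b = b^2 c$ for some $c \in S$. Iterating, $b = b^{k+1} c^k$ for all $k \geq 1$; in fact I want to extract an idempotent from the "eventual image" $b^n S$. The cleanest route: since $b \in bS = b^2 S$, the map $x \mapsto bx$ is surjective from $bS$ onto $bS$... actually let me instead argue as follows. From $b = b^2 c$ we get, by induction, $b = b^{k} c^{k-1}$ for all $k \ge 1$. Now consider whether $bc$ is idempotent: $(bc)(bc) = b(cb)c$, which need not simplify directly, so I should be more careful.

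The safe approach is the classical semigroup argument combined with stability twice. First, as above, stability of the descending chain $a^n S$ yields an element $b$ with $b \in b^2 S$, so $b = b^2 c$. Replacing $a$ by $b$ we may assume $a = a^2 c$ for some $c$. Now look at the descending chain of definable sets $a^n c^n S$ or rather consider the element $e := a c$ together with its powers; alternatively, use that $b = b^2 c$ implies $b = b \cdot (bc)$ and $bc \in bS$, and then within the monoid $bS \cup \{1\}$ one runs the standard argument: the right translations by powers of $bc$ on the definable set $bS$ stabilize (stability again), giving an idempotent in $bS$. Concretely: from $b = b^2c$, set $f = bc$; then $bf = b$, and $f = bc \in bS$, so $f = bf' $ for... hmm. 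Actually the textbook fact is that if $b = b^2c$ then $bc$ and $cb$ are related to idempotents: $(cb)(cb) \cdots$. Let me just invoke the genuinely robust fact: in any semigroup, if $a = a^2 c$ then $e = \lim$-type argument — no. The honest statement: $b = b^2 c$ does NOT by itself give an idempotent in an arbitrary semigroup, so I must use stability a second time on the sequence of powers of a single well-chosen element.

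So here is the final shape of the argument. Fix $a \in S$. Apply stability to the uniformly definable descending chain $aS \supseteq a^2 S \supseteq \cdots$ to get $N$ with $a^N S = a^{N+1}S = a^{2N}S$. Then $a^N \in a^N S = a^N (a^N S)$, so $a^N = a^N \cdot d$ for some $d \in a^N S$, say $d = a^N d'$; thus $a^N$ acts as a left identity on the nonempty definable set $T := a^N S$, and $T$ is a definable subsemigroup (since $a^N S \cdot a^N S \subseteq a^N S$, as $a^N S = a^{2N}S$) containing an element, namely $a^N$ itself lies in... we need $a^N \in T$: indeed $a^N \in a^N S$ requires $a^N = a^N s$ for some $s$, which holds because $a^N \in a^N S = a^{N}\cdot a^N S$ gives $a^N = a^N (a^N s')$. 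Good: $T$ is a nonempty definable semigroup with a left identity $u := a^N$. Now within $T$ run the analogous argument on the right: consider $u S u$-type sets, or simply note $u = u \cdot t$ for all ... Actually, having a left identity $u$, consider the chain $Tu \supseteq Tu^2 \cdots$? Since $u t = t$ is false in general (left identity acts on left). The point where it closes: in $T$, pick $u = a^N$; we have $u = u d$ with $d\in T$. Look at right translations: $d, d u, d u^2=d u, \ldots$ — since $u$ is a left identity, $u^k = u$, so this degenerates. Instead: since $u = u d$, we get $u = u d = u d^2 = \cdots = u d^k$; now apply stability to the chain $Sd \supseteq Sd^2 \supseteq \cdots$ inside $T$, obtaining $M$ with $Sd^M = Sd^{M+1}$, hence $d^M = e d^M = d^{M} $ ... and one shows $d^M u$ or $u d^M$-type element, finally $d^{M}$ composed appropriately is idempotent.

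The main obstacle, to be candid in the writeup, is bookkeeping: getting from "a uniformly definable descending chain stabilizes" (the one genuinely stability-theoretic input, via the non-existence of the order property for the formula $\exists s\,(x = y\cdot s)$) to an actual idempotent requires the standard semigroup lemma that a semigroup satisfying the descending chain condition on principal left ideals has an idempotent. So the cleanest exposition is: (1) observe $\varphi(x;y) \equiv \exists s\,(x=y\cdot s)$ is a stable formula, so there is no infinite descending $\varphi$-chain, i.e. $S$ has DCC on principal left ideals of the form $a^n S$; (2) cite/reprove the elementary semigroup fact (this is essentially the content of \cite{LoseyHans}) that DCC on powers $a^n S$ forces some $a^n S$ to be a subsemigroup with a left identity, and then iterating the same idea on the other side inside that subsemigroup produces an idempotent — equivalently, any semigroup in which every element is "group-bound in the weak sense" coming from the stabilized chain contains an idempotent because a nonempty subsemigroup with both a stabilized left-ideal chain and right-ideal chain is forced to contain $e$ with $e^2 = e$. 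I expect step (2), the purely algebraic extraction, to be where all the care goes, while step (1) is a one-line appeal to stability.
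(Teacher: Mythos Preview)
Your overall strategy is sound and is essentially the chain-condition route the paper attributes to \cite{LoseyHans}: stability forces the uniformly definable chain $aS \supseteq a^2S \supseteq \cdots$ to stabilize, and an algebraic argument then extracts an idempotent. But the execution has real gaps. The claim that $a^N \in a^NS$ is circular as you wrote it (you assume it in order to derive it); what stabilization actually gives is $a^{N+1} \in a^NS = a^{2N+2}S$, so $b := a^{N+1}$ satisfies $b = b^2c$ for some $c \in S$. The assertion that $a^N$ is a left identity for $T = a^NS$ is likewise unjustified. One-sided stabilization alone does not hand you an idempotent; you must also stabilize $Sa^n$, obtain (after passing to a common high power, still called $b$) some $d$ with $b = db^2$, and then check that $e := bc = db$ is idempotent via $e^2 = (db)(bc) = d(b^2c) = db = e$. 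Without this two-sided step, or an honest appeal to the lemma you cite, the argument is incomplete.

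The paper takes a genuinely different and shorter route. Rather than two chain arguments, it applies the failure of the order property once, to the formula $\theta(x,y) = \exists u\,(ua = au \wedge ux = y)$, whose witness is forced to commute with $a$. Feeding in the sequence $(a^{3^m})_m$ (base $3$ ensures $3^n > 2\cdot 3^m$ whenever $n > m$) yields some $C$ commuting with $a$ and $m<n$ with $Ca^{3^n} = a^{3^m}$; a two-line computation then shows $Ca^{3^n - 3^m}$ is idempotent. The commutativity baked into $\theta$ is precisely what collapses your left/right bookkeeping into a single step, and it is also what makes the subsequent s$\pi$r and uniform-bound arguments in the paper go through cleanly.
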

\begin{proof}
Let $a\in S$ and let \[\theta(x,y)= \exists u (u\cdot a=a\cdot u\wedge u\cdot x=y).\] Obviously, $S\models \theta(a^{3^m},a^{3^n})$ for $m<n$. $S$ is stable hence $\theta$ doesn't have the order property. Thus there exists $m<n$ such that $S\models \theta(a^{3^n},a^{3^m})$. Let $C\in S$ be such that $C\cdot a^{3^n}=a^{3^m}$ and commutes with $a$.

Since $3^n> 2\cdot3^m$ then multiplying by $a^{3^n-2\cdot 3^m}$ yields $Ca^{2(3^n-3^m)}=a^{3^n-3^m}$. Notice that since $C$ commutes with $a$, $Ca^{3^n-3^m}$ is an idempotent.
\end{proof}

\begin{proposition}\label{P:s pi r}
Any stable semigroup is $s\pi r$.
\end{proposition}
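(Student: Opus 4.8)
The plan is to re-use, and push a little further, the construction from the proof of \Cref{P:idem}. Fix $a\in S$. Exactly as there, stability of the formula $\theta(x,y)$ yields $m<n$ and an element $C\in S$ with $Ca=aC$ and $Ca^{3^n}=a^{3^m}$; putting $d:=3^n-3^m\ge 1$ and multiplying both sides by $a^{3^n-2\cdot 3^m}$ (legal since $3^n>2\cdot 3^m$) gives
\[ Ca^{2d}=a^{d},\qquad Ca=aC, \]
and, as observed in \Cref{P:idem}, $e:=Ca^{d}$ is idempotent, since $(Ca^{d})^2=C(Ca^{2d})=Ca^{d}$. The goal is to show that this fixed $a$ has a power --- namely $a^{d}$ --- lying in the maximal subgroup $G(eSe)$; since $a$ is arbitrary this gives s$\pi$r.

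Two small verifications do it. First, $a^{d}\in eSe$: we have $ea^{d}=Ca^{2d}=a^{d}$, and, using $Ca=aC$, $a^{d}e=a^{d}Ca^{d}=Ca^{2d}=a^{d}$, hence $ea^{d}e=a^{d}$. Second, $a^{d}$ already has a two-sided inverse relative to $e$ \emph{inside $S$}, namely $C$ itself: $a^{d}C=Ca^{d}=e$. It then remains to transport this inverse into the monoid $eSe$, which is the purely semigroup-theoretic fact that I would isolate as a one-line lemma: if $e=e^{2}$, $x\in eSe$ and $xy=yx=e$ for some $y\in S$, then $z:=eye\in eSe$ and $xz=(xe)(ye)=x(ye)=(xy)e=e$, $zx=(ey)(ex)=e(yx)=e$, so $x$ is a unit of $eSe$. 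Applying it with $x=a^{d}$, $y=C$ yields $a^{d}\in G(eSe)\subseteq S$, a subgroup.

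I do not expect a genuine obstacle here: the content beyond \Cref{P:idem} is just the observation that the \emph{same} stability input that produces an idempotent (the element $C$ commuting with $a$ and satisfying $Ca^{2d}=a^{d}$) in fact already confines a power of $a$ to a subgroup, plus the trivial transport of a two-sided inverse from $S$ into $eSe$. The only thing to be careful about is the index bookkeeping ($d\ge 1$, $3^n>2\cdot 3^m$), which is already handled in \Cref{P:idem}.

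For a cross-check, here is a route that does not mention the specific formula $\theta$. The principal left ideals descend, $S^{1}a\supseteq S^{1}a^{2}\supseteq\cdots$, and these are the instances of a single formula $\rho(x;y):=(x=y)\vee\exists u\,(x=u\cdot y)$; a strictly descending infinite chain of such instances would, on choosing $c_{k}\in S^{1}a^{k}\setminus S^{1}a^{k+1}$, give $\models\rho(c_{k};a^{j})\Leftrightarrow j\le k$, i.e.\ the order property, contradicting stability. So this chain stabilizes, as does the symmetric chain $aS^{1}\supseteq a^{2}S^{1}\supseteq\cdots$; for $N$ large enough $S^{1}a^{N}=S^{1}a^{2N}$ and $a^{N}S^{1}=a^{2N}S^{1}$, i.e.\ $a^{N}\mathrel{\mathcal H}a^{2N}=(a^{N})^{2}$, so the $\mathcal H$-class of $a^{N}$ contains a product of two of its members and is therefore a subgroup of $S$ (the standard structure theorem for $\mathcal H$-classes, see \cite{Howie}). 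Either way some power of $a$ sits in a subgroup, so $S$ is s$\pi$r.
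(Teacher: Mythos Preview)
Your primary argument is correct and essentially identical to the paper's own proof: the paper also re-uses the element $C$ from \Cref{P:idem}, sets $e:=Ca^{n}$ (your $d$ is the paper's $n$), verifies $a^{n}=ea^{n}e$, and exhibits $eCe$ as the inverse of $a^{n}$ in $eSe$ --- the paper just says this in one line whereas you spell out the transport lemma. Your alternative cross-check via stabilization of the chains $S^{1}a^{k}$, $a^{k}S^{1}$ and the $\mathcal H$-class structure theorem is a genuinely different route the paper does not take; it replaces the explicit witness $C$ by an appeal to Green's relations, which is cleaner conceptually but imports more semigroup theory.
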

\begin{proof}
Let $a\in S$. From the proof of \Cref{P:idem} there exists $C\in S$ that commutes with $a$ and $n>0$ such that $Ca^{2n}=a^n$. Set $e:=Ca^n$. Indeed, $a^n=e\cdot a^n\cdot e$ and $a^n\cdot eCe=e$.
\end{proof}
\begin{remark}
Given $a\in S$, there exists a unique idempotent $e=e_a\in S$ such that $a^n$ belongs to the unit group of $eSe$ for some $n>0$. Indeed, for two idempotents $e\neq f$ the unit groups of $eSe$ and $fSf$ are disjoint (Proposition \ref{P:Max.subgroups}). 
\end{remark}

Furthermore, we have
\begin{lemma}\cite{Munn}\label{L:higher-power-in-max-subgrp}
Let $S$ be a semigroup and $x\in S$. If for some $n$, $x^n$ lies in a subgroup of $S$ with identity $e$ then $x^m$ lies in the unit group of $eSe$ for all $m\geq n$.
\end{lemma}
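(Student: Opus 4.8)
The plan is to reduce everything to \Cref{P:Max.subgroups} and some elementary monoid bookkeeping. First note that any subgroup of $S$ whose identity is $e$ is automatically contained in $G(eSe)$: if $h$ lies in such a subgroup then $eh=he=h$, so $h\in eSe$, and its inverse in that subgroup witnesses that $h$ is a unit of $eSe$. Hence the hypothesis gives $x^n\in G(eSe)$, and it remains to prove $x^m\in G(eSe)$ for all $m\geq n$. The one fact about $e$ that we need is its absorption behaviour: from $x^n=ex^ne$ we get $ex^n=x^n=x^ne$, and then for $m\geq n$, writing $m=n+r$ and using $x^m=x^rx^n=x^nx^r$, the same identities propagate to $ex^m=x^m=x^me$; in particular $x^m=ex^me\in eSe$. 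So every $x^m$ with $m\geq n$ already lies in the monoid $eSe$, and all that is left is to see it is a \emph{unit} of that monoid.

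For this, fix $m\geq n$ and choose $k$ with $kn-m\geq 1$ (e.g.\ $k=m+1$). Then $x^{kn}=(x^n)^k$ is a product of elements of the group $G(eSe)$, hence a unit of $eSe$; write $y$ for its inverse there, so $x^{kn}y=yx^{kn}=e$. From $x^m\cdot x^{kn-m}=x^{kn}=x^{kn-m}\cdot x^m$ the natural candidates for one-sided inverses of $x^m$ are $x^{kn-m}y$ on the right and $yx^{kn-m}$ on the left. These need not lie in $eSe$, so I truncate them to $w:=e\,x^{kn-m}y\,e$ and $v:=e\,y\,x^{kn-m}\,e$; using $x^me=x^m$, $ex^m=x^m$ and $x^{kn}y=yx^{kn}=e$ one checks directly that $x^m w=e=v\,x^m$ in $eSe$. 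An element of a monoid with a left inverse and a right inverse is invertible (and the two agree), so $x^m\in G(eSe)$.

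The whole thing is routine; the only point that needs a moment's care is that the obvious inverse candidates $x^{kn-m}y$ and $yx^{kn-m}$ are not known in advance to sit inside $eSe$, so one must conjugate them down by $e$ before the absorption identities can be applied, and one should check that the exponent inequalities $kn-m\geq 1$ and $kn\geq n$ survive the degenerate case $n=1$ (where $x=x^n\in G(eSe)$ and the statement is trivial anyway). One could alternatively induct on $m$ using $x^{m+1}\cdot x^{m-1}=x^{2m}\in G(eSe)$, but passing straight to a large common multiple of $n$ removes the need for induction.
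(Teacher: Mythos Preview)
Your proof is correct. Note, however, that the paper does not supply its own proof of this lemma: it is quoted from \cite{Munn} and followed immediately by \Cref{C:universal n}, so there is nothing in the paper to compare your argument against. What you have written is essentially the standard argument: embed the given subgroup into the maximal subgroup $G(eSe)$ via \Cref{P:Max.subgroups}, use the absorption identities $ex^n=x^n=x^ne$ to place every $x^m$ with $m\geq n$ inside the monoid $eSe$, and then produce one-sided inverses by passing to a large power $x^{kn}=(x^n)^k\in G(eSe)$ and sandwiching the naive inverse candidates by $e$. One cosmetic remark: when you write $x^m=x^rx^n$ with $r=m-n$, the case $r=0$ tacitly invokes $x^0$, which is undefined in a bare semigroup; but that case is exactly $m=n$, already covered by the hypothesis, so nothing is lost.
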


\begin{corollary}\label{C:universal n}
There exists $n>0$ (depending only on $S$) such that for all $a\in S$, $a^n$ belongs to the unit group of $e_aSe_a$.
\end{corollary}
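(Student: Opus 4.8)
The plan is to obtain $n$ by revisiting the proofs of \Cref{P:idem} and \Cref{P:s pi r} and extracting from them a bound that is \emph{uniform} in the element $a$. The only non‑uniform ingredient in those proofs is the appeal to the order property for the auxiliary formula $\theta$; I would replace it by the same statement for the \emph{single} formula
\[\Theta(x,y;z)\ :=\ \exists u\,\bigl(u\cdot z=z\cdot u\ \wedge\ u\cdot x=y\bigr),\]
with the parameter now displayed as a variable. Since the ambient structure is stable, $\Theta$ is a stable formula, so it lacks the order property; the key point is that this gives a bound that is \emph{uniform in the parameter $z$}. Concretely, viewing $x$ as the object variable and $(y,z)$ as the parameters, stability of $\Theta$ yields $N\in\mathbb{N}$ such that for \emph{every} $a\in S$ the instance $\theta_a(x;y):=\Theta(x,y;a)$ admits no order‑property configuration of length $N$: an $a$‑indexed family of such configurations would, after absorbing $a$ into the parameter block (i.e.\ replacing the parameter tuple $q_j$ of $\theta_a$ by $(q_j,a)$), produce configurations of all lengths for $\Theta$ itself, contradicting its stability.

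Now fix $a\in S$ and run the argument of \Cref{P:idem} with this uniform $N$. As there, $S\models\Theta(a^{3^i},a^{3^j};a)$ for all $i<j$, witnessed by $u=a^{3^j-3^i}$; hence, exactly as in the proof of \Cref{P:idem} (the diagonal instances $\Theta(a^{3^i},a^{3^i};a)$ playing no role, as there), the failure of a length‑$N$ configuration forces indices $j<i<N$ and an element $C\in S$ with $C\cdot a=a\cdot C$ and $C\cdot a^{3^i}=a^{3^j}$. Since $i>j$ we have $3^i>2\cdot 3^j$, so multiplying by $a^{3^i-2\cdot3^j}$ gives $C\cdot a^{2k}=a^{k}$ with
\[k\ :=\ 3^i-3^j\ \le\ 3^{N}.\]
As in the proof of \Cref{P:s pi r}, $e:=C\cdot a^{k}$ is then an idempotent commuting with $a$, and $a^{k}$ lies in the unit group of $eSe$; by uniqueness of the associated idempotent (the Remark following \Cref{P:s pi r}) we get $e=e_a$, so $a^{k}\in G(e_aSe_a)$ with $k\le 3^{N}$.

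Finally set $n:=3^{N}$. For each $a\in S$ we have produced some $k\le n$ with $a^{k}\in G(e_aSe_a)$; by \Cref{L:higher-power-in-max-subgrp}, $a^{m}\in G(e_aSe_a)$ for every $m\ge k$, in particular for $m=n$. Hence $a^{n}\in G(e_aSe_a)$ for all $a\in S$, and $n$ depends only on $S$ (through the stability bound $N$), as required. The one thing that needs genuine care is the uniformity of $N$ over the parameter $z$; but this is exactly what passing from a parameter‑dependent family $\{\theta_a\}$ to the single stable formula $\Theta$ provides, so no obstacle arises beyond what is already present in \Cref{P:idem}.
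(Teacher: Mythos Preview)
Your proof is correct, but it takes a genuinely different route from the paper's. The paper argues by compactness: for each $i$, the condition ``$x^i$ lies in the unit group of $e_xSe_x$'' is expressible by a single formula $\phi_i(x)$ (namely, ``there exists an idempotent $e$ with $ex^i=x^ie=x^i$ and an inverse $y$ with $ey=ye=y$, $yx^i=x^iy=e$''); since every elementary extension of $S$ is still a stable semigroup and hence s$\pi$r, the $[\phi_i]$ cover the type space, compactness gives finitely many $n_1,\dots,n_k$ that suffice, and one takes $n=n_1\cdots n_k$ together with \Cref{L:higher-power-in-max-subgrp}. Your approach instead re-opens the proof of \Cref{P:idem} and extracts the uniformity directly from the fact that the single formula $\Theta(x;y,z)$ is stable, so its order-property bound $N$ is independent of the parameter $z=a$; this yields the explicit estimate $n=3^N$. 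The paper's argument is shorter and more conceptual, treating the earlier propositions as black boxes; yours gives an effective bound tied to the stability rank of $\Theta$, at the cost of retracing the earlier argument. Both are valid, and your handling of the uniformity issue (absorbing $a$ into the parameter block) is exactly right.
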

\begin{proof}
Let $\phi_i(x)$ be the formula '$x^i\in \text{ the unit group of } e_xSe_x$'. $\bigcup_i[\phi(x)]=S_1(S)$, since every elementary extension of $S$ is also stable and hence $s\pi r$. By compactness there exist $n_1,\dots,n_k>0$ such that $S_1(S)=[\phi_{n_1}\vee\dots\vee \phi_{n_k}]$. $n=n_1\cdots n_k$ is our desired integer.
\end{proof}

We return to the general case of $S$ being an $\infty$-definable  semigroup inside a stable structure. The following is an easy consequence of stability:

\begin{proposition} \label{P:chain of idempotents}
Every chain of idempotents in $S$, with respect to the partial order on them, is finite and uniformly bounded.
\end{proposition}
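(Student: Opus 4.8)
The plan is to deduce both assertions from the absence of the order property in stable theories, applied to the relation (definable, or at worst $\infty$-definable) that encodes the natural order on $E(S)$. Recall that on $E(S)$ one has $e\le f$ iff $e\cdot f=e=f\cdot e$, so let $R(x,y)$ denote the conjunction of the formulas expressing ``$x,y\in S$, $x\cdot y=x$ and $y\cdot x=x$''. If the multiplication of $S$ is a definable function this is a single formula; in general it is a partial type, and it cuts out exactly the graph of $\le$ on $E(S)$.

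First I would rule out infinite chains. Since every infinite linear order contains a subchain of order type $\omega$ or of order type $\omega^{*}$, an infinite chain of idempotents yields a sequence $(e_n)_{n<\omega}$ in $E(S)$ which is either strictly increasing or strictly decreasing; after possibly reindexing we may assume $R(e_i,e_j)$ holds precisely when $i\le j$. Extract, by Ramsey, an indiscernible subsequence $(e'_n)$. For the reversed pairs $R$ fails, hence some single conjunct must fail, and by indiscernibility the same conjunct $\chi(x,y)$ fails for all reversed pairs while holding for all forward pairs; thus $\chi$ has the order property, contradicting stability. (If one prefers, this is just the standard fact that a stable theory admits no infinite $\infty$-definable linear preorder on a sequence.) Consequently every chain of idempotents of $S$ — and of $S$ as computed in any elementary extension — is finite.

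For the uniform bound I would argue by compactness inside a sufficiently saturated $\mathfrak{C}$. If no bound existed, then for each $n$ there would be a chain $e_1<\dots<e_n$ of idempotents in $S(\mathfrak{C})$, so the partial type $\Pi(x_0,x_1,\dots)$ in countably many variables asserting ``$x_k\in S$, $x_k\cdot x_k=x_k$, $x_k\cdot x_{k+1}=x_k=x_{k+1}\cdot x_k$ and $x_k\ne x_{k+1}$ for all $k$'' would be finitely satisfiable, hence realized in $\mathfrak{C}$; this produces an infinite chain of idempotents in $S(\mathfrak{C})$, contradicting the previous paragraph. So there is an $N$ bounding the length of every chain of idempotents in $S(\mathfrak{C})$, and since any chain in any model embeds into $\mathfrak{C}$, the same $N$ works uniformly.

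The only genuinely delicate point is the first step when the graph of the operation on $S$ is merely $\infty$-definable: then ``$x\le y$'' is a partial type, so one cannot invoke ``no order property'' verbatim, and this is exactly why the passage to an indiscernible subsequence — pinning the instability down to a single conjunct $\chi$ — is required. The rest is routine compactness, which is presumably why the statement is advertised as an easy consequence of stability.
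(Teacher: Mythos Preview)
Your argument is correct; the paper itself gives no proof, merely asserting that the proposition ``is an easy consequence of stability'', so your write-up is essentially the intended one spelled out.

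One simplification worth noting: in the paper's setting (see the paragraphs preceding Lemma~\ref{L:technical-stuff-for-spr} and Lemma~\ref{L:max_subgrps_are_inf_definable}) the semigroup operation is always the restriction to $S$ of a \emph{definable} function on a definable superset $S_0\supseteq S$, obtained by compactness. Hence the relation $\theta(x,y):=\bigl(x\cdot y=x\ \wedge\ y\cdot x=x\bigr)$ is already a single formula, and an infinite chain $e_0<e_1<\cdots$ of idempotents gives $\theta(e_i,e_j)\iff i\le j$ directly; no indiscernible extraction to isolate a conjunct $\chi$ is needed. The ``genuinely delicate point'' you flag therefore does not arise in this paper, though your treatment of it is correct in any case. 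The compactness argument for the uniform bound is exactly right.
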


Our goal is to show that for every $a\in S$ there exists an idempotent $e\in S$ and $n\in\mathbb{N}$ such that $a^n$ is in the unit group of $eSe$.

We'll want to assume that $S$ is a conjunction of countably many formulae. For that we'll need to make some observations, the following is well known but we add a proof for completion,
\begin{lemma}
Let $S$ be an $\infty$-definable semigroup. Then there exist $\infty$-definable semigroups $H_i$ such that each $H_i$ is defined by at most a countable set of formulae, and $S=\bigcap H_i$.
\end{lemma}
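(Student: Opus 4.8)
The plan is an elementary compactness argument; stability is not needed. First I would pass to the harmless and standard assumption that the semigroup operation on $S$ is the restriction to $S\times S$ of a single $\emptyset$-definable function $f\colon M^n\times M^n\to M^n$; here one uses that the operation is relatively definable on $S\times S$, hence is the restriction of a definable function on a definable set containing $S\times S$, which one then makes total in an arbitrary way. Recall also that the $\varphi_i$ are over $\emptyset$ and closed under finite conjunctions.

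Next I would extract two consequences of the hypotheses by compactness. For each $i\in I$, every product of two elements of $S$ satisfies $\varphi_i$; since $S$ is defined by $\{\varphi_j:j\in I\}$, compactness together with closure under finite conjunctions yields an index $j(i)\in I$ with
\[\models\ \forall x\,\forall y\,\bigl(\varphi_{j(i)}(x)\wedge\varphi_{j(i)}(y)\ \rightarrow\ \varphi_i(f(x,y))\bigr).\]
Likewise, since $f$ is associative on $S$, there is an index $j^\ast\in I$ with
\[\models\ \forall x\,\forall y\,\forall z\,\bigl(\varphi_{j^\ast}(x)\wedge\varphi_{j^\ast}(y)\wedge\varphi_{j^\ast}(z)\ \rightarrow\ f(f(x,y),z)=f(x,f(y,z))\bigr).\]

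Now fix $i_0\in I$ and construct a countable $I'\subseteq I$ by closing $\{i_0,j^\ast\}$ under the operation $i\mapsto j(i)$ (iterate $\omega$ times; each stage is finite, so $I'$ is countable). Let $H_{i_0}:=\{x:\bigwedge_{i\in I'}\varphi_i(x)\}$. Then $S\subseteq H_{i_0}\subseteq\{x:\varphi_{i_0}(x)\}$; the set $H_{i_0}$ is defined by the countable family $\{\varphi_i:i\in I'\}$; it is closed under $f$ by the choice of the $j(i)$ (if $x,y\in H_{i_0}$ and $i\in I'$ then $j(i)\in I'$, so $x,y$ satisfy $\varphi_{j(i)}$ and hence $f(x,y)$ satisfies $\varphi_i$); and $f$ is associative on it because $j^\ast\in I'$. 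Thus $(H_{i_0},f|_{H_{i_0}\times H_{i_0}})$ is an $\infty$-definable semigroup, defined by countably many formulae, whose operation extends $\cdot$ on $S$. Finally $\bigcap_{i_0\in I}H_{i_0}=S$: the inclusion $\supseteq$ is immediate, and for $\subseteq$ one has $H_{i_0}\subseteq\{x:\varphi_{i_0}(x)\}$ for every $i_0\in I$, so $\bigcap_{i_0\in I}H_{i_0}\subseteq\{x:\bigwedge_{i\in I}\varphi_i(x)\}=S$.

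There is no genuine obstacle. The only points deserving a moment's care are the reduction to a total $\emptyset$-definable $f$, the observation that associativity — exactly like closure under the operation — is forced by finitely many of the $\varphi_i$, and the routine bookkeeping that closing an index set under $i\mapsto j(i)$ terminates at a countable set.
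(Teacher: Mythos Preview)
Your proposal is correct and follows essentially the same approach as the paper: use compactness to find, for each index $i$, a stronger $\varphi_{j(i)}$ forcing $\varphi_i$ to hold on products, then for each starting index close off under $j(\cdot)$ to obtain a countable subfamily defining a semigroup $H_{i_0}$; the paper does exactly this (building the chain $i^0,i^0_1,i^0_2,\ldots$) after first arranging that every $\varphi_i$ already forces associativity, where you instead carry along a single witness $j^\ast$. Your treatment is slightly more explicit about making the operation a total $\emptyset$-definable function, which the paper leaves implicit.
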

\begin{proof}
Let $S=\bigwedge_{i\in I} \varphi_i$ and assume that the $\varphi_i$s are closed under finite conjunctions. By compactness we may assume that for all $i$ and $x,y,z$
$$\varphi_i(x)\wedge\varphi_i(y)\wedge\varphi_i(z)\to (xy)z=x(yz).$$
Let $i^0\in I$. By compactness there exists $i^0_1\in I$ such that for all $x,y$:
$$\varphi_{i^0_1}(x)\wedge \varphi_{i^0_1}(y)\to \varphi_{i^0}(xy).$$
Thus construct a sequence $i^0,i^0_1,i^0_2,\dots$ and define 
$$H_{i^0}=\bigwedge_j \varphi_{i^0_j}.$$
This is indeed a semigroup and 
$$S=\bigcap_{i\in I} H_i.$$
\end{proof}

The following is also well known,

\begin{proposition}{\cite{unidim}}
An $\infty$-definable semigroup in a stable structure with left and right cancellation, or with left cancellation and right identity, is a group.
\end{proposition}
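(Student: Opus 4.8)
The plan is to isolate the two purely semigroup-theoretic facts which, together with strong $\pi$-regularity, force a semigroup to be a group, and to feed in stability only through the s$\pi$r property. Recall that $S$ is strongly $\pi$-regular (Proposition~\ref{T:inf-def-is-spr}; alternatively, $S$ with its induced structure is a stable semigroup, so that Propositions~\ref{P:idem} and \ref{P:s pi r} apply to it directly). For the present statement one in fact only needs that $S$ has an idempotent and that every $a\in S$ has some power $a^n$ lying in a subgroup of $S$. Assume $S\neq\emptyset$.

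\emph{Step 1: $S$ is a monoid.} Since $S$ is s$\pi$r it has an idempotent $e$ (in the second case one may simply take $e$ to be the given right identity, which is idempotent). From $e\cdot(e\cdot x)=(e\cdot e)\cdot x=e\cdot x$ and left cancellation we get $e\cdot x=x$ for all $x\in S$, so $e$ is a left identity. In the first case, from $(x\cdot e)\cdot e=x\cdot(e\cdot e)=x\cdot e$ and right cancellation we get $x\cdot e=x$; in the second case $e$ is a right identity by hypothesis. Thus $e$ is a two-sided identity in both cases; moreover if $f\in E(S)$ then the same computation shows $f$ is a left identity, so $f=f\cdot e=e$, i.e.\ $E(S)=\{e\}$.

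\emph{Step 2: every element is invertible.} Fix $a\in S$. By s$\pi$r there are $n\in\mathbb{N}$ and a subgroup $G\subseteq S$ with $a^n\in G$; the neutral element of $G$ is an idempotent of $S$, hence is $e$, so $a^n$ is invertible in the monoid $(S,e)$ (cf.\ \Cref{P:Max.subgroups}). Choosing $b\in S$ with $a^n b=b a^n=e$, the element $a^{n-1}b$ is a right inverse of $a$ and $b a^{n-1}$ is a left inverse, and a monoid element possessing both a left and a right inverse is invertible (the two coincide). Hence $a\in G(S)$, and as $a$ was arbitrary $S=G(S)$ is a group.

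The only genuine content I expect is the s$\pi$r-ness of $S$ — equivalently, locating an idempotent and a group-bound power of $a$ inside $S$ itself rather than merely in the ambient monster model; Steps 1--2 are formal. For completeness I would also record the more hands-on argument available in the definable case, bypassing s$\pi$r: once $S$ is a monoid with identity $e$, left cancellation makes each translation $\lambda_{a^n}\colon S\to S$ injective, so $S\supseteq aS\supseteq a^2S\supseteq\cdots$ is a descending chain of uniformly definable sets, which stabilizes by stability; from $a^NS=a^{N+1}S$ we get $a^N=a^{N+1}s$ for some $s\in S$, hence $a^N\cdot e=a^N\cdot(a\cdot s)$, left cancellation gives $a\cdot s=e$, and then $a\cdot(s\cdot a)=(a\cdot s)\cdot a=a=a\cdot e$ gives $s\cdot a=e$. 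Transporting this verbatim to $\infty$-definable $S$ is obstructed by the non-definability of $aS$, which is exactly why I would route through Proposition~\ref{T:inf-def-is-spr}.
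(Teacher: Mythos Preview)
The paper does not give its own proof of this statement; it is quoted from \cite{unidim} and treated as an input. In the paper's logical order it sits \emph{before} Proposition~\ref{T:inf-def-is-spr} and is actually used on the way there: Lemma~\ref{L:max_subgrps_are_inf_definable} invokes it to conclude that $G_0\cap S$ is a group, and the very next lemma (``if $S\subseteq S_1$ with $S_1$ s$\pi$r then $S$ is s$\pi$r'') invokes it again to conclude that $G_e\cap S$ is a group --- and that lemma is exactly the reduction to countably presented $S$ that precedes the proof of Proposition~\ref{T:inf-def-is-spr}. So your main route, deducing the proposition from Proposition~\ref{T:inf-def-is-spr}, is circular within the paper's development.

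Your parenthetical alternative, ``$S$ with its induced structure is a stable semigroup, so Propositions~\ref{P:idem} and \ref{P:s pi r} apply directly,'' conceals the same difficulty. Those proofs locate $C$ with $Ca^{3^l}=a^{3^k}$ by a single no-order-property argument applied to a \emph{formula} $\theta(x,y)=\exists u(\ldots)$; when $S$ is only type-definable the clause ``$u\in S$'' is an infinite conjunction, $\theta$ is no longer a formula, and one obtains only witnesses $C_i\in\varphi_i(M)$ varying with $i$. Assembling from these an idempotent $e$ and an inverse $\beta$ that actually lie in $S$ is precisely the work carried out in Lemma~\ref{L:technical-stuff-for-spr} and Proposition~\ref{T:inf-def-is-spr}. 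Your Steps~1--2 are correct once s$\pi$r is granted, and your closing descending-chain sketch is the right idea in the definable case, but neither path furnishes an independent proof of the cited proposition; in the paper the dependence runs the other way.
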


As a consequence,
\begin{lemma}\label{L:max_subgrps_are_inf_definable}
Let $S$ be an $\infty$-definable semigroup and $G_e\subseteq S$ a maximal subgroup (with idempotent $e\in E(S)$). $G_e$ is relatively definable in $S$.
\end{lemma}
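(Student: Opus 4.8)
The plan is to pass from $S$ to the monoid $eSe$, recognise $G_e$ as its group of units, and show that both $G_e$ and its complement are relatively type-definable inside $eSe$; compactness then upgrades this to relative definability.

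First I would observe that $eSe = S \cap \{x : ex = x \wedge xe = x\}$: if $x = ese$ then $ex = xe = x$ since $e^2 = e$, and conversely $ex = xe = x$ gives $x = exe \in eSe$. Hence $eSe$ is relatively definable in $S$, so it suffices to prove that $G_e = G(eSe)$ is relatively definable in the $\infty$-definable monoid $N := eSe$ (with identity $e$). Now $G_e = \{x \in N : \exists y \in N,\ xy = yx = e\}$ is the image of the relatively type-definable set $\{(x,y) \in N^2 : xy = yx = e\}$ under the projection to the first coordinate, and projections of type-definable sets are type-definable (compactness in a saturated model), so $G_e$ is relatively type-definable in $N$.

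The heart of the proof is that $N \setminus G_e$ is relatively type-definable in $N$ as well. For this I would prove $G_e = N_L \cap N_R$, where $N_L$ (resp.\ $N_R$) is the set of $x \in N$ that left- (resp.\ right-) cancel within $N$. The inclusion $\supseteq$ is where stability and the cited Proposition enter: $N_L \cap N_R$ is a subsemigroup of $N$ with two-sided cancellation and two-sided identity $e$, so \emph{if} it were $\infty$-definable the cited Proposition would make it a group, hence a subgroup of $N$ with idempotent $e$, hence contained in $G(N) = G_e$ by \Cref{P:Max.subgroups}; and $\subseteq$ is trivial. To settle the $\infty$-definability (the delicate point) — or to argue directly — take $x \in N_L \cap N_R$. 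If $x^i = x^j$ for some $i < j$, cancelling $x^i$ gives $x^{j-i} = e$, so $x$ is invertible. Otherwise $x, x^2, \dots$ are distinct elements of $N$; if $x$ is right-invertible in $N$, say $xu = e$, then $ux$ is an idempotent with $x(ux) = x = xe$, so left-cancellation of $x$ gives $ux = e$ and $x$ is a unit; and if $x$ is not right-invertible, then the relatively type-definable preorder $u \preceq v :\Leftrightarrow \exists w \in N,\ uw = v$ restricts to a copy of $(\mathbb{N}, \leq)$ on $\{x^n : n \geq 1\}$ — for $i > j$ the relation $x^i \preceq x^j$ would, after cancelling $x^j$, make $x$ right-invertible — giving an infinite relatively type-definable linearly ordered set, impossible in a stable structure. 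Thus $x$ is a unit, and $G_e = N_L \cap N_R$.

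Granted this, $N \setminus G_e = B_L \cup B_R$ with $B_L := \{x \in N : \exists y, z \in N,\ y \neq z \wedge xy = xz\}$ a projection of a relatively type-definable set, and symmetrically $B_R$; a finite union of relatively type-definable sets is relatively type-definable, so $N \setminus G_e$ is relatively type-definable in $N$. Now $G_e$ and $N \setminus G_e$ are complementary relatively-type-definable subsets of $N$, so by compactness $G_e = N \cap \xi$ for a single formula $\xi$; composing with $N = S \cap \{ex = x \wedge xe = x\}$ exhibits $G_e$ as relatively definable in $S$. The main obstacle is precisely the identification $G_e = N_L \cap N_R$, i.e.\ ruling out two-sided-cancellable non-units in $eSe$ — a monoid phenomenon that fails abstractly (think of $1 \in (\mathbb{N},+)$) and must be killed by stability, either via the cited Proposition (once one knows $N_L\cap N_R$ is $\infty$-definable) or directly through the absence of infinite type-definable linear orders.
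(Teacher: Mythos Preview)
Your approach is genuinely different from the paper's, and there is a real gap at the crucial step. You aim to show $G_e = N_L \cap N_R$ (the two-sided cancellative elements of $N := eSe$), so that $N \setminus G_e = B_L \cup B_R$ is type-definable and compactness finishes. The problematic step is your direct argument that a cancellative $x \in N$ with all powers distinct and no right inverse contradicts stability.

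You assert that the type-definable preorder $u \preceq v :\Leftrightarrow \exists w \in N,\ uw = v$ restricting to a copy of $(\mathbb{N}, \leq)$ on $\{x^n\}$ is ``impossible in a stable structure''. This does not follow. Stability rules out infinite chains in type-definable \emph{partial} orders (extract an indiscernible sequence, use total indiscernibility, then antisymmetry forces a collapse), but your $\preceq$ is only a preorder on $N$. If one extracts an indiscernible sequence $(b_n)$ from $(x^n)$, one gets $b_m \preceq b_n$ for all $m, n$ and $b_m \neq b_n$ for $m \neq n$ --- no contradiction, since $\preceq$ need not be antisymmetric off $\{x^n\}$. Nor can you transfer left-cancellation to the $b_n$'s: $N_L$ is only \emph{co}-type-definable, so membership in it is not preserved under extraction. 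The obvious repair --- replace $N$ by a definable approximation $\psi_i$ in the definition of $\preceq$ to get a \emph{definable} relation --- fails too: if $x^i w = x^j$ with $w \in \psi_i$, you cannot cancel $x^j$, because cancellation for $x^j$ is only known against elements of $N$, and $x^{i-j}w$ need not lie in $N$. So neither route you sketch (``$N_L\cap N_R$ is $\infty$-definable'' or ``argue directly'') actually closes.

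The paper bypasses this entirely. Having observed that $G_e$ is $\infty$-definable (essentially your projection argument), it invokes the result from \cite{unidim} that an $\infty$-definable group in a stable structure sits inside a \emph{definable} group $G_0$. Since $e \in G_e \subseteq G_0$ and a group has a unique idempotent, $e$ is the identity of $G_0$; then $G_0 \cap S$ is an $\infty$-definable semigroup with cancellation and identity $e$, hence a group by the cited Proposition, hence contained in the maximal subgroup $G_e$. Thus $G_e = G_0 \cap S$ is relatively definable in $S$. Your framework could be completed by this same appeal to \cite{unidim}; as written, however, the stability step does not go through.
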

\begin{proof}
Let $S=\bigwedge_i \varphi_i(x)$. By compactness, there exists a definable set $S\subseteq S_0$ such that for all $x,y,z\in S_0$ \[x(yz)=(xy)z.\]
Let $G_e(x)$ be 
\[\bigwedge_i\varphi_i(x)\wedge (xe=ex=x)\wedge \bigwedge_i(\exists y\in S_0)(\varphi_i(y)\wedge ye=ey=y\wedge yx=xy=e).\]
This $\infty$-formula defines the maximal subgroup $G_e$. Indeed if $a\models G_e(x)$ and $b,b^\prime\in S_0$ are such that 
\[\varphi_i(b)\wedge be=eb=b\wedge ba=ab=e\]
and
\[\varphi_j(b^\prime)\wedge b^\prime e=eb^\prime=b^\prime\wedge b^\prime a=ab^\prime=e,\]
then 
\[b^\prime=b^\prime e=b^\prime(ab)=(b^\prime a)b=eb=b.\]
Hence there exists an inverse of $a$ in $S$.

Let $G_e\subseteq G_0$ be a definable group containing $G_e$ (see \cite{unidim}). $G_0\cap S$ is an $\infty$-definable subsemigroup of $S$ with cancellation, hence a subgroup. It is thus contained in the maximal subgroup $G_e$ and so equal to it.
\end{proof}

\begin{lemma}
Let $S$ be an $\infty$-definable semigroup and $S\subseteq S_1$ an $\infty$-definable semigroup containing it. If $S_1$ is s$\pi$r then so is $S$.
\end{lemma}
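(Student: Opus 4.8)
The plan is to fix $a\in S$ and produce a power of $a$ that lies in a subgroup of $S$. Since $S_1$ is s$\pi$r, there is $n\in\mathbb{N}$ such that $a^n$ lies in a subgroup of $S_1$; any such subgroup is contained in a maximal one, which by \Cref{P:Max.subgroups} has the form $G_e:=G(eS_1e)$ for some idempotent $e\in E(S_1)$, so $a^n\in G_e$. By \Cref{L:max_subgrps_are_inf_definable} applied to $S_1$, the maximal subgroup $G_e$ is relatively definable in $S_1$, and since $S_1$ is $\infty$-definable in the ambient stable structure, so is $G_e$.

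Next I would consider $H:=S\cap G_e$. It is an $\infty$-definable subsemigroup of $S_1$ (an intersection of two $\infty$-definable sets in the stable ambient structure), and it is nonempty, since it contains $a^n,a^{2n},\dots$. Being a subsemigroup of the group $G_e$, it inherits left and right cancellation from $G_e$. The proposition of \cite{unidim} quoted above --- an $\infty$-definable semigroup in a stable structure with left and right cancellation is a group --- then applies to $H$ and shows that $H$ is a group. Its neutral element is an idempotent of $S$, so $H$ is a subgroup of $S$ in the sense used in the paper, and it contains $a^n$. As $a$ was arbitrary, $S$ is s$\pi$r.

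The step carrying the real content --- and the only place stability is genuinely needed --- is the passage from $H$ to the conclusion that $H$ is a group: the purely semigroup-theoretic statement is false, as one sees already with $(\mathbb{N}_{>0},+)\subseteq(\mathbb{Z},+)$. Concretely, the stability hypothesis enters twice: through \Cref{L:max_subgrps_are_inf_definable}, to ensure that $G_e$ (hence $H$) is $\infty$-definable, and through the cancellation criterion of \cite{unidim} applied to $H$. I expect no serious obstacle beyond checking that these hypotheses are legitimately in force; the whole argument is short.
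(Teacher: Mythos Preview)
Your proof is correct and follows essentially the same approach as the paper: take $a^n$ in a maximal subgroup $G_e$ of $S_1$, intersect with $S$, and invoke the cancellation criterion from \cite{unidim} to conclude that $G_e\cap S$ is a group. The paper's version is terser (it does not explicitly cite \Cref{L:max_subgrps_are_inf_definable} for the $\infty$-definability of $G_e$, though this is implicit), but the argument is the same.
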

\begin{proof}
Let $a\in S$ and let $a^n\in G_e\subseteq S_1$ where $G_e$ is a maximal subgroup of $S_1$. Thus $a^n\in G_e\cap S$. Since $G_e\cap S$ is an $\infty$-definable subsemigroup of $S$ with cancellation, it is a subgroup.
\end{proof}

We may, thus, assume that $S$ is the conjunction of countably many formulae.

Furthermore, we may, and will, assume that $S$ is commutative. Indeed, let $a\in S$. By compactness we may find a definable set $S\subseteq S_0$ such that for all $x,y,z\in S_0$:
\[x(yz)=(xy)z.\]
Define $D_1=\{x\in S_0: xa=ax\}$ and then \[D_2=\{x\in S: (\forall c\in D_1)\: xc=cx\}.\]
$D_2$ is an $\infty$-definable commutative subsemigroup of $S$ with $a\in D_2$.

\begin{lemma}\label{L:technical-stuff-for-spr}
There exist definable sets $S_i$ such that $S=\bigcap S_i$, the multiplication on $S_i$ is commutative and that for all $1<i$ there exists $C_i\in S_i$ and $n_i,m_i\in \mathbb{N}$ such that
\begin{enumerate}
\item $n_i>2m_i$;
\item $e_i:=C_ia^{n_i-m_i}$ is an idempotent;

and furthermore for all $1<j\leq i$:
\item $n_j-m_j\leq n_i-m_i$;
\item $e_je_i=e_i$;
\item $e_ia^{n_i-m_i}=a^{n_i-m_i}$.

\end{enumerate}
\end{lemma}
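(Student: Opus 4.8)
The plan is to run the idempotent‑producing argument of \Cref{P:idem} at successively finer definable approximations of $S$, interleaving the choices so that the exponents $n_i-m_i$ are nondecreasing; this single numerical precaution is what will force conditions (3)--(5). The one delicate point is preparatory: the witnesses $C_i$ will live not in $S$ but in definable overapproximations of it, and the multiplication is only known to be associative and commutative on such approximations, so before anything else the $S_i$ must be arranged to absorb every finite product the argument will form.

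First I would fix the $S_i$. Since $S$ is (as already arranged) commutative and the conjunction of countably many $\varphi_i$ closed under finite conjunction, a compactness argument of exactly the kind used in the earlier reduction to countably many formulae produces a fast‑growing subsequence cutting out definable sets $S_i$ with $S\subseteq S_i$, $S_{i+1}\subseteq S_i$, $\bigcap_i S_i=S$, such that for each $i$ (with $S_0:=S_1$) the multiplication restricted to $S_i$ is commutative and associative and maps $S_i\times S_i$ into $S_{i-1}$. A routine induction on the length of a product then gives: for any fixed $k$, any product of $k$ elements of a single $S_m$ (repetitions allowed) is independent of bracketing and order and lies in $S_{\max(m-k+1,\,1)}$. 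This is the only fact about multiplication used below, and it suffices because at each stage only boundedly many products — of the $C_j$, the $e_j=C_j a^{n_j-m_j}$, and powers of $a$ — are manipulated.

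The construction is by induction on $i$, with nothing required for $i=1$. At stage $i\ge 2$ the numbers $M_j:=n_j-m_j$ for $1<j<i$ are nondecreasing by (3); pick $K$ with $2\cdot 3^{K}\ge M_{i-1}$ and apply \Cref{P:idem} to the formula $\theta_i(x,y):\equiv\exists u\,(u\in S_i\wedge u\cdot x=y)$ — a formula over $a$ and the parameters of $S_i$ — and the sequence $(a^{3^k})_{k\ge K}$: the instances $\theta_i(a^{3^k},a^{3^l})$ for $K\le k<l$ hold with witness $a^{3^l-3^k}\in S$, so by stability there are $K\le m<n$ and $C_i\in S_i$ with $C_i\cdot a^{3^n}=a^{3^m}$. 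Put $n_i:=3^n$, $m_i:=3^m$, $e_i:=C_i a^{n_i-m_i}$. Then (1) is immediate from $m<n$; multiplying $C_i a^{n_i}=a^{m_i}$ by the appropriate power of $a$ and regrouping inside $S_i$, as in \Cref{P:idem}, gives $C_i a^{2(n_i-m_i)}=a^{n_i-m_i}$, whence $e_i$ is idempotent (2) and $e_i a^{n_i-m_i}=a^{n_i-m_i}$ (5); finally $n_i-m_i\ge 2\cdot 3^m\ge 2\cdot 3^{K}\ge M_{i-1}$ gives (3). For (4): since $M_{i-1}\le n_i-m_i$, condition (5) at stage $i-1$ (namely $e_{i-1}a^{M_{i-1}}=a^{M_{i-1}}$) upgrades to $e_{i-1}a^{n_i-m_i}=a^{n_i-m_i}$; regrouping the four factors $C_i,a^{n_i-m_i},C_{i-1},a^{M_{i-1}}$ inside $S_{i-1}$ then yields $e_i e_{i-1}=C_i\,(e_{i-1}a^{n_i-m_i})=C_i a^{n_i-m_i}=e_i$; and for $1<j<i-1$, since $e_i,e_{i-1},e_j$ all lie in $S_{j-1}$, associativity there together with (4) at stage $i-1$ gives $e_i e_j=(e_i e_{i-1})e_j=e_i(e_{i-1}e_j)=e_i e_{i-1}=e_i$. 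Commutativity of the $S_{j-1}$‑multiplication turns these into $e_j e_i=e_i$ for all $1<j\le i$.

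The step I expect to be the real work is not any of these computations — they merely transcribe \Cref{P:idem} — but the initial choice of the $S_i$: arranging the closure depth and the nesting so that every product occurring in the verification genuinely lies in one $S_m$ on which the associative and commutative laws are available. That bookkeeping is precisely the ``technical'' content of the statement; with the $S_i$ in hand, the rest is essentially mechanical.
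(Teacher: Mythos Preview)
Your proof is correct and follows essentially the same route as the paper's: fix definable approximations $S_i$ with enough associativity and commutativity by compactness, then at each level run the order-property argument of \Cref{P:idem} on the sequence $(a^{3^k})$ to produce $C_i\in S_i$ with $C_ia^{n_i}=a^{m_i}$ and hence the idempotent $e_i=C_ia^{n_i-m_i}$. The only differences are cosmetic: for (3) you force $n_i-m_i\ge n_{i-1}-m_{i-1}$ by starting the sequence at a sufficiently large index $K$, whereas the paper simply takes $n_i-m_i$ \emph{minimal} among all witnesses in $S_i$ and uses $S_i\subseteq S_j$ to compare minima; and for (4) you argue inductively via $e_ie_j=(e_ie_{i-1})e_j=e_i(e_{i-1}e_j)$, whereas the paper gives a single direct computation for arbitrary $j<i$ using $C_ja^{n_j}=a^{m_j}$.
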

\begin{proof}
By compactness we may assume that $S=\bigcap S_i$, where \[S_0\supseteq S_1\supseteq S_2\supseteq \dots\]
are definable sets such that for all $i>1$ we are allowed to multiply associatively and commutatively $\leq 20$ elements of $S_i$ and get an element of $S_{i-1}$.

Let $i>1$ and let $\theta(x,y)$ be 
\[\exists u\in S_i \: ux=y.\] 
Obviously, $\models \theta(a^{3^k},a^{3^l})$ for $k<l$. By stability $\theta$ doesn't have the order property. Thus there exist $k<l$ such that $\models \theta(a^{3^l},a^{3^k})$. Let $C_i\in S_i$ be such that $C_ia^{3^l}=a^{3^k}$.

Since $l>k$ we have $3^l> 2\cdot 3^k$ (this gives 1). Let $n_i=3^l$ and $m_i=3^k$. $e_i:=C_ia^{n_i-m_i}\in S_{i-1}$ is an idempotent (this gives 2), for that first notice that:
\[C_ia^{2n_i-2m_i}=C_ia^{n_i}a^{n_i-2m_i}=a^{m_i}a^{n_i-2m_i}=a^{n_i-m_i}.\]
Hence,
\[(C_ia^{n_i-m_i})(C_ia^{n_i-m_i})=C_i^2a^{2n_i-2m_i}=C_ia^{n_i-m_i}.\]

We may take $n_i-m_i$ to be minimal, but then since $S_i\subseteq S_j$ for $j<i$ we have $n_j-m_j\leq n_i-m_i$ (this gives 3).

As for 4, if $1<j<i$:
\[e_ie_j=C_ia^{n_i-m_i}C_ja^{n_j-m_j}=C_ia^{n_i-m_i+m_j}C_ja^{n_j-2m_j}\]
but $n_i-m_i+m_j\geq n_j$, so
\[C_ia^{n_i-m_i+m_j-n_j}C_ja^{2n_j-2m_j}=C_ia^{n_i-m_i+m_j-n_j}a^{n_j-m_j}=e_i.\]
5 follows quite similarly to what we've done.
\end{proof}

\begin{proposition}\label{T:inf-def-is-spr}
Let $S$ be an $\infty$-definable semigroup inside a stable structure. Then $S$ is strongly $\pi$-regular.
\end{proposition}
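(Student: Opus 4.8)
The plan is to reduce, as the excerpt has carefully set up, to the case where $S$ is the intersection of a decreasing chain of definable \emph{commutative} sets $S_0\supseteq S_1\supseteq\dots$ on which one may multiply associatively and commutatively up to $20$ elements, and to fix $a\in S$ and the data $C_i\in S_i$, $n_i,m_i\in\mathbb{N}$ and idempotents $e_i=C_ia^{n_i-m_i}\in S_{i-1}$ provided by \Cref{L:technical-stuff-for-spr}. The goal is to produce a single idempotent $e\in S$ and a single $n$ with $a^n$ in the unit group of $eSe$; since $S=\bigcap S_i$, it suffices to find $e$ lying in every $S_i$.

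First I would use \Cref{P:chain of idempotents}: the sequence of idempotents $(e_i)$ need not be a chain under the natural partial order on $E(S)$, but clauses (4) and (5) of \Cref{L:technical-stuff-for-spr} say $e_je_i=e_i$ for $j\le i$ and $e_ia^{n_i-m_i}=a^{n_i-m_i}$. In the commutative setting $e_je_i=e_i$ is exactly $e_i\le e_j$ in the natural order on idempotents, so $e_1\ge e_2\ge e_3\ge\cdots$ is a descending chain of idempotents in $S_0$. By \Cref{P:chain of idempotents} such a chain in $S$ (or rather: the corresponding chain must stabilize, since an infinite strictly descending chain of idempotents would contradict the uniform bound on chains of idempotents — here one has to be a little careful that the $e_i$ lie in the ambient structure, not necessarily in $S$, but the relevant formula defining the partial order on idempotents of $S_0$ still cannot have infinite descending chains by stability applied to $S_0$, or one passes to a saturated model). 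Hence there is $N$ with $e_i=e_N$ for all $i\ge N$; call this common idempotent $e$. Since $e=e_i\in S_{i-1}$ for every $i\ge N$ and the $S_{i-1}$ are cofinal, $e\in\bigcap_i S_i=S$.

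Next I would check that $a^{n}\in G(eSe)$ for suitable $n$. From clause (5), $e\cdot a^{n_i-m_i}=a^{n_i-m_i}$ for all $i\ge N$, and by commutativity $a^{n_i-m_i}\cdot e=a^{n_i-m_i}$ as well, so $a^{n_i-m_i}\in eSe$ once we know $a^{n_i-m_i}\in S$ — but $a\in S$ so all its powers are in $S$. Fix $i=N$, write $k=n_N-m_N$ and $C=C_N$. The computation in the proof of \Cref{L:technical-stuff-for-spr} gives $C a^{2k}=a^{k}$ and $e=Ca^{k}$, $e^2=e$. Then $a^{k}$ has an inverse in $eSe$: by commutativity $a^k\cdot (Ce) \cdot a^k$ — actually set $b:=e\cdot C\cdot e=Ce$ (idempotent-adjusted inverse); one computes $a^k\cdot b=a^k Ce=(Ca^k)e=e^2=e$ wait, more directly $a^kCe=C a^k e=(Ca^{2k})e\cdot a^{-k}$ is not available, so instead argue: $eSe$ is a definable-over-$S$ monoid with identity $e$ (\Cref{P:Max.subgroups}), it is commutative, and $a^k\in eSe$ satisfies $a^k\cdot(Ce)=(Ca^k)\cdot e=e\cdot e$? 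No: $Ca^k=e$ only after multiplying by the right power; rather $C\cdot a^{2k}=a^k$ means $a^k$ is left-invertible in $eSe$ via $C$ acting, hence (in a commutative monoid, or by the cancellation argument of \Cref{L:max_subgrps_are_inf_definable}) $a^k$ lies in a subgroup. Concretely, $\{a^{jk}:j\ge 1\}$ together with $e$ is cyclic and $\pi$-regular since $Ca^{2k}=a^k$ forces it into a group, so $a^k\in G(eSe)$, and then by \Cref{L:higher-power-in-max-subgrp} $a^m\in G(eSe)$ for all $m\ge k$. Taking $n=k$ (or any $n\ge k$) finishes the proof.

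The main obstacle I expect is the stabilization step: making rigorous that the $e_i$, which a priori live only in the ambient stable structure and not in $S$, form a descending chain subject to \Cref{P:chain of idempotents}, and that the limit idempotent genuinely lies in $S$ rather than merely in $\bigcap S_i$ as sets — these coincide by construction, but one must invoke that $\bigcap_i S_{i-1}=S$ and that $e=e_i$ for all large $i$. A secondary, purely bookkeeping obstacle is verifying clause (5) (only sketched as "follows quite similarly" in the preceding lemma) and assembling the inverse of $a^k$ inside the commutative monoid $eSe$; both are routine manipulations with the identities $Ca^{2k}=a^k$ and $e=Ca^k$, using commutativity to move factors freely, but they need to be written out to be convincing.
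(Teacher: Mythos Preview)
Your stabilization of the idempotents is correct and matches the paper: once $e_i=e_N$ for all $i\ge N$, the identity $e=e_i\in S_{i-1}$ for every large $i$ forces $e\in\bigcap_i S_{i-1}=S$.

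The genuine gap is in the last step, where you try to produce an inverse for $a^k$ inside $eSe$. You fix a single level $N$, set $C=C_N$ and $k=n_N-m_N$, and attempt to build the inverse from $C$ alone. But $C_N$ lies only in $S_N$, not in $S$; consequently any candidate inverse such as $eCe$ is only known to lie in $S_{N-1}$. The identities $Ca^{2k}=a^k$ and $e=Ca^k$ do \emph{not} by themselves force the inverse into $S$, and neither does the cancellation argument you cite from \Cref{L:max_subgrps_are_inf_definable}: that lemma presupposes an inverse already living in $S$ (that is the content of the formula $G_e(x)$ there). Your closing remark that this is ``routine bookkeeping'' is exactly where the proof breaks.

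The paper's fix is the point you are missing. Set $\beta_{i-1}:=e_iC_ie_i$ and $k_i:=n_i-m_i$; after stabilization $\beta_1=eC_2e$. The key computation is
\[
\beta_1=\beta_1\cdot e=\beta_1\cdot a^{k_{i+1}}\cdot\beta_i
      =e\cdot a^{k_{i+1}-k_2}\cdot\beta_i
      =e\cdot a^{k_{i+1}-k_2}\cdot eC_{i+1}e,
\]
which exhibits $\beta_1$ as a product of at most $20$ elements of $S_{i+1}$, hence $\beta_1\in S_i$. Since $i$ was arbitrary, $\beta_1\in S$. Thus $\beta:=\beta_1$ and $k:=k_2$ satisfy $a^k\beta=\beta a^k=e$ and $e\beta=\beta e=\beta$, placing $a^k$ in the unit group of $eSe$. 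The moral is that the inverse must be shown to lie in \emph{every} $S_i$, and for that one must use the data $C_{i+1}$ from arbitrarily deep levels, not just $C_N$.
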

\begin{proof}
Let $a\in S$. For all $i>1$ let $S_i, C_i, n_i$ and $m_i$ be as in Lemma \ref{L:technical-stuff-for-spr}. Set $k_i=n_i-m_i$ and
\[e_{i-1}=C_ia^{k_i} \text{ and } \beta_{i-1}=e_iC_ie_i,\]
notice that these are both elements of $S_{i-1}$ (explaining the  sub-index). 

By Lemma \ref{L:technical-stuff-for-spr}{(4)} we get a descending sequence of idempotents \[e_1\geq e_2\geq \dots,\] with respect to the partial order on the idempotents. By stability it must stabilize. 
Thus we may assume that $e:=e_1=e_2=\dots$ and is an element of $S$.

Moreover, for all $i>1$
\[\beta_1=\beta_1\cdot e=\beta_1a^{k_{i+1}}\cdot \beta_i=e\cdot a^{k_{i+1}-k_2}\beta_i.\]
So \[\beta_1=e\cdot a^{k_{i+1}-k_2}eC_{i+1}e\]
which is a product of $\leq 20$ elements of $S_{i+1}$ and thus $\in S_i$. Also $\beta_1\in S$.
In conclusion, by setting $k:=k_2$ and $\beta:=\beta_1$,
\[a^ke=ea^k=a^k, \text{ } a^k\beta=\beta a^k=e \text{ and } \beta e=e\beta=\beta.\]
So $a^k$ is in the unit group of $eSe$.
\end{proof}

\begin{corollary}
There exists an $n\in\mathbb{N}$ such that for all $a\in S$, $a^n$ is an element of a subgroup of $S$.
\end{corollary}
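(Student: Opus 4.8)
The plan is to re-read the proof of \Cref{T:inf-def-is-spr} and observe that the exponent it produces is bounded uniformly in $a$; the corollary then follows at once from Munn's lemma. Recall the set-up there: one fixes, depending only on $S$, a decreasing chain of definable sets $S_0\supseteq S_1\supseteq\cdots$ with $\bigcap_i S_i=S$, and for a given $a\in S$ the argument ends up using the exponent $k=k_2=n_2-m_2$ coming from the $i=2$ instance of \Cref{L:technical-stuff-for-spr}, where $n_2=3^{q}$, $m_2=3^{p}$ with $p<q$ are extracted by applying stability to the single formula
\[\theta(x,y)\;\equiv\;\exists u\,\bigl(u\in S_2\wedge u\cdot x=y\bigr),\]
and for this $k$ one gets an idempotent $e_a\in E(S)$ with $a^{k}$ in the unit group of $e_aSe_a$.

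The key observation I would make is that $\theta$ is a fixed formula which does not mention $a$: the element $a$ enters only through the tuple $\bigl(a^{3^i}\bigr)_i$ that is tested against $\theta$, and that tuple lies in $S\subseteq S_2$ since $S$ is a semigroup. Because the ambient structure is stable, $\theta$ lacks the order property, so there is an absolute bound $N\in\mathbb{N}$ — depending only on $S$ and the chosen chain $(S_i)$ — such that no sequence $c_1,\dots,c_N$ has $\models\theta(c_i,c_j)\Leftrightarrow i<j$ (this is exactly the form in which it is used in \Cref{L:technical-stuff-for-spr}). Feeding $c_i=a^{3^i}$ into this bound, the pair $p<q$ obtained there may always be chosen with $q\le N$, hence $k(a)=3^{q}-3^{p}<3^{N}$ for every $a\in S$, with $3^{N}$ independent of $a$.

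To finish, fix $a\in S$. By the refined proof of \Cref{T:inf-def-is-spr} there is an idempotent $e_a\in E(S)$ with $a^{k(a)}$ in the unit group of $e_aSe_a$, where $k(a)<3^{N}$. By \Cref{L:higher-power-in-max-subgrp}, $a^{m}$ lies in the unit group of $e_aSe_a$ for every $m\ge k(a)$; so taking $n:=3^{N}$, which satisfies $n\ge k(a)$ for all $a$ simultaneously, we conclude that $a^{n}$ lies in a subgroup of $S$ for every $a\in S$. The only delicate point is the second paragraph, namely that the integer falling out of the stability argument inside \Cref{L:technical-stuff-for-spr} is genuinely independent of $a$; this is the same mechanism that makes \Cref{C:universal n} work in the purely definable case, the difference being that in the $\infty$-definable setting it cannot be packaged as a single first-order formula over $S$ and must be read off the proof directly.
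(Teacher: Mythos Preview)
The paper's proof is a single word: ``Compactness.'' Your route is genuinely different --- instead of invoking compactness abstractly, you try to extract a uniform bound directly from the mechanics of the proof of \Cref{T:inf-def-is-spr}, arguing that the order-property rank of the formula $\theta$ used there is independent of $a$, and hence so is the exponent $k_2$.

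There is, however, a gap. You assert that the chain $S_0\supseteq S_1\supseteq\cdots$ is fixed ``depending only on $S$'', and hence that $\theta(x,y)\equiv\exists u\,(u\in S_2\wedge u\cdot x=y)$ does not mention $a$. But in the paper, before \Cref{L:technical-stuff-for-spr} is invoked, $S$ is first replaced by the commutative $\infty$-definable subsemigroup $D_2=D_2(a)$ (the double centraliser of $a$ inside $S$), and the chain $(S_i)$ is then built as an approximation of $D_2(a)$, not of the original $S$. Commutativity is genuinely used in \Cref{L:technical-stuff-for-spr} --- for instance in showing that $e_i=C_ia^{n_i-m_i}$ is an idempotent and that $e_je_i=e_i$ --- so this reduction cannot simply be omitted. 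Consequently $S_2$, and with it $\theta$, carries $a$ as a hidden parameter, contrary to what you claim.

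This is repairable: $D_2(a)$ is defined by a formula of fixed shape with $a$ as parameter, and the compactness construction of the $(S_i)$ can be carried out uniformly in $a$, so that $S_2$ is given by a fixed formula $\sigma_2(x,a)$. Then $\theta(x,y;a)$ has a fixed shape, and the bound $N$ witnessing failure of the order property for such a formula in a stable theory is indeed independent of the particular parameter value; your argument then goes through and even yields an explicit bound $n=3^N$. But this uniformisation step is precisely what is missing from your write-up.
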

\begin{proof}
Compactness.
\end{proof}

\begin{corollary}
$S$ has an idempotent.
\end{corollary}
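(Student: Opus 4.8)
The plan is to deduce this immediately from the preceding Proposition~\ref{T:inf-def-is-spr}. First I would note that, since $S$ is $\infty$-definable, the partial type $\bigwedge_i \varphi_i(x)$ is consistent, so $S$ is nonempty; pick any element $a \in S$. By Proposition~\ref{T:inf-def-is-spr}, $S$ is strongly $\pi$-regular, so there exists $n > 0$ such that $a^n$ lies in a subgroup $G$ of $S$. By the definition of a subgroup recalled in \Cref{ss:semigroups}, $G$ contains an idempotent $e \in G$ serving as its neutral element; in particular $e \in E(S)$, so $S$ has an idempotent.

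Alternatively, and even more directly, one can extract the idempotent from the proof of Proposition~\ref{T:inf-def-is-spr} itself: there we produced, for the chosen $a$, an idempotent $e = e_1 = e_2 = \cdots \in S$ with $a^k e = e a^k = a^k$, so the same $e$ witnesses the claim without re-invoking the full s$\pi$r statement. Either way there is no real obstacle here — the content is entirely in Proposition~\ref{T:inf-def-is-spr} — so the only point worth mentioning is the (trivial) nonemptiness of $S$, which is built into being $\infty$-definable.
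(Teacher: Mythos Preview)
Your proposal is correct and matches the paper's intent: the corollary is stated without proof because it is immediate from Proposition~\ref{T:inf-def-is-spr}, exactly as you argue (pick any $a\in S$, apply s$\pi$r, and take the identity of the resulting subgroup). Your remark about nonemptiness is a fair pedantic point but not something the paper bothers to mention.
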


\begin{remark}
In the notations of \Cref{sec:S_G(M)}, Newelski showed in \cite{Newelski-stable-groups} that $S_{G,\Delta}(M)$ is an $\infty$-definable semigroup in $M^{eq}$ and that it is s$\pi$r. \Cref{T:inf-def-is-spr}, thus, gives another proof.
\end{remark}

\subsection{A Counter Example}
It is known that every $\infty$-definable group inside a stable structure is an intersection of definable ones. It would be even better if every such semigroup were an intersection of definable semigroups. Milliet  showed that every $\infty$-definable semigroup inside a small structure is an intersection of definable semigroups \cite{on_enveloping}. In particular this is true for $\omega$-stable structures. So, for instance, any $\infty$-definable subsemigroup of $M_n(k)$ for $k\models ACF$ is an intersection of definable semigroups.
Unfortunately, this is not true already in the superstable case, as the following example will show.

\begin{example}\label{E:counter-exam}
Pillay and Poizat  give an example of an $\infty$-definable equivalence relation which is not an intersection of definable ones \cite{Pillay-Poizat}. This will give us our desired semigroup structure.

Consider the theory of a model which consists of universe $\mathbb{Q}$ (the rationals) with the unary predicates:
\[U_a=\{x\in\mathbb{Q}: x\leq a\}\] for $a\in\mathbb{Q}$.
The equivalence relation, $E$, is defined by
\[\bigwedge_{a<b} ((U_a(x)\to U_b(y))\wedge (U_a(y)\to U_b(x)).\]
It is an equivalence relation and in particular a preorder (reflexive and transitive).
Notice that it also follows that $E$ can't be an intersection of definable preorders. For if $E=\bigwedge R_i$ (for preorders $R_i$) then we also have \[E=\bigwedge (R_i\wedge\overline{R}_i)\]
where $x\overline{R}_iy=yR_ix$ (since $E$ is symmetric). But $R_i\wedge\overline{R}_i$ is a definable equivalence relation (the symmetric closure) hence trivial. So the $R_i$ are trivial.

Milliet  showed that in an arbitrary structure, every $\infty$-definable semigroup is an intersection of definable semigroups if and only if this is true for all $\infty$-definable preorders \cite{on_enveloping}. As a consequence in the above structure we can define an $\infty$-definable semigroup which will serve as a counter-example. Specifically, it will be the following semigroup:

If the preorder is on a set $X$, add a new element $0$ and add $0R0$ to the preorder. Define a semigroup multiplication on $R$:
\begin{equation*}
(a,b)\cdot (c,d) =
\begin{cases}
(a,d) & \text{if }b=c,\\
(0,0) & \text{else } 
\end{cases}
\end{equation*}

\begin{remark}
This example also shows that even "presumably well behaved" $\infty$-definable semigroups need not be an intersection of definable ones. In the example at hand the maximal subgroups are uniformly definable (each of them is finite) and the idempotents form a commutative semigroup.
\end{remark}
\end{example}

\subsection{Semigroups with Negative Partial Order}\label{Subsec:inf-def-with-neg-part}

We showed in \Cref{T:inf-def-is-spr} that every $\infty$-definable semigroup in a stable structure is strongly $\pi$-regular, hence the natural partial order on it has the following form:

For any $a,b\in S$,  $a\leq b$ if there exists $f,e\in E(S^1)$ such that $a=be=fb$.

\begin{remark}
Notice that this order generalizes the order on the idempotents.
\end{remark}

In a similar manner to what was done with the order of the idempotents, we have the following:

\begin{proposition}
$ $
\begin{enumerate}[(i)]
\item Every chain of elements with regard to the natural partial order is finite.
\item By compactness, the length of the chains is bounded.
\end{enumerate}
\end{proposition}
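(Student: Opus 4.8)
The plan is to mirror the argument already used for chains of idempotents (Proposition \ref{P:chain of idempotents}), transporting it from $E(S)$ to all of $S$ via the characterization of $\leq$ on s$\pi$r semigroups. First I would fix a definable superset $S \subseteq S_0$ over which associativity holds for products of, say, three elements, and over which we can freely multiply a bounded number of elements and still land in $S_0$ (this is routine by compactness, exactly as in the proof of Lemma \ref{L:technical-stuff-for-spr}). Then the relation ``$b$ covers $a$'', i.e. $\exists e,f \in E(S_0^1)\; (e^2 = e \wedge f^2 = f \wedge a = be = fb)$, is expressible by a single formula $\rho(x,y)$ with parameters in $S_0$, since ``$e \in E(S^1)$'' just means ``$e^2 = e$ or $e = 1$'' and the identity $1$ can be absorbed into a two-case formula. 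Note that because $S$ is s$\pi$r, the idempotents witnessing $a \leq b$ for $a,b \in S$ can be taken inside $S$ itself (by \ref{T:inf-def-is-spr} every element sits in some $eSe$), so $\rho$ restricted to $S \times S$ really does define the natural partial order on $S$.

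Next, suppose toward a contradiction that $S$ contains an infinite descending chain $b_0 > b_1 > b_2 > \cdots$ with respect to $\leq$. Since $\leq$ is a partial order, the pairs $(b_i, b_j)$ with $i < j$ satisfy $\rho$ while those with $i > j$ do not (if both $b_i \geq b_j$ and $b_j \geq b_i$ then $b_i = b_j$, contradicting strictness), so $\rho$ has the order property on $S_0$. But $S_0$ lives in a stable structure, so no formula has the order property — contradiction. Hence every $\leq$-chain in $S$ is finite, giving (i).

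For (ii): for each $n$, the statement ``there is a strictly descending $\leq$-chain of length $n$ in $S$'' is type-definable over the parameters defining $S$ and $S_0$ (an $n$-tuple $x_1, \dots, x_n$ in $S$ with $\rho(x_{j+1}, x_j)$ for each $j$, together with the negations $\neg\rho(x_j, x_{j+1})$ to force strictness — all first-order once $S_0$ is fixed, with the $\infty$-definability of $S$ handled by a partial type in the $x_j$). By part (i) this partial type is inconsistent for $n$ large enough; by compactness there is a uniform bound $N$ beyond which no such chain exists, which is exactly (ii).

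The only genuinely delicate point is the first one: checking that the natural partial order on $S$ (which a priori uses idempotents in the artificially adjoined monoid $S^1$, or in the ambient semigroup rather than in $S$) is cut out by a single formula over a definable hull $S_0$. This is where s$\pi$r is essential — it lets us replace ``$e,f \in E(S^1)$'' by genuine idempotents of $S$ that moreover can be found inside a fixed definable set, so that $\rho$ is an honest first-order formula and the order-property argument against stability goes through. Everything after that is the standard ``stable $\Rightarrow$ no infinite definable chains, plus compactness for uniformity'' package.
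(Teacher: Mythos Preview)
The paper itself gives no proof here beyond the phrase ``in a similar manner to what was done with the order of the idempotents'', so your write-up is already more detailed than the paper's. The overall strategy --- reduce to the order property and invoke stability, then compactness for the uniform bound --- is the intended one. There is, however, one genuine gap in your execution.

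You assert that the definable relation $\rho$ (which quantifies over idempotents of the definable hull $S_0$) coincides with the natural partial order $\leq$ when restricted to $S\times S$. Your justification, that the idempotents witnessing $a\leq b$ may be taken in $S$, only yields the inclusion ${\leq}\subseteq \rho\!\upharpoonright_{S\times S}$. The reverse inclusion is not clear: if $a,b\in S$ and $a=be=fb$ for idempotents $e,f\in S_0\setminus S$, nothing guarantees one can replace $e,f$ by idempotents of $S$. Consequently, from a strict chain $b_0>b_1>\cdots$ you do get $\rho(b_j,b_i)$ for $i<j$, but you cannot conclude $\neg\rho(b_i,b_j)$: the antisymmetry you appeal to is that of $\leq$, not of $\rho$, and $\rho$ need not be antisymmetric on $S$.

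The standard fix is short. The relation $\leq$ is type-definable on $S$: for $a\neq b$, the condition $a\leq b$ asserts the consistency of the partial type in $e,f$ saying $e,f\in S$, $e^2=e$, $f^2=f$, $a=be$, $a=fb$, which by compactness is an infinite conjunction $\bigwedge_k\theta_k(a,b)$ of formulas in $a,b$. Given an infinite strict chain, pass to an indiscernible sequence with the same EM-type; it is still a strict $\leq$-chain in $S$. On this sequence, the failure of $b'_i\leq b'_j$ for $i<j$ is witnessed by a single $\theta_{k_0}$ (by indiscernibility the same one for every pair), and that $\theta_{k_0}$ then has the order property, contradicting stability. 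Part~(ii) follows from~(i) by compactness exactly as you say.
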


\begin{definition}
We'll say that a semigroup $S$ is \emph{negatively ordered} with respect to the partial order if \[a\cdot b\leq a,b\]
for all $a,b\in S$.
\end{definition}

\begin{example}
A commutative idempotent semigroup (a (inf)-semilattice) is negatively ordered. 
\end{example}

Negativily ordered semigroups were studied by Maia and Mitsch \cite{NegPart}. We'll only need the definition.

\begin{proposition}\label{P:in negative_order_n+1_is_n}
Let $S$ be a negatively ordered semigroup. Assume that the length of chains is bounded by $n$, then any product of $n+1$ elements is a product of $n$ of them.
\end{proposition}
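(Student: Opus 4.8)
The plan is to take an arbitrary product $a_1 \cdot a_2 \cdots a_{n+1}$ of $n+1$ elements of $S$ and to produce from it a chain in the natural partial order of length $n+2$, unless some prefix already collapses; since chains are bounded by $n$, the collapse must occur, and that collapse is exactly what lets us rewrite the product as a product of $n$ elements. Concretely, set $b_0 = a_1$ and $b_k = a_1 \cdot a_2 \cdots a_{k+1}$ for $0 \le k \le n$, so $b_n$ is the product we care about. Because $S$ is negatively ordered we have, for each $k \ge 1$, that $b_k = b_{k-1} \cdot a_{k+1} \le b_{k-1}$, so
\[
b_0 \ge b_1 \ge b_2 \ge \cdots \ge b_n
\]
is a descending chain. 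It has $n+1$ terms, so it cannot be strictly descending (a strictly descending chain of $n+1$ terms has length $n+1 > n$); hence $b_{k-1} = b_k$ for some $1 \le k \le n$.

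The second step is to exploit such an equality $b_{k-1} = b_k$, i.e. $a_1 \cdots a_k = a_1 \cdots a_k \cdot a_{k+1}$, to shorten the full product. The idea is that once a partial product is idempotent-stable under right multiplication by $a_{k+1}$, we can iterate: from $b_{k-1} = b_{k-1} \cdot a_{k+1}$ we would like to conclude $b_{k-1} \cdot a_{k+1} \cdots a_{k+1} = b_{k-1}$ — but that is not quite what we need, since the remaining factors are $a_{k+2}, \dots, a_{n+1}$, not repeated copies of $a_{k+1}$. So instead I would argue directly: from $b_{k-1} = b_{k-1} a_{k+1}$ we get
\[
a_1 \cdots a_{n+1} = b_{k-1} \cdot a_{k+1} a_{k+2} \cdots a_{n+1} = (b_{k-1} a_{k+1}) \cdot a_{k+2} \cdots a_{n+1},
\]
and since $b_{k-1} a_{k+1} = b_{k-1} = a_1 \cdots a_k$, the right-hand side equals $a_1 \cdots a_k \cdot a_{k+2} \cdots a_{n+1}$, which is a product of $n$ elements (we have dropped the factor $a_{k+1}$). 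That is precisely the desired conclusion.

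I expect the main subtlety to be purely bookkeeping at the two boundary cases: $k = 1$ (where $b_0 = a_1$ has no "product" structure, but the argument $a_1 = a_1 a_2$ implies $a_1 \cdots a_{n+1} = a_1 a_3 \cdots a_{n+1}$ still goes through) and the possibility $n = 0$ (vacuous or degenerate, and one should note $S$ is then trivial or a single idempotent). One must also be slightly careful that "length of chains bounded by $n$" is being used in the sense that any chain has at most $n+1$ elements, or at most $n$ strict inequalities — I would fix the convention explicitly at the start and make the counting in the first step match it. Apart from that, associativity (available since $S$ is a semigroup) is all that is needed, and no stability input is required beyond the already-granted boundedness of chains.
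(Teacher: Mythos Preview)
Your proposal is correct and follows essentially the same argument as the paper: form the descending chain of left partial products $a_1 \ge a_1 a_2 \ge \cdots \ge a_1\cdots a_{n+1}$ via negative ordering, use the chain bound to force an equality $a_1\cdots a_i = a_1\cdots a_{i+1}$, and drop the redundant factor. The paper's version is terser and omits the bookkeeping you spell out (boundary cases, the convention on chain length), but the idea is identical.
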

\begin{proof}
Let $a_1\cdot\dotsc\cdot a_{n+1}\in S$. Since $S$ is negatively ordered, 
\[a_1\cdot \dotsc \cdot a_{n+1}\leq a_1\cdot\dotsc\cdot a_n\leq\dots\leq a_1\cdot a_2\leq a_1.\]
Since $n$ bounds the length of chains, we must have 
\[a_1\cdot\dotsc\cdot a_i=a_1\cdot\dotsc\cdot a_{i+1}\]
for a certain $1\leq i\leq n$.
\end{proof}

This property is enough for an $\infty$-definable semigroup to be contained inside a definable one.

\begin{proposition}\label{P:product_of_n+1_is_n_is_inside_def}
Let $S$ be an $\infty$-definable semigroup (in any structure). If every product of $n+1$ elements in $S$ is a product of $n$ of them, then $S$ is contained inside a definable semigroup. Moreover, $S$ is an intersection of definable semigroups.
\end{proposition}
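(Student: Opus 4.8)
The plan is to use compactness to produce a definable set $S_0 \supseteq S$ on which a suitably truncated version of the multiplication is defined, and then to define the ambient semigroup as the set of all $n$-fold products of elements of $S_0$, equipped with an operation that "normalizes" longer products back down to $n$-fold products. The hypothesis that every product of $n+1$ elements of $S$ is already a product of $n$ of them is precisely what makes this normalization consistent after passing to a small enough definable overset.

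First I would write $S = \bigwedge_{i\in I}\varphi_i(x)$ with the $\varphi_i$ closed under finite conjunction, and by compactness (applied to the associativity axiom and to the partial function "multiplication") choose a definable $S \subseteq S_0$ on which we may associatively multiply up to, say, $2n+2$ elements and stay inside some fixed definable superset. Next, again by compactness, I would exploit the hypothesis: for each $i$, the statement "for all $x_1,\dots,x_{n+1} \in S$ there exist $y_1,\dots,y_n \in S$ with $x_1\cdots x_{n+1} = y_1\cdots y_n$ and $\varphi_i$ holds of the $y_j$" holds on $S$, so by compactness there is a definable $S_1$ with $S \subseteq S_1 \subseteq S_0$ on which the analogous collapsing statement holds with $\varphi_i$ replaced by membership in $S_1$ (shrinking $S_1$ finitely many times, or rather taking an appropriate element of the directed system; one should be slightly careful and iterate to get a cofinal chain $S_0 \supseteq S_1 \supseteq S_2 \supseteq \cdots$ with $S = \bigcap S_k$ such that each $S_{k}$ collapses $(n+1)$-fold products into $n$-fold products of elements of $S_{k-1}$). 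Then define $T_k = \{ x_1\cdots x_n : x_1,\dots,x_n \in S_k\}$, a definable set, with $S \subseteq T_k$ because $S$ is closed under multiplication (write $a = a\cdot a\cdots a$ absorbing extra factors is not available, but $a = a \cdot e \cdots$ is also not available; instead note $S\subseteq T_k$ follows since any $a\in S$ equals an $n$-fold product after the collapsing hypothesis is applied to $a\cdot a\cdots a$ — here one uses that $S$ itself is closed under the original multiplication, so $a^{n+1}\in S$, hence $a^{n+1}$, and more to the point any single $a$, lies in the image; more robustly, observe $a = a^{n+1}\cdot b$ is not needed — simply use that the collapsing statement on $S$ already gives every element of $S$, being an $n$-fold product of copies of itself in the degenerate sense, belongs to $T_k$, after noting $a = a\cdots a$ requires idempotency; the cleanest route is: $S$ is s$\pi$r is not assumed here, so instead just take $S_0$ large enough that $S \subseteq T_0$ directly, which holds since each $a\in S$ is trivially the "product" $a$ of length $1$ and we pad using a right/left absorbing trick only if available — safest is to build $T_k$ as the set of products of \emph{at most} $n$ elements of $S_k$, which clearly contains $S$).

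On $T = \bigcap_k T_k$ I would define multiplication: given $\alpha, \beta \in T$ represented as products of $\leq n$ elements of some $S_k$, their concatenation is a product of $\leq 2n$ elements of $S_k$, hence (iterating the collapsing hypothesis $n$ times, which is legitimate since we allowed products of up to $2n+2$ elements in $S_{k-1},\dots$) a product of $\leq n$ elements of $S_{k-n}$, i.e. an element of $T_{k-n}$; intersecting over $k$ gives a well-defined element of $T$. One must check this is well-defined (independent of the representation and of $k$) and associative — this follows from associativity in $S_0$ together with the fact that the collapsing procedure, being just repeated application of the ambient multiplication, commutes with regrouping — and that it restricts to the given multiplication on $S$. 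Finally, since $T = \bigcap_k T_k$ is an intersection of definable \emph{semigroups} (each $T_k$, defined the same way over $S_k$, is closed under the collapsing multiplication by the same argument, using the chain $S_k \supseteq S_{k+1} \supseteq \cdots$), this simultaneously gives both conclusions.

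The main obstacle I anticipate is the bookkeeping around \emph{well-definedness and associativity of the collapsing multiplication}: the collapse of an $(n+j)$-fold product to an $n$-fold product is highly non-canonical, so one cannot define $\alpha\cdot\beta$ by "collapsing a chosen representative." The fix is to define $\alpha\cdot\beta$ not as a single collapsed product but as the \emph{actual element of the ambient structure} obtained by ordinary multiplication of the underlying tuples in $S_0$ — this element is canonical, lies in $T_{k-n}$ by the collapsing hypothesis, and the hypothesis is used only to certify membership in $T$, never to define the operation. With that reframing, associativity and the restriction to $S$ are immediate from the ambient associative multiplication, and the whole argument reduces to the (routine) compactness extraction of the descending chain $(S_k)$ with the required collapsing properties.
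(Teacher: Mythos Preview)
There is a real gap, and it stems from a misreading of the hypothesis. The assumption is that every product of $n+1$ elements is a product of $n$ \emph{of them} --- i.e.\ of $n$ among those same $n+1$ elements (cf.\ the proof of the preceding proposition, where one shows $a_1\cdots a_{i+1}=a_1\cdots a_i$ for some $i$). This is a purely \emph{universal} first-order condition on $S$: no existential quantifier over $S$ appears. Hence a single application of compactness gives a definable $S_1\supseteq S$ on which associativity holds for $\le 3n$-fold products \emph{and} every $(n+1)$-fold product of elements of $S_1$ already equals an $n$-fold product of those same elements, still in $S_1$. Your reading (``there exist $y_1,\dots,y_n\in S$'') turns this into a $\forall\exists$ statement over $S$, which is why you are forced into the descending chain $(S_k)$ with collapsing only into the previous level.

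That detour is not just a complication --- it breaks the conclusion. With your chain, the product of two elements of $T_k$ lands only in $T_{k-n}$, so no individual $T_k$ is a semigroup, and your $T=\bigcap_k T_k$ is merely $\infty$-definable. The sentence ``each $T_k$ \dots\ is closed under the collapsing multiplication by the same argument, using the chain $S_k\supseteq S_{k+1}\supseteq\cdots$'' does not hold: the tail of the chain below $S_k$ cannot certify closure of $T_k$ itself. The paper instead works with a single $S_1$ as above, sets $S_2=\{\,\text{products of }\le n\text{ elements of }S_1\,\}$, observes that $S_1\cdot S_2\subseteq S_2$ (one collapse suffices), and then takes the definable stabilizer $S_3=\{x\in S_2:xS_2\subseteq S_2\}$, which is a genuine definable semigroup containing $S$. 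Running this over a cofinal family of choices of $S_1$ then gives the ``intersection of definable semigroups'' clause. Your sketch can be repaired along exactly these lines once you use the correct (universal) form of the hypothesis.
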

\begin{proof}
Let $S\subseteq S_0$ be a definable set where the multiplication is defined. By compactness, there exists a definable subset $S\subseteq S_1\subseteq S_0$ such that 
\begin{itemize}
\item Any product of $\leq 3n$ elements of $S_1$ is an element of $S_0$;
\item Associativity holds for products of $\leq 3n$ elements of $S_1$;
\item Any product of $n+1$ elements of $S_1$ is already a product of $n$ of them.
\end{itemize}
Let \[S_1\subseteq S_2=\{x\in S_0: \exists y_1,\dots,y_n\in S_1\quad \bigvee_{i=1}^n x=y_1\cdot \dotsc\cdot y_i\}.\]
We claim that if $a\in S_1$ and $b\in S_2$ then $ab\in S_2$, indeed this follows from the properties of $S_1$. Define
\[S_3=\{x\in S_2:xS_2\subseteq S_2\}.\]
$S_3$ is our desired definable semigroup.

\end{proof}

As a consequence of these two Propositions, we have
\begin{proposition}
Every $\infty$-definable negatively ordered semigroup inside a stable structure is contained inside a definable semigroup. Furthermore, it is an intersection of definable semigroups.
\end{proposition}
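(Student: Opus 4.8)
The final statement combines \Cref{P:in negative_order_n+1_is_n} and \Cref{P:product_of_n+1_is_n_is_inside_def} almost directly, so the plan is mostly an assembly of results already in hand. Let $S$ be an $\infty$-definable negatively ordered semigroup inside a stable structure. First I would invoke \Cref{T:inf-def-is-spr} to conclude that $S$ is s$\pi$r, so that the natural partial order on $S$ takes the elegant form $a\leq b \Leftrightarrow a=be=fb$ for some $e,f\in E(S^1)$, and in particular extends the order on $E(S)$ and the statement ``$S$ is negatively ordered'' makes sense. Next, I would apply the boundedness of chains in the natural partial order — this is the stability input, exactly analogous to \Cref{P:chain of idempotents} but for the order on all of $S$: every $\leq$-chain is finite, and by compactness there is a uniform bound $n$ on the length of such chains.

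With that uniform bound $n$ in place, \Cref{P:in negative_order_n+1_is_n} gives that any product of $n+1$ elements of $S$ equals a product of $n$ of them. Then \Cref{P:product_of_n+1_is_n_is_inside_def} — which holds for $\infty$-definable semigroups in an arbitrary structure — immediately yields that $S$ is contained in a definable semigroup and, moreover, is an intersection of definable semigroups. So the proof is essentially: apply s$\pi$r $\Rightarrow$ bounded chains $\Rightarrow$ $(n{+}1)$-products are $n$-products $\Rightarrow$ definable envelope.

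The one point that requires a genuine (though routine) argument is the boundedness of $\leq$-chains, i.e. the unnumbered proposition just before \Cref{P:in negative_order_n+1_is_n}. I would prove finiteness of chains by noticing that $a\leq b$ is an $\infty$-definable relation on $S\times S$ (using a definable associative-enough ambient set $S_0\supseteq S$ and the description $a=be=fb$, $e,f\in E(S^1)$, quantifying the idempotents over $S_0$), so an infinite strictly descending chain would, together with stability, violate the absence of the order property — or, more directly, reduce to \Cref{P:chain of idempotents}: if $a_1 > a_2 > \cdots$ then writing $a_i = a_1 e_i$ with $e_i\in E(S^1)$ one checks the $e_i$ may be taken to form a descending chain of idempotents, contradicting \Cref{P:chain of idempotents}. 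Compactness over all of $S$ (in a monster model, every elementary extension is again stable, hence again has this property) then upgrades finiteness to a uniform bound $n$. That is the only step with any content; everything after it is a direct citation of the two preceding propositions.

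Thus the main (and only real) obstacle is packaging the stability-theoretic finiteness/boundedness of the natural partial order cleanly; once that is granted, the conclusion is immediate from \Cref{P:in negative_order_n+1_is_n} and \Cref{P:product_of_n+1_is_n_is_inside_def}.
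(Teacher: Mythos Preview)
Your proposal is correct and matches the paper's approach exactly: the paper states this proposition with the one-line justification ``As a consequence of these two Propositions,'' meaning \Cref{P:in negative_order_n+1_is_n} and \Cref{P:product_of_n+1_is_n_is_inside_def}, with the bounded-chains proposition (stated just before, without proof, ``in a similar manner'' to \Cref{P:chain of idempotents}) as the stability input. Your additional discussion of how to prove the bounded-chains step is extra detail beyond what the paper provides, but the overall architecture is identical.
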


Since every commutative idempotent semigroup is negatively ordered we have the following Corollary,
\begin{corollary}\label{P:idemp_is_inside_definable}
Let $E$ be an $\infty$-definable commutative idempotent semigroup inside a stable structure, then $E$ is contained in a definable commutative idempotent semigroup. Furthermore, it is an intersection of definable ones.
\end{corollary}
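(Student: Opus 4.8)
The plan is to deduce this from the Proposition immediately preceding it, after a small refinement of that Proposition's proof. The starting point is the Example above: a commutative idempotent semigroup $E$ is negatively ordered, since every element of $E$ is idempotent and, for $a,b\in E$, one has $ab=a\cdot b=b\cdot a$, exhibiting $ab\le a$ and $ab\le b$. Thus $E$ is an $\infty$-definable negatively ordered semigroup in a stable structure, so by the preceding Proposition (equivalently, by combining the uniform bound on the length of chains for the natural partial order with \Cref{P:product_of_n+1_is_n_is_inside_def}) it is already contained in a definable semigroup and is an intersection of definable semigroups. What remains is to upgrade ``definable semigroup'' to ``definable \emph{commutative idempotent} semigroup''.

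For this I would go back inside the proof of \Cref{P:product_of_n+1_is_n_is_inside_def}. There one fixes a definable $S_0\supseteq E$ carrying the multiplication, uses compactness to find a definable $S_1$ with $E\subseteq S_1\subseteq S_0$ on which products of boundedly many elements are associative and every product of $n+1$ elements is already a product of $n$, and then forms $S_2=\{x\in S_0:\exists y_1,\dots,y_n\in S_1\ \bigvee_{i=1}^{n}x=y_1\cdots y_i\}$ and the definable semigroup $S_3=\{x\in S_2:xS_2\subseteq S_2\}\supseteq E$. The observation is that $S_1$ may be shrunk still further: $\{x:x^2=x\}$ is definable and contains $E$, and since $E\times E$ is contained in the definable set $\{(x,y):xy=yx\}$ while $E$ is $\infty$-definable, compactness produces a definable $S_1$ which still contains $E$, all of whose elements are idempotent and any two of which commute. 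Then any product of boundedly many elements of $S_1$ is again idempotent (rearrange the factors using only commutations between elements of $S_1$, and use $w^2=w$ factorwise) and any two such products commute; hence $S_2$, and therefore $S_3\subseteq S_2$, consists of pairwise-commuting idempotents, i.e.\ $S_3$ is a definable commutative idempotent semigroup containing $E$.

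The ``intersection'' clause is then obtained by running this construction once for each formula $\varphi_i$ in a defining set $E=\bigcap_i\{x:\varphi_i(x)\}$: at the first step choose $S_0^{(i)}$ with $E\subseteq S_0^{(i)}\subseteq\{x:\varphi_i(x)\}$ (possible by compactness, since $E$ is a semigroup contained in $\{\varphi_i\}$), obtaining a definable commutative idempotent semigroup $S_3^{(i)}$ with $E\subseteq S_3^{(i)}\subseteq\{x:\varphi_i(x)\}$, whence $E=\bigcap_i S_3^{(i)}$. The only step requiring genuine care is the bookkeeping of constants in the middle paragraph: the length bound imposed on $S_1$ must be chosen large enough to accommodate not just the products occurring in $S_2$ and $S_3$ but also all the intermediate words that appear while commuting factors past one another to verify idempotency — a purely quantitative, compactness-friendly point, but the spot where a careless choice of constants would break the argument.
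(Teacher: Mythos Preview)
Your proposal is correct and follows essentially the same route as the paper: invoke negativity of the order on a commutative idempotent semigroup, apply the preceding Proposition, and then refine the proof of \Cref{P:product_of_n+1_is_n_is_inside_def} by shrinking $S_1$ via compactness so that its elements are pairwise-commuting idempotents. The paper's own proof is a one-line sketch of exactly this idea, so your version simply spells out the details (including the bookkeeping caveat) that the paper leaves implicit.
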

\begin{proof}
We only need to show that the definable semigroup containing $E$ can be made commutative idempotent. For that to we need to  demand that all the elements of $S_1$ (in the proof of \Cref{P:product_of_n+1_is_n_is_inside_def}) be idempotents and that they commute, but that can be satisfied by compactness.
\end{proof}

\subsection{Clifford Monoids}\label{Subsec:Cliff} 
We assume that $S$ is an $\infty$-definable Clifford semigroup (see \Cref{subsec:Clifford}) inside a stable structure.

The simplest case of Clifford semigroups, commutative idempotent semigroups (semilattices) were considered in \Cref{Subsec:inf-def-with-neg-part}.

Understanding the maximal subgroups of a semigroup is one of the first steps when one wishes to understand the semigroup itself. Lemma \ref{L:max_subgrps_are_inf_definable} is useful and will be used implicitly.

Recall that every Clifford semigroup is a strong semilattice of groups. Between each two maximal subgroups, $G_e$ and $G_{ef}$ there exists a homomorphism $\phi_{e,ef}$ given by multiplication by $f$.

\begin{definition}
By a \emph{surjective Clifford monoid} we mean a Clifford monoid $M$ such that for every $a\in M$ there exist $g\in G(M)$ and $e\in E(M)$ such that $a=ge$.

Surjectivity refers to the fact that these types of Clifford monoids are exactly the ones with $\phi_{e,ef}$ surjective.
\end{definition}

We restrict ourselves to $\infty$-definable surjective Clifford monoids. 
\begin{theorem}\label{T:Cliff_with_surj}
Let $M$ be an $\infty$-definable surjective Clifford monoid in a stable structure. Then $M$ is contained in a definable monoid, extending the multiplication on $M$. This monoid is also a surjective Clifford monoid. Furthermore, every such monoid is an intersection of definable surjective Clifford monoids.
\end{theorem}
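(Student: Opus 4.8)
The plan is to represent $M$ concretely as the set of products $g\cdot e$ with $g$ in the unit group $G:=G(M)$ and $e$ in the semilattice $E:=E(M)$ of idempotents, and then to enlarge $G$ and $E$ to definable objects while making sure the two features that render the multiplication of a Clifford monoid transparent survive the passage: idempotents are central, and the expression $a=g\cdot e$ already determines $e$. Fix a definable $M\subseteq M_0$ on which $\cdot$ is a total binary function, associative on $M$; by compactness fix a definable $M\subseteq M_1\subseteq M_0$ so that products of boundedly many elements of $M_1$ lie in $M_0$ and all bracketings agree there. Since $M$ is a surjective Clifford monoid, $M=\{g\cdot e:g\in G,\ e\in E\}$, and since idempotents are central in $M$ one has $(g_1e_1)(g_2e_2)=(g_1g_2)(e_1e_2)$ for $g_i\in G$, $e_i\in E$.

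First I would enlarge the ingredients. The unit group $G=G_1$ is a maximal subgroup of $M$, hence relatively definable in $M$ by \Cref{L:max_subgrps_are_inf_definable}; being an $\infty$-definable group in a stable structure it is contained in a definable group $G^+\subseteq M_0$ whose operation is the restriction of $\cdot$, and $G=\bigcap_j G^j$ for a family of definable subgroups of $G^+$, directed under inclusion and all with identity $1$ (this is the embedding fact already used in the proof of \Cref{L:max_subgrps_are_inf_definable}, see \cite{unidim}). The set $E=\{x\in M:x\cdot x=x\}$ is an $\infty$-definable commutative idempotent semigroup, so by \Cref{P:idemp_is_inside_definable} it lies in a definable commutative idempotent semigroup and equals the intersection of a directed family of such; after intersecting with $\{x:1\cdot x=x\cdot 1=x\}$ we may take these to be commutative idempotent monoids $E^+\supseteq E^k$ with identity $1$ and $E=\bigcap_k E^k$. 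Finally, since idempotents are central in $M$, compactness lets us pass to a cofinal subfamily of indices so that $G^j\cup E^k\subseteq M_1$ and $e\cdot g=g\cdot e$ for all $e\in E^k$, $g\in G^j$.

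Then, for each pair $(j,k)$, set $M^{j,k}:=\{g\cdot e:g\in G^j,\ e\in E^k\}\subseteq M_0$ and $N^{j,k}_e:=\{g\in G^j:g\cdot e=e\}$. Using the arranged associativity and centrality one checks that each $N^{j,k}_e$ is a normal subgroup of $G^j$, that $N^{j,k}_1=\{1\}$, and that $N^{j,k}_f\subseteq N^{j,k}_e$ whenever $e\cdot f=e$; the crucial point is that $g_1\cdot e_1=g_2\cdot e_2$ forces $e_1=e_2$ --- multiplying on the left by $g_i^{-1}$ gives $e_i=g_i^{-1}\cdot(g_i e_i)$, and then idempotency of $e_2$ combined with centrality and the group law of $G^j$ collapses $e_2$ onto $e_1$. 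It follows that $g_1\cdot e_1=g_2\cdot e_2$ iff $e_1=e_2$ and $g_1^{-1}g_2\in N^{j,k}_{e_1}$, whence the rule $(g_1e_1)\ast(g_2e_2):=(g_1g_2)(e_1e_2)$ defines (normality of the fibres giving well-definedness) an associative operation on the definable set $M^{j,k}$, with identity $1$, restricting to $\cdot$ on $M$. One then identifies $(M^{j,k},\ast)$: its idempotents are precisely the elements of $E^k$, hence form a commutative subsemigroup, and a one-line computation shows they are central; its units are precisely the elements of $G^j$; each element factors as $g\cdot e=(g\cdot 1)\ast(1\cdot e)$, a unit times an idempotent; and $g\cdot e\mapsto g^{-1}\cdot e$ witnesses regularity. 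So $M^{j,k}$ is an inverse monoid with central idempotents --- a surjective Clifford monoid --- containing $M$ and extending its multiplication. Taking any single $M^{j_0,k_0}$ yields the first two assertions, and since the $M^{j,k}$ form a directed family of such monoids with mutually compatible multiplications, saturation gives $\bigcap_{j,k}M^{j,k}=\{g\cdot e:g\in\bigcap_j G^j,\ e\in\bigcap_k E^k\}=\{g\cdot e:g\in G,\ e\in E\}=M$.

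I expect the main obstacle to be exactly the transfer performed in the previous step: it is the compactness arrangement that forces centrality between $G^j$ and $E^k$, and it is the ``$a=g\cdot e$ remembers $e$'' lemma that makes $\ast$ well defined, turns the sets $N^{j,k}_e$ into genuine normal subgroups, and --- together --- makes each $M^{j,k}$ Clifford rather than merely a regular monoid. Once these are secured, the remaining points (associativity of $\ast$, the precise descriptions of $E(M^{j,k})$ and $G(M^{j,k})$, compatibility of the various $\ast$, and the collapse of the intersection to $M$) are routine bookkeeping.
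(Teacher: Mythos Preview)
Your argument is correct and follows essentially the same line as the paper: pass to a definable ambient $M_1$ with bounded associativity and idempotent-centrality, enlarge the unit group and $E(M)$ to a definable group $G\subseteq M_1$ and a definable commutative idempotent semigroup $E\subseteq M_1$, and set the new monoid to be $\{g\cdot e:g\in G,\ e\in E\}$. The one substantive difference is a detour: you introduce a separate operation $\ast$ and labor over its well-definedness via the ``$a=ge$ remembers $e$'' lemma and normality of the fibres $N^{j,k}_e$, but in fact $(g_1e_1)\cdot(g_2e_2)=(g_1g_2)(e_1e_2)$ already holds for the ambient multiplication (by $4$-fold associativity and the arranged centrality in $M_1$), so $\ast$ coincides with the restriction of $\cdot$ and well-definedness comes for free --- the paper exploits exactly this and simply takes $M_2$ as a subset of $M_0$ closed under the existing product. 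Your explicit verification that each $M^{j,k}$ is a surjective Clifford monoid, and your direct construction of the intersecting family, are useful elaborations of what the paper leaves as ``a standard corollary of the above proof''.
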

\begin{proof}
Let $M\subseteq M_0$ be a definable set where the multiplication is defined.
By compactness, there exists a definable subset $M\subseteq M_1\subseteq M_0$
such that 
\begin{itemize}
\item Associativity holds for $\leq 6$ elements of $M_1$;
\item Any product of $\leq 6$ elements of $M_1$ is in $M_0$;
\item $1$ is a neutral element of $M_1$;
\item If $x$ and $y$ are elements of $M_1$ with $y$ an idempotent then $xy=yx$.
\end{itemize}
By the standard argument for stable groups, there exists a definable group \[G_1\subseteq G\subseteq M_1,\] where $G_1\subseteq M$ is the maximal subgroup of $M$ associated with the idempotent $1$.
By \Cref{P:idemp_is_inside_definable}, there exists a definable commutative idempotent semigroup $E(M)\subseteq E\subseteq M_1$.
Notice that for every $g\in G$ and $e\in E$, \[ge=eg.\]
Define \[M_2=\{m\in M_0:\exists g\in G,e\in E\quad m=ge\}.\]
$M_2$ is the desired monoid.

The furthermore is a standard corollary of the above proof.
\end{proof}

\begin{remark}\label{C:Cliff_is_inter}
As before, it follows from the proof that any such monoid is an intersection of definable surjective Clifford monoids.
\end{remark}

We don't have an argument for Clifford monoids which are not necessarily surjective. But we do have a proof for a certain kind of inverse monoids. We'll need this result in \Cref{sec:S_G(M)}.

\begin{theorem}\label{T:Needed_for_1_based}
Let $M$ be an $\infty$-definable monoid in a stable structure, such that
\begin{enumerate}
\item Its unit group $G$ is definable,
\item $E(M)$ is commutative, and 
\item for every $a\in M$ there exist $g\in G$ and $e\in E(M)$ such that \[a=ge.\]
\end{enumerate}
Then $M$ is contained in a definable monoid, extending the multiplication on $M$. This monoid also has these properties.
\end{theorem}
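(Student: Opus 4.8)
The plan is to follow the proof of \Cref{T:Cliff_with_surj} closely, but with the conjugation action of the unit group $G$ playing the role that centrality of idempotents played there. Two observations drive everything. First, if $e\in E(M)$ and $g\in G$ then $(geg^{-1})^{2}=ge(g^{-1}g)eg^{-1}=ge^{2}g^{-1}=geg^{-1}$, so $G$ normalizes $E(M)$. Second, for $g_{1}e_{1},g_{2}e_{2}$ with $g_{i}\in G$ and $e_{i}\in E(M)$ we can rewrite
\[
g_{1}e_{1}g_{2}e_{2}=(g_{1}g_{2})\bigl((g_{2}^{-1}e_{1}g_{2})\,e_{2}\bigr),
\]
and since $g_{2}^{-1}e_{1}g_{2}\in E(M)$ and $E(M)$ is commutative, the second factor is again an idempotent. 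Thus the set of products $g\cdot e$ is closed under multiplication, and the formula also tells us how to multiply inside any definable monoid we build.

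First I would fix ambient definable data by compactness: a definable $M\subseteq M_{0}$ carrying the (partial) multiplication, and a definable $M\subseteq M_{1}\subseteq M_{0}$ for which $1$ is a two-sided identity and associativity holds for products of boundedly many elements (say six), such products landing in $M_{0}$. The unit group $G$ is definable by hypothesis, and we may assume $G\subseteq M_{1}$. Applying \Cref{P:idemp_is_inside_definable} inside $M_{1}$ yields a definable commutative idempotent semigroup $E(M)\subseteq E_{0}\subseteq M_{1}$.

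The key point — and the only genuinely new ingredient — is that $E_{0}$ need not be $G$-invariant, whereas the multiplication formula above requires conjugating idempotents by elements of $G$ and staying inside the chosen definable idempotent semigroup. So I would pass to
\[
E:=\{x\in E_{0}:\ gxg^{-1}\in E_{0}\ \text{for all}\ g\in G\},
\]
which is definable, contains $E(M)$ (by the normalization observation), is $G$-invariant, and is still a commutative idempotent semigroup: closure under multiplication follows from $g(xy)g^{-1}=(gxg^{-1})(gyg^{-1})$ together with closure of $E_{0}$, and idempotency and commutativity are inherited from $E_{0}$. Now set
\[
M_{2}:=\{m\in M_{0}:\ m=ge\ \text{for some}\ g\in G,\ e\in E\}.
\]
Using the displayed multiplication formula and the $G$-invariance of $E$, $M_{2}$ is closed under multiplication; associativity and neutrality of $1$ are inherited from $M_{1}$; and $M\subseteq M_{2}$ because $M=G\cdot E(M)$. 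So $M_{2}$ is a definable monoid extending the multiplication of $M$. Property (3) holds by construction. For (1): if $m=ge\in M_{2}$ is invertible in $M_{2}$ then so is $g^{-1}m=e$, an idempotent, hence $e=1$ and $m=g$; thus the unit group of $M_{2}$ is exactly the definable group $G$. For (2): if $m=ge\in M_{2}$ is idempotent, then $m^{2}=(g^{2})\bigl((g^{-1}eg)e\bigr)$, and cancelling $g$ in $m^{2}=m$ forces $ge=e$, i.e.\ $m=e\in E$; hence $E(M_{2})=E$ is commutative. Finally, iterating the construction with successively smaller $M_{0}$, as in the surjective case, exhibits $M$ as an intersection of definable monoids enjoying the same three properties.

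I expect the main obstacle to be precisely this $G$-invariance issue: one cannot simply quote \Cref{P:idemp_is_inside_definable} and proceed, since the multiplication on $M_{2}$ conjugates idempotents by elements of $G$; cutting $E_{0}$ down to its $G$-invariant part and re-verifying that what remains is still a commutative idempotent subsemigroup of $M_{1}$ is the one real difference from the proof of \Cref{T:Cliff_with_surj}. The rest — tracking how many elements of $M_{1}$ occur in each product so that the pre-arranged partial associativity applies — is the same routine bookkeeping already present there.
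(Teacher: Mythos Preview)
Your approach is essentially the same as the paper's: after invoking \Cref{P:idemp_is_inside_definable} to get a definable commutative idempotent semigroup $E_{0}\supseteq E(M)$, you cut it down to its $G$-invariant part $E$ (the paper writes $E_{1}=\{e\in E_{0}:\forall g\in G\ g^{-1}eg\in E_{0}\}$) and set $M_{2}=G\cdot E$, with closure witnessed by the identity $g_{1}e_{1}\cdot g_{2}e_{2}=(g_{1}g_{2})\bigl((g_{2}^{-1}e_{1}g_{2})e_{2}\bigr)$, which is exactly the paper's argument. One small slip in the extra verification you supply (which the paper omits): for property~(2), cancelling $g$ in $m^{2}=m$ with $m=ge$ yields $ege=e$, not $ge=e$, so the claim $E(M_{2})=E$ needs more care.
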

\begin{remark}
Incidentally $M$ is an inverse monoid (recall the definition from Section \ref{subsec:Clifford}). It is obviously regular and the pseudo-inverse is unique since the idempotents commute (see the preliminaries). Also, as before, every such monoid is an intersection of definable ones.
\end{remark}
\begin{proof}
Let $M\subseteq M_0$ be a definable set where the multiplication if defined and associative.
By compactness, there exists a definable subset $M\subseteq M_1\subseteq M_0$
such that 
\begin{itemize}
\item Associativity holds for $\leq 6$ elements of $M_1$;
\item Any product of $\leq 6$ elements of $M_1$ is in $M_0$;
\item $1$ is a neutral element of $M_1$;
\item If $x$ and $y$ are idempotents of $M_1$ then $xy=yx$.
\end{itemize}
By Proposition \ref{P:idemp_is_inside_definable}, there exists a definable commutative idempotent semigroup $E(M)\subseteq E\subseteq M_1$.

Let \[E_1=\{e\in E: \forall g\in G\: g^{-1}eg\in E\}.\]
$E_1$ is still a definable commutatve idempotent semigroup that contains $E(M)$. Moreover for every $e\in E_1$ and $g\in G$, \[g^{-1}eg\in E_1.\]
Define \[M_2=\{m\in M_0:\exists g\in G,e\in E_1\quad m=ge\}.\]
$M_2$ is the desired monoid. Indeed if $g,h\in G$ and $e,f\in E_1$ then there exist $h^\prime\in G$ and $e^\prime\in E_1$ such that 
\[eh=h^\prime e^\prime\]
thus \[ge\cdot hf=gh^\prime\cdot e^\prime f.\]
\end{proof}

\section{The space of types $S_G(M)$ on a definable group}\label{sec:S_G(M)}
Let $G$ be a definable group inside a stable structure $M$. Assume that $G$ is definable by a formula $G(x)$. Define 
$S_G(M)$ to be all the types of $S(M)$ which are on $G$.

\begin{definition}\label{D:of product}
Let $p,q\in S_G(M)$, define
\[p\cdot q=tp(a\cdot b/M),\]
where $a\models p, b\models q$ and $a\forkindep[M]b$.

\end{definition}
Notice that the above definition may also be stated in the following form:
\[U\in p\cdot q\Leftrightarrow  d_q(U)\in p.\]
where $U$ is a formula and $d_q(U):=\{g\in G(M) :g^{-1}U\in q\}$ \cite{Newelski-stable-groups}. Thus, if $\Delta$ is a finite family of formulae, in order to restrict the multiplication to $S_{G,\Delta}(M)$, the set of $\Delta$-types on $G$, we'll need to consider invariant families of formulae:

\begin{definition}
Let $\Delta\subseteq L$ be a finite set of formulae. We'll say that $\Delta$ is ($G$-)invariant if the family of subsets of $G$ definable by instances of formulae from $\Delta$ is invariant under left and right translation in $G$.
\end{definition}

From now on, unless stated otherwise, we'll assume that $\Delta$ is a finite set of invariant formulae. For $\Delta_1\subseteq \Delta_2$ let \[r^{\Delta_2}_{\Delta_1}:S_{G,\Delta_2}(M)\to S_{G,\Delta_1}(M)\] be the restriction map. These are semigroup homomorphisms. Thus \[S_G(M)=\varprojlim_{\Delta}S_{G,\Delta}(M).\]
In \cite{Newelski-stable-groups} Newelski shows that $S_{G,\Delta}(M)$ may be interpreted in $M^{eq}$ as an $\infty$-definable semigroup. Our aim is to show that these $\infty$-definable semigroups are in fact an intersection of definable ones and as a consequence that $S_G(M)$ is an inverse limit of definable semigroups of $M^{eq}$.

\subsection{$S_{G,\Delta}$ is an intersection of definable semigroups}

Let $\varphi(x,y)$ be a $G$-invariant formula. 
The proof that  $S_{G,\varphi}(M)$ is interpretable as an $\infty$-definable semigroup in $M^{eq}$ is given by Newelski in \cite{Newelski-stable-groups}. We'll show that it may be given as an intersection of definable semigroups.

\begin{proposition}\cite{Pillay}\label{P:def-in-stab}
There exists $n\in\mathbb{N}$ and a formula $d_\varphi(y,u)$ such that for every $p\in S_{G,\varphi}(M)$ there exists a tuple $c_p\subseteq G$ such that
\[d_\varphi(y,c_p)=(d_px)\varphi (x,y).\]
Moreover, $d_\varphi$ may be chosen to be a positive boolean combination of $\varphi$-formulae.
\end{proposition}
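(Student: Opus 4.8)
\noindent The plan is to read this off the classical theory of definability of types in stable structures (as in \cite{Pillay}); the only extra point is that, because $p$ is a $\varphi$-type concentrated on $G$, the parameters appearing in its $\varphi$-definition can be chosen inside $G$.

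Since $T=\mathrm{Th}(M)$ is stable, every complete $\varphi$-type over a model is definable; the substantive input is that the definition is \emph{uniform} and can be put in \emph{positive} form. Given $p$ over $M$, I would pass to its global heir $p'$ (the unique global $\varphi$-type extending $p$ that is definable over $M$; its $\varphi$-definition is the same formula $(d_px)\varphi(x,y)$, now read globally) and take a Morley sequence $(a_1,\dots,a_n)$ in $p'$ over $M$. A counting argument along this sequence, using finiteness of the local $\varphi$-rank, produces $n=n(\varphi)$ and a ``majority vote'' formula
\[
 d_\varphi(y,u_1,\dots,u_n)\ :=\ \bigvee_{|I|>n/2}\ \bigwedge_{i\in I}\varphi(u_i,y),
\]
a positive Boolean combination of instances of $\varphi$, such that $(d_px)\varphi(x,y)$ is equivalent to $d_\varphi(y,a_1,\dots,a_n)$. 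The point is that $n$ and $d_\varphi$ depend only on $\varphi$ (not on $p$ or $M$), and that the parameter tuple consists of realizations of $p'$. One can either cite this directly as the uniform positive definability of types, or reprove it by the standard argument: if $\varphi(a_i,b)$ failed for too large a set of indices $i$ one would build an alternation of $\varphi$ of forbidden length.

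It then remains to place the parameter inside $G$. Since $p$ is a $\varphi$-type on $G$, we may lift it to a complete type on $G$ and choose the Morley sequence among realizations of that type, which lie in $G$; setting $c_p:=(a_1,\dots,a_n)\subseteq G$ then gives $d_\varphi(y,c_p)=(d_px)\varphi(x,y)$ with $d_\varphi$ a positive Boolean combination of $\varphi$-formulae, as required.

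The main obstacle is the positivity clause. Knowing that the $\varphi$-definition can be realised as a \emph{positive} combination of instances of $\varphi$, rather than an arbitrary Boolean combination, is the one genuinely stability-theoretic fact here, and it is exactly what forces the majority-vote shape; uniformity of $n$ and $d_\varphi$ across all $p$ (and all $M$) then follows from the same rank bound, or by compactness from the finite $\varphi$-multiplicity. A mild technical wrinkle is that $M$ need not be saturated enough to contain a full Morley sequence of $p'$; this is circumvented by passing to a saturated elementary extension, or by working inside $M^{eq}$, which is harmless for the use made of the statement in the interpretation of $S_{G,\varphi}(M)$.
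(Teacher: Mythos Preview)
The paper does not give its own proof of this proposition: it is quoted from \cite{Pillay} and used as a black box, so there is nothing to compare against at the level of argument. Your sketch is the standard proof of uniform positive definability of $\varphi$-types in a stable theory (Morley sequence plus majority vote), and it is correct; in particular the paper, in the lemma immediately following, takes $d_\varphi$ in the generic disjunctive-normal form $\bigvee_{i<n}\bigwedge_{j<n}\varphi(u_{ij},y)$, which your majority-vote formula is an instance of.

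One point deserves a cleaner treatment than your ``technical wrinkle'' paragraph gives it. The Morley sequence $(a_1,\dots,a_n)$ you produce lives in a saturated extension, not in $M$, so you have only shown that \emph{some} tuple in $G$ of the monster works. To land the parameter tuple in $G(M)$, argue directly by elementarity: since $(d_px)\varphi(x,y)$ is an $M$-definable set, the sentence
\[
\exists u_1,\dots,u_n\in G\ \forall y\ \bigl(d_\varphi(y,u_1,\dots,u_n)\leftrightarrow (d_px)\varphi(x,y)\bigr)
\]
has parameters in $M$ and holds in the monster, hence already in $M$; this yields $c_p\subseteq G(M)$ without any appeal to saturation of $M$ or to $M^{eq}$. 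With that adjustment your proof is complete.
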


Let $E_{d_\varphi}$ be the equivalence relation defined by 
\[c_1E_{d_\varphi} c_2 \Longleftrightarrow \forall y(d_\varphi (y,c_1)\leftrightarrow d_\varphi (y,c_2)).\]
Set $Z_{d_\varphi}:=\nicefrac{M}{E_{d_\varphi}}$, it is the sort of canonical parameters for a potential $\varphi$-definition.

\begin{remark}
We may assume that $c_p$ is the canonical parameter for $d_\varphi (M,c_p)$, namely, that it lies in $Z_{d_\varphi}$. Just replace the formula $d_\varphi(y,u)$ with the formula 
\[\psi(y,v)=\forall u \left((\pi(u)=v)\to d_\varphi(y,u)\right),\]
where $v$ lies in the sort $\nicefrac{M}{E_{d_\varphi}}$ and $\pi:M\to\nicefrac{M}{E_{d_\varphi}}$.
\end{remark}

Each element $c\in Z_{d_\varphi}$ corresponds to a complete (but not necessarily consistent) set of $\varphi$-formulae:

\[p^0_c:=\{\varphi(x,a): a\in M \text{ and } \models d_\varphi(a,c)\}\cup\] \[\{\neg\varphi(x,a): a\in M \text{ and } \not\models d_\varphi(a,c)\}.\]

\begin{remark}
Notice that $p^0_c$ may not be closed under equivalence of formulae, but the set of canonical parameters $c\in Z_{d_\varphi}$ such that $p^0_c$ is closed under equivalence of formulae is the definable set:
\[\{c\in Z_{d_\varphi}: \forall t_1 \forall t_2 \left(\varphi(x,t_1)\equiv \varphi(x,t_2)\to (d_\varphi(t_1,c)\leftrightarrow d_\varphi(t_2,c)\right)\}.\]
Thus we may assume that we only deal with sets $p^0_c$ which are closed under equivalence of formulae.
\end{remark}

The set of $c\in Z_{d_\varphi}$ such that $p^0_c$ is $k$-consistent is definable: \[Z_{d_\varphi}^k=\{c\in Z_{d_\varphi} : p^0_c \text{ is k-consistent}\}.\]
Define \[Z=\bigcap_{k< \omega} Z_{d_\varphi}^k.\]
There is a bijection ($p\mapsto c_p$) between $S_{G,\varphi}(M)$ and $Z$.

The following is a trivial consequence of \Cref{P:def-in-stab}:
\begin{lemma}
There exists a formula $\Phi(u,v,y)$ with $u,v$ in the sort $Z_{d_\varphi}$, such that \[\Phi(c_p,c_q,a) \Leftrightarrow \varphi(x,a)\in p\cdot q.\] Moreover, $\Phi$ is a positive boolean combination of $d_\varphi$-formulae (and so of $\varphi$-formulae as well).
\end{lemma}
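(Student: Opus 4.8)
The plan is to unwind Newelski's formula $\varphi(x,a)\in p\cdot q\Leftrightarrow d_q(\varphi(x,a))\in p$, where $d_q(\varphi(x,a))=\{g\in G(M):g^{-1}\varphi(x,a)\in q\}$, and to carry everything through the $\varphi$-definitions of \Cref{P:def-in-stab}. First I would record the uniform form of $G$-invariance of $\varphi$: a routine compactness argument (or the fact that canonical parameters of a definable family vary definably) produces $\emptyset$-definable functions $\sigma$ and $\eta$ such that, over $G$, $g^{-1}\varphi(x,a)$ agrees with $\varphi(x,\sigma(g,a))$, and such that for $h\in G$ the set $\{g\in G:\varphi(gh,a)\}$ agrees with $\{g\in G:\varphi(g,\eta(a,h))\}$. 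The first identity, together with the defining property of $c_q$ and the assumption (made in the remarks above) that $p^0_c$ is closed under equivalence of formulae, gives $g^{-1}\varphi(x,a)\in q\Leftrightarrow\ \models d_\varphi(\sigma(g,a),c_q)$; hence $d_q(\varphi(x,a))$ is exactly the subset of $G$ defined, in the variable $g$, by $d_\varphi(\sigma(g,a),c_q)$.

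Next I would expand $d_\varphi$. By \Cref{P:def-in-stab} we may take $d_\varphi(y,u)=\rho(\varphi(u_1,y),\dots,\varphi(u_m,y))$ for a positive boolean function $\rho$, with the parameters $u_k$ in the sort where $x$ ranges, which is $G$ (the $\varphi$-definition of $q$ may be taken with parameters realizing $q$). Substituting, each conjunct $\varphi((c_q)_k,\sigma(g,a))$ is, as a condition on $g$, equivalent to $\varphi(g\cdot (c_q)_k,a)$, hence — by the second invariance identity — to $\varphi(g,\eta(a,(c_q)_k))$. So $d_q(\varphi(x,a))$ is the subset of $G$ defined in $g$ by the positive boolean combination $\rho\big(\varphi(g,\eta(a,(c_q)_1)),\dots,\varphi(g,\eta(a,(c_q)_m))\big)$ of $\varphi$-instances — the point being that translating a $\varphi$-instance on $G$ by a group element returns a $\varphi$-instance and does not disturb the boolean shape $\rho$. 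Finally, since $p$ is a complete $\varphi$-type and $\rho$ is positive (built from $\wedge,\vee$), membership in $p$ of such a combination equals $\rho$ evaluated at the truth values $[\varphi(g,t_k)\in p]=[\,\models d_\varphi(t_k,c_p)]$; taking $t_k=\eta(a,(c_q)_k)$ this yields
\[\varphi(x,a)\in p\cdot q\ \Longleftrightarrow\ \rho\big(d_\varphi(\eta(a,(c_q)_1),c_p),\dots,d_\varphi(\eta(a,(c_q)_m),c_p)\big),\]
from which one reads off $\Phi(u,v,y):=\rho\big(d_\varphi(\eta(y,(v)_1),u),\dots,d_\varphi(\eta(y,(v)_m),u)\big)$: a positive boolean combination of $d_\varphi$-formulae, and — expanding $d_\varphi$ — of $\varphi$-formulae.

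There is no genuinely hard step here; \Cref{P:def-in-stab} does the work and the rest is the bookkeeping just described. Two points need a little care. The first is the uniformity of the two applications of $G$-invariance, i.e. the existence of the $\emptyset$-definable functions $\sigma$ and $\eta$; this is standard. The second is sort hygiene: the canonical parameters $c_p,c_q$ are meant to lie in $Z_{d_\varphi}$, whereas the computation above is cleanest with honest representatives in $G$, so in the displayed formula for $\Phi$ one inserts a quantification over representatives — exactly as in the remarks above, replacing $d_\varphi(y,u)$ by $\forall w\,(\pi(w)=u\to d_\varphi(y,w))$ — which does not affect the conclusion on $Z$.
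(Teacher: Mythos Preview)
Your proof is correct and follows essentially the same route as the paper: unwind $\varphi(x,a)\in p\cdot q\Leftrightarrow d_q(\varphi(x,a))\in p$, express $d_q(\varphi(x,a))$ as a positive boolean combination of $\varphi$-instances via the $\varphi$-definition of $q$, then apply the $\varphi$-definition of $p$ term-by-term. The only presentational difference is that the paper, rather than introducing abstract reparametrisation functions $\sigma,\eta$, simply assumes from the outset that $\varphi(x,y)$ has the shape $\varphi(l\cdot x\cdot r,y)$ (i.e.\ builds the left/right translation parameters into the formula itself), and writes $d_\varphi$ explicitly in $\bigvee_i\bigwedge_j$ form; this makes the bookkeeping you describe entirely concrete but is mathematically the same step.
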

\begin{proof}
Since $\varphi$ is $G$-invariant, for simplicity we'll assume that $\varphi(x,y)$ is in fact of the form $\varphi(l\cdot x\cdot r,y)$. 
Let $c_p,c_q\subseteq G$ be tuples whose images in $Z_{d_\varphi}$ correspond to the $\varphi$-types $p,q\in S_{G,\varphi}(M)$, respectively.

Remembering that $u=(u_{ij})_{1\leq i,j\leq n}$ is a tuple of variables, we may write \[d_\varphi(l,r,y,u)=\bigvee_{i<n}\bigwedge_{j<n}\varphi(l\cdot u_{ij}\cdot r,y).\]

Since \[d_q(\varphi(b\cdot x\cdot c,a))=\{g\in G(M):\varphi((b\cdot g)\cdot x\cdot c,a)\in q)\}\] \[=\{g\in G(M) :\models d_\varphi(b\cdot g,c,a,c_q)\}\]
and
\[d_\varphi(b\cdot g,c,a,c_q)=\bigvee_{i<n}\bigwedge_{j<n}\varphi(b\cdot g\cdot ((c_q)_{ij}\cdot c),a),\]

we get that \[\varphi(b\cdot x\cdot c,a)\in p\cdot q \Longleftrightarrow \models \bigvee_{i<n}\bigwedge_{j<n}d_\varphi(b,((c_q)_{ij}\cdot c),a,c_p).\]

\end{proof}

Using this we define a partial binary operation on $Z_{d_\varphi}$:
\begin{definition}
For $c_1,c_2,d\in Z_{d_\varphi}$, we'll say that $c_1\cdot c_2=d$ if $d$ is the unique element of $Z_{d_\varphi}$ that satisfies \[ \models d_\varphi(a,d)\Longleftrightarrow \models \Phi(c_1,c_2,a).\]
for all $a\in M$.
\end{definition}

By compactness, there exists $k\in\mathbb{N}$ such that for all $c_1,c_2\in Z^k_{d_\varphi}$ there exists a unique $d\in Z_{d_\varphi}$ such that $c_1\cdot c_2=d$. For simplicity, we'll assume that this happens for $Z^1_{d_\varphi}$.

\begin{theorem}
$Z$ is contained in a definable semigroup extending the multiplication on $Z$.
\end{theorem}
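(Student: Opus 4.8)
The plan is to mimic the strategy already used for $\infty$-definable negatively ordered semigroups and surjective Clifford monoids: produce a nested chain of definable sets whose intersection is $Z$, carry a partially defined and partially associative multiplication on a large definable approximation, and then carve out a genuine definable semigroup by closing off the multiplication. Concretely, start from the definable sorts $Z_{d_\varphi}\supseteq Z^1_{d_\varphi}\supseteq Z^2_{d_\varphi}\supseteq\cdots$ with $Z=\bigcap_k Z^k_{d_\varphi}$, where on $Z^1_{d_\varphi}$ the partial operation $c_1\cdot c_2=d$ is everywhere defined and single-valued by the compactness reduction just made. First I would observe, again by compactness applied to the (complete) semigroup $S_{G,\varphi}(M)$, that associativity of this partial operation holds "up to finite consistency'': there is some $\ell$ with $1\le \ell$ such that for all $c_1,c_2,c_3\in Z^\ell_{d_\varphi}$ the products $(c_1\cdot c_2)\cdot c_3$ and $c_1\cdot(c_2\cdot c_3)$ land in $Z^1_{d_\varphi}$ and agree; for notational ease assume $\ell=1$ as the paper repeatedly does.

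Next I would build the analogue of the set $S_2$ from the proof of \Cref{P:product_of_n+1_is_n_is_inside_def}. The subtlety here is that, unlike the negatively ordered case, there is no a priori bound $n$ on the "complexity'' of iterated products, so I cannot simply take all products of at most $n$ elements of $Z^1_{d_\varphi}$. Instead I would exploit that $S_{G,\varphi}(M)$ is s$\pi$r (by \Cref{T:inf-def-is-spr}, since it is an $\infty$-definable semigroup in the stable structure $M^{eq}$) and that its chains of idempotents are uniformly bounded (\Cref{P:chain of idempotents}), together with the fact (used throughout Section~\ref{sec:S_G(M)}, from \cite{Newelski-stable-groups}) that each $p^n$ is a translate of a $\varphi$-generic of a $\varphi$-definable connected subgroup of $G(M)$. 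These facts should give, by compactness, a uniform bound $N$ such that every element of $Z$ is a product of at most $N$ elements each of which is either a "group-like'' canonical parameter (lying in one of the finitely many definable families of translated generics of definable subgroups) or an idempotent canonical parameter. I would then set
\[
Z_2=\Bigl\{c\in Z^0_{d_\varphi}:\exists c_1,\dots,c_N\in Z^1_{d_\varphi}\ \bigvee_{i\le N} c=c_1\cdot(c_2\cdot(\cdots\cdot c_i))\Bigr\},
\]
check (using the partial associativity and the properties of $Z^1_{d_\varphi}$) that $Z^1_{d_\varphi}\cdot Z_2\subseteq Z_2$, and finally put $Z_3=\{c\in Z_2: cZ_2\subseteq Z_2\}$, which is definable, contains $Z$, is closed under the (now total, by construction) multiplication, and on which associativity holds because it holds for all the approximations. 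Thus $Z_3$ is the desired definable semigroup extending the multiplication on $Z$, and intersecting over the choices of $d_\varphi$-sort and the compactness parameters exhibits $Z$ as an intersection of definable semigroups.

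The main obstacle I expect is precisely the uniform bound $N$: in the pure semigroup-theoretic lemmas of Section~\ref{Subsec:inf-def-with-neg-part} the bound came for free from negative ordering, but $S_{G,\varphi}(M)$ need not be negatively ordered, so I must extract $N$ from the finer structural description of $S_{G,\varphi}(M)$ as a strong "semilattice-like'' assembly of translated generics — i.e. from Newelski's analysis plus the uniform definability of the maximal subgroups (\Cref{L:max_subgrps_are_inf_definable}) and the uniform bound on idempotent chains. If that structural route is too delicate, the fallback is to reduce to the Clifford/inverse case: show $S_{G,\varphi}(M)$, or a definable thickening of it, satisfies the hypotheses of \Cref{T:Needed_for_1_based} (unit group definable — this is the stable-groups argument on the generics; idempotents commute — from \Cref{P:idemp_is_inside_definable} applied to $E(S_{G,\varphi})$; and $a=ge$ decomposition — from the $p^n$-is-a-translated-generic statement), and invoke that theorem directly. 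Either way, once a definable overgroup-like envelope is in hand, the closing-off argument is routine.
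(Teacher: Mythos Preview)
Your proposal misses the key step that makes the paper's proof almost immediate, and the route you outline instead rests on an unjustified (and probably false) bound.

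The paper does not need any uniform $N$ bounding the length of products, nor any s$\pi$r or Clifford/inverse structure. The crucial observation is the absorption property
\[
Z\cdot Z^2_{d_\varphi}\subseteq Z^2_{d_\varphi},
\]
which follows directly from the \emph{specific} description of the multiplication on $S_{G,\varphi}(M)$ via $U\in p\cdot q\Leftrightarrow d_q(U)\in p$. Concretely: if $c_p\in Z$ (so $p$ is a genuine consistent $\varphi$-type) and $c\in Z^2_{d_\varphi}$ (so $p^0_c$ is $2$-consistent), and $U_1,U_2\in p^0_{c_p\cdot c}$, then both sets $\{g:g^{-1}U_i\in p^0_c\}$ lie in $p$; since $p$ is consistent, a common $g$ exists, and $2$-consistency of $p^0_c$ finishes the job. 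Once this is in hand, one simply takes a $k$ (say $k=2$) on which associativity holds and products land in $Z^1_{d_\varphi}$, and defines
\[
\widehat{Z^2_{d_\varphi}}=\{c\in Z^2_{d_\varphi}: c\cdot Z^2_{d_\varphi}\subseteq Z^2_{d_\varphi}\}.
\]
This is definable, contains $Z$ by the claim, and is closed under multiplication.

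Your plan, by contrast, treats $Z$ as a generic $\infty$-definable semigroup and tries to extract a uniform bound $N$ from s$\pi$r plus bounded idempotent chains plus Newelski's ``translate of a generic'' description. But s$\pi$r only controls powers of a single element, not arbitrary products, and there is no reason the latter should reduce to products of bounded length of ``nice'' generators; you yourself flag this as the main obstacle, and indeed it is one you cannot cross in general. Your fallback --- reducing to \Cref{T:Needed_for_1_based} --- also fails, since $S_{G,\varphi}(M)$ need not satisfy the $a=ge$ decomposition or have commuting idempotents unless $G$ is, e.g., $1$-based; the paper explicitly treats that as a special case in \Cref{ss:S_G inverse}. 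The point you are missing is that this theorem is not a purely abstract semigroup result: it uses the concrete formula for the product of types.
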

\begin{proof}
By compactness there exists $k\in\mathbb{N}$ such that the multiplication is associative on $Z^k_{d_\varphi}$ and the product of two elements of $Z^k_{d_\varphi}$ is in $Z^1_{d_\varphi}$. For simplicity, let's assume that this happens for $Z^2_{d_\varphi}$. 
\begin{claim}
If $c_p\in Z$ and $c\in Z^2_{d_\varphi}$ then $c_p\cdot c\in Z^2_{d_\varphi}.$
\end{claim}
Let $U_1,U_2\in p_{c_p\cdot c}$, hence \[\{g\in G(M):g^{-1}U_1\in p^0_c\},\{g\in G(M):g^{-1}U_2\in p^0_c\}\in p.\] Since $p$ is consistent, there exists $g\in G(M)$ such that $g^{-1}U_1,g^{-1}U_2\in p^0_c$. Since $c\in Z^2_{d_\varphi}$,  $p^0_c$ is $2$-consistent. Thus, the claim follows.

Define \[\widehat{Z^2_{d_\varphi}}=\{c\in Z^2_{d_\varphi}: c\cdot Z^2_{d_\varphi}\subseteq Z^2_{d_\varphi}\}.\]
$\widehat{Z^2_{d_\varphi}}$ is the desired definable semigroup.
\end{proof}

\begin{corollary}
$Z=S_{G,\varphi}(M)$ is an intersection of definable semigroups.
\end{corollary}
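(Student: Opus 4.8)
The plan is to read off the corollary from the theorem just proved together with the compactness-flavoured observation that made the theorem work. We have shown that $Z = S_{G,\varphi}(M)$ is contained in a definable semigroup $\widehat{Z^2_{d_\varphi}}$ extending the multiplication on $Z$, and that $Z = \bigcap_{k<\omega} Z^k_{d_\varphi}$ is itself a countable intersection of definable sets. The point is that essentially the same construction works with $Z^2_{d_\varphi}$ replaced by $Z^k_{d_\varphi}$ for every sufficiently large $k$, producing for each such $k$ a definable semigroup $\widehat{Z^k_{d_\varphi}}$ with $Z \subseteq \widehat{Z^k_{d_\varphi}} \subseteq Z^k_{d_\varphi}$; intersecting over all $k$ then recovers $Z$ exactly.

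First I would fix, by compactness, an increasing sequence of levels $k_0 < k_1 < \cdots$ cofinal in $\omega$ such that for each $k \geq k_0$ the partial multiplication on $Z^k_{d_\varphi}$ is single-valued with values in $Z^{k_0}_{d_\varphi}$ and is associative on $Z^k_{d_\varphi}$ (this is the same compactness step used in the proof of the theorem, just kept parametric in $k$ rather than specialised to $k=2$). Next, the Claim from the theorem's proof goes through verbatim at each level: if $c_p \in Z$ and $c \in Z^k_{d_\varphi}$ then $c_p \cdot c \in Z^k_{d_\varphi}$, since consistency of $p$ lets us find a single $g \in G(M)$ witnessing $k$-consistency of finitely many formulae in $p^0_c$. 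Then set
\[
\widehat{Z^k_{d_\varphi}} = \{ c \in Z^k_{d_\varphi} : c \cdot Z^k_{d_\varphi} \subseteq Z^k_{d_\varphi} \},
\]
which is definable and, by the Claim, contains $Z$; and on it the multiplication is total (into $Z^k_{d_\varphi}$, hence by definition of $\widehat{Z^k_{d_\varphi}}$ one checks closure) and associative, so $\widehat{Z^k_{d_\varphi}}$ is a definable semigroup extending the multiplication on $Z$.

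Finally I would verify $Z = \bigcap_{k} \widehat{Z^k_{d_\varphi}}$. The inclusion $\subseteq$ is the content of the previous paragraph. For $\supseteq$, note $\widehat{Z^k_{d_\varphi}} \subseteq Z^k_{d_\varphi}$, so $\bigcap_k \widehat{Z^k_{d_\varphi}} \subseteq \bigcap_k Z^k_{d_\varphi} = Z$ since the $k$ range over a cofinal set in $\omega$. This exhibits $S_{G,\varphi}(M)$ as an intersection of definable semigroups.

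I do not expect a serious obstacle here: the only thing to be slightly careful about is making the compactness step uniform in $k$ (so that the same ambient definable sets $M_0$-analogues, i.e. the sorts $Z_{d_\varphi}$ and the formula $\Phi$, work at every level), and checking that $\widehat{Z^k_{d_\varphi}}$ is genuinely closed under multiplication rather than merely mapping into $Z^k_{d_\varphi}$ — but both are immediate from the definitions once the levels are chosen, exactly as in the proof of the theorem.
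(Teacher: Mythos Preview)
Your proposal is correct and is precisely the argument the paper has in mind: the corollary is stated without proof, but the preceding theorem's construction was deliberately phrased with the simplifying assumption ``let's assume that this happens for $Z^2_{d_\varphi}$'' so that one can run it at every level $k$, obtaining $Z\subseteq \widehat{Z^k_{d_\varphi}}\subseteq Z^k_{d_\varphi}$ and hence $Z=\bigcap_k \widehat{Z^k_{d_\varphi}}$. You have simply written out what the paper leaves implicit.
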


Looking even closer at the above proof we may show that $S_G(M)$ is an inverse limit of definable semigroups:

Assume that $\Delta_2=\{\varphi_1,\varphi_2\}$ and $\Delta_1=\{\varphi_1\}$. In the above notations:
\[Z_{\Delta_2}=Z_{d_{\varphi_1}}\times Z_{d_{\varphi_2}}\]
For $c=\langle c_1,c_2\rangle \in Z_{\Delta_2}$ define 
\[p^0_c=p^0_{c_1} \cup p^0_{c_2}\] and then \[Z(\Delta_2)=\bigcap Z^k_{\Delta_2}\] similarly. 

For $c,c',d\in Z(\Delta_2)$ we'll say that $c\cdot c'=d$ if $d$ is the unique element $d\in Z_(\Delta_2)$ that satisfies:
\[c_1\cdot c'_1=d_1 \text{ and } c_2\cdot c'_2=d_2\]
As before, we assume that such a unique element exists already for any pair of elements in $Z^1_{\Delta_2}=Z^1_{\varphi_1}\times Z^1_{\varphi_2}$. The restriction maps $r^{\Delta_2}_{\Delta_1}:Z^1_{\Delta_2}\to Z^1_{\Delta_1}$ are definable homomorphisms.
Generally, for every $\Delta=\{\varphi_1,\cdots, \varphi_n\}$ and $i<\omega$ \[Z_\Delta^i=Z_{\varphi_1}^i\times\cdots \times Z_{\varphi_n}^i\] the multiplication is coordinate-wise. So the restriction commutes with the inclusion.
As a result,
\begin{theorem}\label{T:invlim}
$S_G(M)$ is an inverse limit of definable semigroups:
\[\varprojlim_{\Delta,i} Z^\Delta_i=\varprojlim_\Delta S_{G,\Delta}(M).\]
\end{theorem}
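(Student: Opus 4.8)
The plan is to organise the definable approximations constructed in the single-formula case into one directed inverse system indexed by pairs $(\Delta,i)$ and then read off the limit using that inverse limits commute with inverse limits. First I would fix, for each finite $G$-invariant $\Delta=\{\varphi_1,\dots,\varphi_n\}$, the decreasing chain of definable sets $Z^1_\Delta\supseteq Z^2_\Delta\supseteq\cdots$ of canonical parameters of $i$-consistent complete $\Delta$-definitions: $Z^i_\Delta$ is a definable subset of $\prod_j Z_{d_{\varphi_j}}$, carved out by requiring that the $\varphi$-set $p^0_c=\bigcup_j p^0_{c_j}$ attached to $c=\langle c_1,\dots,c_n\rangle$ be $i$-consistent, so that $\bigcap_i Z^i_\Delta=S_{G,\Delta}(M)$, and it carries the coordinate-wise partial multiplication $c\cdot c'=d\iff c_j\cdot c'_j=d_j$ for all $j$. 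By compactness there is a threshold $i_0(\Delta)$ such that for $i\ge i_0(\Delta)$ this multiplication is total and associative on $Z^i_\Delta$; after replacing $Z^i_\Delta$ by the definable subset $\widehat{Z^i_\Delta}$ closed under multiplication --- exactly as $\widehat{Z^2_{d_\varphi}}$ was extracted from $Z^2_{d_\varphi}$ in the one-formula argument --- each $\widehat{Z^i_\Delta}$ is a genuine definable semigroup in $M^{eq}$, and since $S_{G,\Delta}(M)\subseteq\widehat{Z^i_\Delta}\subseteq Z^i_\Delta$ we still have $\bigcap_{i\ge i_0(\Delta)}\widehat{Z^i_\Delta}=S_{G,\Delta}(M)$; as $\{i:i\ge i_0(\Delta)\}$ is cofinal in $\omega$, this re-indexing costs nothing.

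Next I would verify that these data form a directed inverse system over the poset $I=\{(\Delta,i):\Delta\text{ finite and }G\text{-invariant},\ i\in\omega\}$ ordered by $(\Delta_1,i_1)\le(\Delta_2,i_2)\iff\Delta_1\subseteq\Delta_2$ and $i_1\le i_2$, which is directed since finite $G$-invariant sets of formulae are closed under unions. The transition map $\widehat{Z^{i_2}_{\Delta_2}}\to\widehat{Z^{i_1}_{\Delta_1}}$ is the composite of the inclusion $\widehat{Z^{i_2}_{\Delta_2}}\hookrightarrow\widehat{Z^{i_1}_{\Delta_2}}$ with the restriction $r^{\Delta_2}_{\Delta_1}$ that forgets the coordinates outside $\Delta_1$; both are definable, both are semigroup homomorphisms because the multiplication is literally coordinate-wise and is the common restriction of the one-formula operations, and they commute and compose correctly (the restriction commutes with the inclusion), so we obtain a functor into the category of definable semigroups of $M^{eq}$.

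Finally I would invoke the formal fact that limits commute with limits: over the product poset $I$ one has $\varprojlim_{(\Delta,i)}\widehat{Z^i_\Delta}=\varprojlim_\Delta\bigl(\varprojlim_i\widehat{Z^i_\Delta}\bigr)$, and for each fixed $\Delta$ the inner limit of the decreasing chain $\widehat{Z^i_\Delta}$ along the inclusions is $\bigcap_i\widehat{Z^i_\Delta}=S_{G,\Delta}(M)$, with the operation inherited from the common partial multiplication --- which is precisely the multiplication of $S_{G,\Delta}(M)$. Hence $\varprojlim_{\Delta,i}Z^\Delta_i=\varprojlim_\Delta S_{G,\Delta}(M)=S_G(M)$, exhibiting $S_G(M)$ as an inverse limit of definable semigroups of $M^{eq}$, as asserted.

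The steps that need genuine care rather than bookkeeping are two. First, in the construction one must ensure that for a $\Delta$ with several formulae $\bigcap_i Z^i_\Delta$ really is the space $S_{G,\Delta}(M)$ of \emph{consistent} complete $\Delta$-types and not the larger product $\prod_j S_{G,\varphi_j}(M)$; this is exactly why $Z^i_\Delta$ must encode joint $i$-consistency of $\bigcup_j p^0_{c_j}$ and not merely coordinate-wise consistency. Second, one must handle honestly that the associativity/closure thresholds $i_0(\Delta)$ depend on $\Delta$ --- this is what makes the cofinality observation necessary --- and check that $r^{\Delta_2}_{\Delta_1}$ really does carry $\widehat{Z^i_{\Delta_2}}$ into $\widehat{Z^i_{\Delta_1}}$ and is a homomorphism on the nose. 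Everything else reduces to the purely formal commuting-limits identity.
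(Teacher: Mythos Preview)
Your proposal is correct and follows essentially the same route as the paper: organise the definable approximations into an inverse system over pairs $(\Delta,i)$ with coordinate-wise multiplication and restriction/inclusion as transition maps, then invoke the formal fact that inverse limits commute with inverse limits. You are in fact more careful than the paper's sketch on exactly the two points you flag --- the paper's text is ambiguous about whether $Z^i_\Delta$ is the bare product $\prod_j Z^i_{\varphi_j}$ or the joint $i$-consistency locus (only the latter gives $\bigcap_i Z^i_\Delta=S_{G,\Delta}(M)$ rather than $\prod_j S_{G,\varphi_j}(M)$), and it does not explicitly pass to the definable subsemigroups $\widehat{Z^i_\Delta}$ or track the associativity threshold $i_0(\Delta)$, both of which you handle.
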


\subsection{The case where $S_G(M)$ is an inverse monoid}\label{ss:S_G inverse}
We would like to use the Theorems we proved in \Cref{S:big-wedge-defin} to improve the result in the situation where $S_G(M)$ is an inverse monoid. We'll first see that this situation might occur. Notice that the inverse operation $^{-1}$ on $S_G(M)$ is an involution.

\begin{proposition}\cite{Lawson}\label{P:semi-topo semigroup}
Let $S$ be a compact semitopological $*$-semigroup (a semigroup with involution) with a dense unit group $G$. Then the following are equivalent for any element $p\in S$:
\begin{enumerate}
\item $p=pp^*p$;
\item $p$ has a unique quasi-inverse;
\item $p$ has an quasi-inverse.
\end{enumerate}
\end{proposition}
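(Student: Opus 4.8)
The implications $(1)\Rightarrow(3)$ and $(2)\Rightarrow(3)$ are formal: the second is trivial, and for the first, since $*$ is an anti-involution, applying it to $p=pp^*p$ gives $p^*=p^*p^{**}p^*=p^*pp^*$, so $p^*$ is a quasi-inverse of $p$. Thus everything reduces to showing that regularity of $p$ in such a semigroup forces its quasi-inverse to be unique \emph{and} equal to $p^*$; this yields $(3)\Rightarrow(2)$ and $(3)\Rightarrow(1)$ at once, and together with the two formal implications closes the loop. The hypotheses cannot be dropped: $M_2(\mathbb{C})$ is a (non-compact) semitopological $*$-semigroup with dense unit group $GL_2(\mathbb{C})$ in which every element is regular, yet a rank-one matrix has many quasi-inverses and satisfies $pp^*p\neq p$; so compactness has to be used in an essential way.

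For the uniqueness I would pass, via Green's relations, to a statement purely about idempotents. If $q$ is a quasi-inverse of $p$ then $e:=pq$ and $f:=qp$ are idempotents with $e\,\mathcal{R}\,p$ and $f\,\mathcal{L}\,p$, and conversely the quasi-inverses of $p$ are in bijection with pairs consisting of an idempotent in the $\mathcal{R}$-class of $p$ and an idempotent in the $\mathcal{L}$-class of $p$, the bijection being $q\mapsto(pq,qp)$. Hence the quasi-inverse of a regular $p$ is unique precisely when each of these two classes contains just one idempotent, i.e. when, for idempotents $e,f$, $e\,\mathcal{R}\,f$ (equivalently $ef=f$ and $fe=e$) implies $e=f$, together with the left–right dual. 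This is where compactness and the density of $G$ enter. A convenient tool is that, although $\mu$ is only separately continuous, for fixed $a,b$ the map $x\mapsto axb$ \emph{is} continuous, being a composite of a left and a right translation; hence $eSe$, $eS$, $Se$ are closed, and $eS=\overline{eG}$. To rule out $e\neq f$ with $e\,\mathcal{R}\,f$ I would work inside the closed subsemigroup $eS=fS$ — which has $e$ and $f$ as distinct left identities and the dense subset $eG$ — and invoke the structure theory of compact semitopological semigroups: closed subsemigroups have idempotents, the minimal ideal is closed and completely simple, and, by Ellis's theorem, a compact semitopological group is a topological group. The point to extract is that a dense unit group should forbid a compact semitopological semigroup from carrying two distinct left identities (contrast the right-zero semigroup, where every element is a left identity and an idempotent, so the conclusion genuinely needs the hypotheses).

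Once $\mathcal{R}$- and $\mathcal{L}$-classes are known to contain at most one idempotent, the identification $q=p^*$ (hence $pp^*p=p$) should drop out by symmetry: $*$ is continuous, restricts to $g\mapsto g^{-1}$ on the dense group $G$, and interchanges $\mathcal{R}$-classes with $\mathcal{L}$-classes, so it carries the unique idempotents $e=pq\in R_p$ and $f=qp\in L_p$ to the unique idempotents of $L_{p^*}$ and $R_{p^*}$ respectively; feeding this into the Green's-relations bijection for both $p$ and $p^*$ pins $q$ down as $p^*$.

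The step I expect to be the real obstacle is the idempotent lemma — that in a compact semitopological semigroup with dense unit group distinct idempotents are never $\mathcal{R}$- or $\mathcal{L}$-related. The difficulty is precisely that $S$ is semitopological rather than topological: one cannot pass a limit through a product whose two factors both move, so the naive argument (approximate $f$ and a subnet-limit $h$ of $g_i^{-1}$ by elements of $G$ and read $fh=1$ off from $g_ig_i^{-1}=1$) is not licensed, and there is no direct net-manipulation proof in sight. I would therefore prove it through the completely simple structure of the minimal ideal together with the Ellis-type rigidity of the dense group, after which $(3)\Rightarrow(2)$, $(3)\Rightarrow(1)$, and hence the full equivalence, follow as above.
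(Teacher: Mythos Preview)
The paper does not prove this proposition. It is quoted verbatim from \cite{Lawson} and no argument is supplied; the only addition is the one-line remark that in the concrete situation $G(M)\hookrightarrow S_G(M)$ the statement can be obtained directly ``using model theory and stabilizers''. So there is nothing in the paper to compare your argument against at the level of the general compact semitopological result.

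Your outline is a reasonable plan for the general statement --- the reduction via Green's relations to ``no two $\mathcal{R}$-equivalent (or $\mathcal{L}$-equivalent) idempotents'' is the standard move, and you are right that this is where compactness and the density of $G$ must be spent --- but as you yourself flag, the crucial idempotent lemma is asserted rather than proved, so the proposal is a strategy, not a proof. If your aim is to match what the paper actually does, the relevant target is not the Lawson argument at all but the model-theoretic route hinted at in the remark: for $p\in S_G(M)$ one computes $\mathrm{Stab}(p)$, shows that a regular $p$ must be a coset generic, and reads off $pp^{-1}p=p$ and uniqueness from that description (cf.\ the use of \cite{Kowalski} immediately afterwards). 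That bypasses the semitopological machinery entirely.
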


\begin{remark}
In the situation of $G(M)\hookrightarrow S_G(M)$, the above Proposition can be proved directly using model theory and stabilizers.
\end{remark}

Translating the above result to our situation and using results in $1$-based groups (see \cite{Pillay}):
\begin{corollary}
The following are equivalent:
\begin{enumerate}
\item for every $p\in S_G(M)$, $p$ is the generic of a right coset of a connected $M$-$\infty$-definable subgroup of $G$;
\item for every $p\in S_G(M)$, $p\cdot p^{-1}\cdot p=p$;
\item $S_G(M)$ is an inverse monoid;
\item $S_G(M)$ is a regular monoid.
\end{enumerate}
Thus if $G$ is $1$-based then $S_G(M)$ is an inverse monoid.
\end{corollary}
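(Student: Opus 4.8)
The plan is to establish the chain of equivalences (1)$\Rightarrow$(2)$\Rightarrow$(3)$\Rightarrow$(4)$\Rightarrow$(1), using Proposition~\ref{P:semi-topo semigroup} for the bridge between the "regular element" condition and the "$*$-regular" condition, and then pulling back to the model-theoretic picture via the involution $^{-1}$ on $S_G(M)$. First I would recall that $S_G(M)$ is a compact semitopological semigroup (the multiplication is separately continuous, as is standard for the type-space multiplication) equipped with the involution $p\mapsto p^{-1}$, and that it contains $G(M)$ as a dense subgroup of units; this is precisely the setting of Proposition~\ref{P:semi-topo semigroup}. With $p^*:=p^{-1}$, condition (2) of the Corollary is exactly "$p=pp^*p$", and (4) is exactly "every $p$ has a quasi-inverse", so Proposition~\ref{P:semi-topo semigroup} already gives (2)$\Leftrightarrow$(4), and also (2)$\Leftrightarrow$"$p$ has a \emph{unique} quasi-inverse"; the latter, said of every $p$, is the definition of an inverse monoid, so (2)$\Leftrightarrow$(3) as well. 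So the only genuine work is identifying condition (1) — each $p$ being the generic of a right coset of a connected $\infty$-definable subgroup — with condition (2).

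For (1)$\Rightarrow$(2): if $p$ is the generic of the right coset $Hg$ of a connected $\infty$-definable subgroup $H$, then $p^{-1}$ is the generic of $g^{-1}H=g^{-1}H g\cdot g^{-1}$ (conjugation sends the coset generic to a coset generic of the conjugate subgroup, which is again connected), and multiplying out $p\cdot p^{-1}\cdot p$ one checks, using that the free product of independent generics of cosets of a connected subgroup is again the generic of the appropriate coset (the Stabilizer Theorem / the theory of generics in stable groups), that $p\cdot p^{-1}\cdot p$ is again the generic of $Hg$, i.e. equals $p$. For (2)$\Rightarrow$(1): this is where I would invoke the $1$-based-groups machinery referenced as \cite{Pillay}. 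The point is that $p\cdot p^{-1}=e$ is an idempotent of $S_G(M)$, and idempotents of $S_G(M)$ are exactly the generics of connected $\infty$-definable subgroups; from $p=p\cdot e$ with $e=p p^{-1}$ one reads off that $p$ is a translate of the generic of a connected subgroup, and a little bookkeeping with left versus right cosets (using $p^{-1}p$ on the other side) pins down that it is a right coset. The final sentence "if $G$ is $1$-based then $S_G(M)$ is an inverse monoid" then follows because in a $1$-based stable group every type is the generic of a coset of a connected $\infty$-definable subgroup (the structure theorem for $1$-based groups), so condition (1) holds outright.

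The main obstacle I anticipate is verifying condition (1) genuinely holds in the $1$-based case and matching it precisely to condition (2): one must be careful that "coset of a connected $\infty$-definable subgroup" is the correct characterization of the relevant types, that the subgroups arising as stabilizers here are exactly the connected ones, and that left/right coset distinctions are handled consistently with the convention $p\cdot q=\mathrm{tp}(a\cdot b/M)$ for $a\forkindep[M] b$. The equivalences (2)$\Leftrightarrow$(3)$\Leftrightarrow$(4) are essentially a direct translation of Proposition~\ref{P:semi-topo semigroup} and should be routine once the topological and involutive structure of $S_G(M)$ is in place; I would state these carefully but not belabor them.
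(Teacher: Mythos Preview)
Your proposal is correct and mirrors the paper's approach: the equivalence of (2), (3), and (4) via Proposition~\ref{P:semi-topo semigroup}, and (1)$\Leftrightarrow$(2) handled separately, with the final sentence following from the structure theorem for $1$-based groups. The only difference is that the paper cites \cite[Lemma~1.2]{Kowalski} for (1)$\Leftrightarrow$(2) rather than arguing directly as you sketch (and note a minor slip in your sketch: from $pp^{-1}p=p$ one gets $e\cdot p=p$ with $e=pp^{-1}$, not $p\cdot e=p$, though you do flag the left/right bookkeeping).
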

\begin{proof}
$(2),(3)$ and $(4)$ are equivalent by Proposition \ref{P:semi-topo semigroup} and $(1)$ is equivalent to $(2)$ by \cite[Lemma 1.2]{Kowalski}
\end{proof}

With a little more work one may characterise when $S_G(M)$ is a Clifford Monoid.
\begin{definition}
A right-and-left coset of a subgroup $H$ is a right coset $Ha$ such that $aH=Ha$.
\end{definition}

\begin{proposition}\cite{Kowalski}
$p\in S_G(M)$ is a generic of a right-and-left coset of an $M$-$\infty$-definable connected subgroup of $G$ if and only if $p\cdot p\cdot p^{-1}=p$.
\end{proposition}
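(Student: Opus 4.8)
The plan is to characterize the condition $p\cdot p\cdot p^{-1}=p$ in the inverse monoid $S_G(M)$ in terms of the geometry of the underlying $\infty$-definable cosets, exactly as was done for the conditions in the preceding Corollary and Proposition. First I would fix $p\in S_G(M)$; by the previous Corollary (and since we are necessarily in the inverse-monoid case for $p^{-1}$ to make sense), $p$ is already the generic of a right coset $Ha$ of some connected $M$-$\infty$-definable subgroup $H\leq G$, where $a\models p$. The element $p^{-1}$ is then the generic of $a^{-1}H = H^{a^{-1}} a^{-1}$; more precisely, if $b\models p$ with $b\forkindep[M] a$ one computes the stabilizer picture and sees $p^{-1}$ is the generic of the coset $Ha$ read backwards, i.e. generic of $\{c : c^{-1}\in Ha\}$.

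The computational heart is to identify $p\cdot p\cdot p^{-1}$ as a generic of an explicit coset and compare it with $Ha$. Here I would use the description of the product from \Cref{D:of product} together with the coset calculus: if $a_1,a_2,a_3$ realize $p,p,p^{-1}$ respectively with $a_1\forkindep[M]a_2\forkindep[M]a_3$ (suitably independent), then $a_1a_2a_3$ realizes the generic of the coset obtained by multiplying $Ha\cdot Ha\cdot (Ha)^{-1}$ in the sense of generics — which, because $H$ is connected and normalized appropriately, works out to $aHa^{-1}\cdot Ha = H^{a}\cdot Ha$ translated suitably, and in any case lands in the single coset $(aHa^{-1})\, Ha$. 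The equation $p\cdot p\cdot p^{-1}=p$ then says this coset equals $Ha$, equivalently $aHa^{-1}=H$ as $\infty$-definable subgroups (using that two generics of cosets of connected subgroups are equal iff the cosets coincide, by \cite{Pillay} / \cite{Kowalski}), i.e. $aH=Ha$. That is precisely the statement that $Ha$ is a right-and-left coset. Conversely, if $aH=Ha$ then $H^a=H$ and the product collapses back to the generic of $Ha$, giving $p\cdot p\cdot p^{-1}=p$.

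I would organize this as: (i) recall $p=$ generic of $Ha$ and describe $p^{-1}$; (ii) a lemma that for generics of cosets of connected $\infty$-definable subgroups, the product of generics is the generic of the product coset, and two such generics agree iff the cosets agree (citing \cite{Pillay,Kowalski}); (iii) the explicit computation $p\cdot p\cdot p^{-1}=$ generic of $(aHa^{-1})Ha$; (iv) the equivalence $(aHa^{-1})Ha = Ha \iff aHa^{-1}=H \iff aH=Ha$.

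The main obstacle I anticipate is step (iii): making the "product of generics = generic of product coset" bookkeeping precise when the same type $p$ is multiplied by itself, since one must take genuinely independent realizations and track how the non-forking condition interacts with the ambient group multiplication and with taking inverses. The connectedness of $H$ is what guarantees there is a unique generic on each coset and that the product coset's generic is again well-defined; I would lean on \cite[Lemma 1.2]{Kowalski} and the stabilizer formalism of \cite{Pillay} to discharge this rather than redo the stable-group theory from scratch. Everything else is coset arithmetic.
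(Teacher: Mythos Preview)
The paper gives no proof of this proposition; it is quoted directly from \cite{Kowalski} and left unargued. So there is nothing in the paper to compare your sketch against beyond the citation itself.

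On the substance of your outline: the forward direction is fine, and your coset arithmetic handles it. The converse, however, has a genuine gap. Your first step is to assume that $p$ is already the generic of a right coset $Ha$, on the grounds that ``we are necessarily in the inverse-monoid case for $p^{-1}$ to make sense''. That is not correct: the operation $p\mapsto p^{-1}$ on $S_G(M)$ is simply the involution $\operatorname{tp}(a/M)\mapsto\operatorname{tp}(a^{-1}/M)$, defined for \emph{every} $p$, independently of whether $S_G(M)$ is regular or inverse. So the hypothesis $p\cdot p\cdot p^{-1}=p$ is meaningful for arbitrary $p$, and nothing you have written shows it forces $p\cdot p^{-1}\cdot p=p$ (equivalently, that $p$ is a right-coset generic to begin with). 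Without that, your steps (ii)--(iv) do not get started.

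Even granting the right-coset picture, step (iii) is more delicate than you indicate. Writing $b_1b_2b_3^{-1}$ for independent $b_i\models p$ one lands in $H\cdot aHa^{-1}\cdot a$ (equivalently $HaH$), which is a double coset, not a coset of a connected subgroup, so ``the generic of the product coset'' is not a priori a single type. One really needs to argue via stabilizers (left and right) to extract from $p\cdot p\cdot p^{-1}=p$ that $aHa^{-1}=H$; the clean coset bookkeeping you propose only becomes available \emph{after} that equality is known, i.e.\ in the forward direction. This stabilizer analysis is what the cited argument in \cite{Kowalski} supplies, and it is the missing ingredient in your converse.
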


By using the following easy lemma
\begin{lemma}
Assuming $pp^{-1}p=p$, $$pp^{-1}=p^{-1}p \Leftrightarrow ppp^{-1}=p.$$
\end{lemma}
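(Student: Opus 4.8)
The forward implication is a one-line computation: assuming $pp^{-1}=p^{-1}p$, associativity together with the standing hypothesis $pp^{-1}p=p$ gives
\[
ppp^{-1}=p(pp^{-1})=p(p^{-1}p)=(pp^{-1})p=pp^{-1}p=p .
\]

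For the converse the plan is to show that $p$ \emph{itself} lies in a maximal subgroup of the ambient semigroup $S$; once this is known, the pseudo-inverse $p^{-1}$ must coincide with the group inverse of $p$, and hence $pp^{-1}=p^{-1}p$. One point must be flagged first: the hypothesis $pp^{-1}p=p$ alone (with uniqueness of pseudo-inverses) does \emph{not} suffice — the shift $p\in\mathscr I(\mathbb N)$ has $ppp^{-1}=p$ yet $pp^{-1}\neq p^{-1}p$ — so the argument uses that $S$ (in the intended application, $S_G(M)$) is strongly $\pi$-regular. Assume $ppp^{-1}=p$. Taking pseudo-inverses of both sides and using $(ab)^{-1}=b^{-1}a^{-1}$ gives $pp^{-1}p^{-1}=p^{-1}$, and then, writing $p^{-n}:=(p^{-1})^{n}=(p^{n})^{-1}$, a routine induction yields
\[
p^{n}p^{-n}=pp^{-1}\qquad\text{for all }n\ge 1,
\]
the inductive step being $p^{n+1}p^{-(n+1)}=p(p^{n}p^{-n})p^{-1}=(ppp^{-1})p^{-1}=pp^{-1}$.

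Now, by strong $\pi$-regularity choose $N\ge 1$ with $p^{N}$ in a maximal subgroup $G(fSf)$, $f\in E(S)$. In an inverse semigroup the group inverse of $p^{N}$ inside $G(fSf)$ is again a pseudo-inverse of $p^{N}$ in $S$, hence equals $p^{-N}$; thus $p^{N}p^{-N}=p^{-N}p^{N}=f$, and comparing with the displayed identity at $n=N$ gives $f=pp^{-1}$. Consequently $fp=pp^{-1}p=p$ and $pf=ppp^{-1}=p$, so $p\in fSf$. Since $p^{N}\in G(fSf)$ and $p^{N}=p\cdot p^{N-1}=p^{N-1}\cdot p$ inside the monoid $fSf$, the element $p$ has both a right and a left inverse there, so $p\in G(fSf)$; its inverse in that group is a pseudo-inverse of $p$ in $S$, hence equals $p^{-1}$, and therefore $pp^{-1}=p^{-1}p\,(=f)$.

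The only substantive ingredient is the appeal to strong $\pi$-regularity: from $ppp^{-1}=p$ one directly extracts only the single inequality $p^{-1}p\le pp^{-1}$ in $E(S)$, and without a finiteness input (here s$\pi$r, available via \Cref{T:inf-def-is-spr} in the relevant setting) the reverse inequality can fail; everything else is bookkeeping with the identities $pp^{-1}p=p$, $p^{-1}pp^{-1}=p^{-1}$ and $(ab)^{-1}=b^{-1}a^{-1}$. As an alternative, in the concrete case of $S_G(M)$ one may instead read the equivalence off the description of elements as generics of right cosets $Ha$ of connected $\infty$-definable subgroups: $pp^{-1}$ is the generic of $H$, $p^{-1}p$ that of $a^{-1}Ha$, and $ppp^{-1}$ that of $HaH$, so that $ppp^{-1}=p$ amounts to $HaH=Ha$, i.e.\ $aH=Ha$, i.e.\ $H=a^{-1}Ha$, i.e.\ $pp^{-1}=p^{-1}p$.
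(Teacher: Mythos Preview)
The paper supplies no proof, labeling this an ``easy lemma''. Your forward direction is the one-line computation the paper presumably had in mind. For the converse, you are right that the bare algebraic statement (in an arbitrary inverse semigroup, assuming only $pp^{-1}p=p$) is false: the shift in $\mathscr{I}(\mathbb{N})$ is a genuine counterexample, with $ppp^{-1}=p$ but $pp^{-1}\neq p^{-1}p$. Your remedy---using strong $\pi$-regularity to force $p$ itself into a maximal subgroup, whence $p^{-1}$ is the group inverse and $pp^{-1}=p^{-1}p$ follows---is correct, and the hypothesis is available in the intended context by \Cref{T:inf-def-is-spr}. The inductive identity $p^{n}p^{-n}=pp^{-1}$ and the passage from $p^{N}\in G(fSf)$ to $p\in G(fSf)$ are both clean; the only point worth making explicit is that $fp^{k}=p^{k}f=p^{k}$ for all $k\ge 1$ (immediate from $fp=pf=p$), so that $p^{N-1}$ indeed lies in $fSf$ before you factor $p^{N}=p\cdot p^{N-1}$ there.

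The alternative you sketch at the end---reading the equivalence directly off the description of $p$ as the generic of a coset $Ha$, so that $ppp^{-1}=p$ becomes $HaH=Ha$, i.e.\ $aH=Ha$, i.e.\ $pp^{-1}=p^{-1}p$---is almost certainly what the paper meant by ``easy'', since it sits immediately after the citation of the right-and-left coset characterization. So your proof is correct and in fact sharper than the paper's presentation: you have isolated the hidden hypothesis (s$\pi$r, or equivalently the specific geometric setting of $S_G(M)$) that the stated lemma silently relies on.
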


we get

\begin{proposition}
The following are equivalent:
\begin{enumerate}
\item every $p\in S_G(M)$ is the generic of a right-and-left coset of a connected $M$-$\infty$-definable subgroup of $G$;
\item $S_G(M)$ is a Clifford monoid.
\end{enumerate}
\end{proposition}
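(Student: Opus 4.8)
The plan is to deduce both implications formally from three facts already on the table: the characterization from \cite{Kowalski} that $p\in S_G(M)$ is the generic of a right-and-left coset of a connected $M$-$\infty$-definable subgroup of $G$ exactly when $p\cdot p\cdot p^{-1}=p$; the elementary lemma stating that, whenever $pp^{-1}p=p$, one has $pp^{-1}=p^{-1}p$ if and only if $ppp^{-1}=p$; and the characterization of Clifford semigroups among inverse semigroups as those satisfying $aa^{-1}=a^{-1}a$ for every $a$ (proved in the preliminaries). No new model theory is needed; the argument is an assembly of these.

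For $(1)\Rightarrow(2)$: assuming (1), every $p$ is in particular the generic of a right coset of a connected $M$-$\infty$-definable subgroup of $G$, so by the earlier Corollary $S_G(M)$ is an inverse monoid, and hence $pp^{-1}p=p$ for all $p$. On the other hand, (1) together with the \cite{Kowalski} proposition gives $ppp^{-1}=p$ for all $p$. The elementary lemma then yields $pp^{-1}=p^{-1}p$ for all $p$, and combining this with the fact that $S_G(M)$ is inverse and with the Clifford characterization, we conclude that $S_G(M)$ is a Clifford monoid.

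For $(2)\Rightarrow(1)$: if $S_G(M)$ is a Clifford monoid then it is an inverse monoid, so $pp^{-1}p=p$ for all $p$, and by the Clifford characterization $pp^{-1}=p^{-1}p$ for all $p$. The elementary lemma converts this into $ppp^{-1}=p$ for all $p$, and the \cite{Kowalski} proposition then says precisely that each $p$ is the generic of a right-and-left coset of a connected $M$-$\infty$-definable subgroup of $G$, which is (1).

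The only step that requires any attention is the opening move of $(1)\Rightarrow(2)$: one must observe that a generic of a right-and-left coset is in particular a generic of a right coset, so that the earlier Corollary applies and $S_G(M)$ is genuinely an inverse monoid; once inverseness is available, everything reduces to formal manipulations with the involution $^{-1}$. Consequently I do not expect a real obstacle here — the work has already been done in the preceding propositions.
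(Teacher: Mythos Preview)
Your proof is correct and is precisely the argument the paper has in mind: it simply writes ``By using the following easy lemma \ldots\ we get'' and leaves the assembly of the Kowalski characterization, the lemma, and the Clifford criterion to the reader. Your remark that for $(1)\Rightarrow(2)$ one must first invoke the earlier corollary (a right-and-left coset is in particular a right coset, so $S_G(M)$ is inverse and the hypothesis $pp^{-1}p=p$ of the lemma is available) is the one point the paper glosses over, and you handle it correctly.
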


As a result, it may happen that $S_G(M)$ is an inverse (or Cliford) monoid. One may wonder if in these cases we may strengthen the result.

\begin{lemma}
If $S_G(M)$ is a Clifford Monoid then so is $S_{G,\Delta}(M)$. They same goes for inverse monoids.
\end{lemma}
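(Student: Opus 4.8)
The plan is to transfer the Clifford (resp.\ inverse) monoid structure from $S_G(M)$ to $S_{G,\Delta}(M)$ via the restriction homomorphism $r_\Delta \colon S_G(M) \to S_{G,\Delta}(M)$. The first and crucial observation is that $r_\Delta$ is a \emph{surjective} homomorphism of semigroups: every $\Delta$-type extends to a complete type, and the product on $S_G(M)$ was defined so that restriction commutes with multiplication. So $S_{G,\Delta}(M)$ is a homomorphic image of $S_G(M)$. Since $S_G(M)$ is a monoid with identity the generic of $G$ (or rather $\mathrm{tp}(e/M)$ for the group identity $e$, which is the neutral element), its image under $r_\Delta$ is again a monoid, with identity $r_\Delta(1_{S_G(M)})$.

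Now I would use the fact that the classes of inverse semigroups and of Clifford semigroups are both closed under surjective homomorphic images — this is standard semigroup theory (see Howie, Section V.2 for inverse semigroups; a Clifford semigroup is a strong semilattice of groups, and a homomorphic image of a strong semilattice of groups is again one, as recalled earlier in the excerpt). Concretely: if $S_G(M)$ is inverse, then for $\bar p \in S_{G,\Delta}(M)$ pick $p \in S_G(M)$ with $r_\Delta(p) = \bar p$ and set $\bar p^{-1} := r_\Delta(p^{-1})$; the identities $\bar p\,\bar p^{-1}\,\bar p = \bar p$ and $\bar p^{-1}\,\bar p\,\bar p^{-1} = \bar p^{-1}$ follow by applying $r_\Delta$ to the corresponding identities in $S_G(M)$. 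Uniqueness of the pseudo-inverse in $S_{G,\Delta}(M)$ then follows because $r_\Delta$ sends the (commuting) idempotents of $S_G(M)$ onto the idempotents of $S_{G,\Delta}(M)$, so the idempotents of $S_{G,\Delta}(M)$ commute, and a regular semigroup with commuting idempotents is inverse (as recalled in the preliminaries). For the Clifford case one additionally observes that centrality of idempotents passes to the image: if $e = e^2$ in $S_{G,\Delta}(M)$, lift it — or rather lift an arbitrary element and an arbitrary idempotent — and apply $r_\Delta$ to the centrality identity in $S_G(M)$.

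The one point that needs a little care, and which I expect to be the main (minor) obstacle, is checking that every idempotent of $S_{G,\Delta}(M)$ \emph{lifts} to an idempotent of $S_G(M)$ — surjectivity of $r_\Delta$ only gives a preimage, not an idempotent preimage. This is handled using strong $\pi$-regularity (Proposition~\ref{T:inf-def-is-spr}, which applies since $S_{G,\Delta}(M)$ and $S_G(M)$ are $\infty$-definable in $M^{eq}$): given idempotent $\bar e$ and any lift $p$, some power $p^n$ lies in the maximal subgroup $G_f$ of $S_G(M)$ for a unique idempotent $f$, and then $r_\Delta(f)$ is an idempotent mapping into the subgroup containing $r_\Delta(p)^n = \bar e^n = \bar e$, forcing $r_\Delta(f) = \bar e$ by disjointness of distinct maximal subgroups (Proposition~\ref{P:Max.subgroups}(4)). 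With lifting of idempotents in hand, the closure arguments above go through verbatim, and the same proof handles both the inverse-monoid and the Clifford-monoid assertions.
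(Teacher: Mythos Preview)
Your overall strategy matches the paper's exactly: use that the restriction map $r_\Delta$ is a surjective semigroup homomorphism to transfer regularity, and then lift idempotents from $S_{G,\Delta}(M)$ to $S_G(M)$ to conclude that idempotents of $S_{G,\Delta}(M)$ commute (for the inverse case) or are central (for the Clifford case). The paper's proof is precisely these two sentences.

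There is, however, a gap in your justification of the idempotent-lifting step. You invoke \Cref{T:inf-def-is-spr} for $S_G(M)$, asserting that $S_G(M)$ is $\infty$-definable in $M^{eq}$. It is not: only each $S_{G,\Delta}(M)$ is $\infty$-definable, while $S_G(M)$ is merely the inverse limit $\varprojlim_\Delta S_{G,\Delta}(M)$, living across infinitely many sorts. So \Cref{T:inf-def-is-spr} does not apply to $S_G(M)$ directly, and there is no uniform bound on the exponent $n$ across all $\Delta$ coming from that proposition. Your argument that $r_\Delta(f)=\bar e$ (via maximal subgroups) is fine \emph{once} you know some $p^n$ lies in a subgroup of $S_G(M)$, but that premise is exactly what is in question.

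The paper sidesteps this by simply citing Newelski \cite{Newelski-stable-groups} for the fact that every idempotent of $S_{G,\Delta}(M)$ lifts to an idempotent of $S_G(M)$; this is a specific result about generics of connected subgroups, not a consequence of abstract strong $\pi$-regularity. So your proof is correct in outline but the lifting step should be attributed to Newelski rather than derived from \Cref{T:inf-def-is-spr}.
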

\begin{proof}
In order to show that $S_{G,\Delta}(M)$ is a Clifford Monoid we must show that it is regular and that the idempotents are central.

Indeed this follows from the fact that the restriction maps are surjective homomorphisms and that if $q|_\Delta$ is an idempotent there exists an idempotent $p\in S_G(M)$ such that $p|_\Delta=q|_\Delta$ \cite{Newelski-stable-groups}.
\end{proof}

Assume that $\Delta$ is a finite invariant set of formulae. We'll show that if $S_G(M)$ is an inverse monoid then $S_{G,\Delta}(M)$ is an intersection of definable inverse monoids.

We recall some definition from \cite{Newelski-stable-groups}. Since $\Delta$ is invariant for $p\in S_{G,\Delta}(M)$ we have a map
\[d_p:Def_{G,\Delta}(M)\to Def_{G,\Delta}(M)\]
defined by
\[U\mapsto \{g\in G(M): g^{-1}U\in p\}.\]
Here $Def_{G,\Delta}(M)$ are the $\Delta$-$M$-definable subsets of $G(M)$.

Furthermore, for $p\in S_{G,\Delta}(M)$ define 
\[Ker(d_p)=\{U\in Def_{G,\Delta}(M) : d_p(U)=\emptyset\}.\]

\begin{lemma}\cite{Newelski-stable-groups}
Let $\Delta$ be a finite invariant set of formulae and $p\in S_{G,\Delta}(M)$ be an idempotent. Then
\[\{q\in S_{G,\Delta}(M):Ker(d_q)=Ker(d_p)\}=\{g\cdot p: g\in G(M)\}=G(M)p.\]
In particular it is definable (in $M^{eq}$).
\end{lemma}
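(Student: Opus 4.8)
The plan is to prove the first displayed equality by two inclusions (the second is just notation for the semigroup translate) and then observe the definability. I identify $g\in G(M)$ with its type, so that $g\cdot p=\{U\in Def_{G,\Delta}(M):g^{-1}U\in p\}$, which lies in $S_{G,\Delta}(M)$ because $\Delta$ is $G$-invariant (hence $g^{-1}U$ is again $\Delta$-definable) and which is the semigroup product. For the inclusion $G(M)p\subseteq\{q:Ker(d_q)=Ker(d_p)\}$ I would just compute: for $g\in G(M)$ and any $U$,
\[
d_{g\cdot p}(U)=\{h\in G(M):(hg)^{-1}U\in p\}=d_p(U)\cdot g^{-1},
\]
and right translation by $g^{-1}$ sends a set to $\emptyset$ only if the set is already $\emptyset$, so $Ker(d_{g\cdot p})=Ker(d_p)$. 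This half uses neither idempotency nor stability.

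For the reverse inclusion, fix $q$ with $Ker(d_q)=Ker(d_p)$; it suffices to find $g\in G(M)$ with $q\subseteq g\cdot p$ as sets of formulae, since both $q$ and $g\cdot p$ are complete $\Delta$-types over $M$ and are therefore then equal. Note first that if $U\in q$ then $e\in d_q(U)$ (as $e^{-1}U=U\in q$), so $U\notin Ker(d_q)=Ker(d_p)$, so $d_p(U)\neq\emptyset$; moreover the family $\{d_p(U):U\in q\}$ has the finite intersection property, because $\bigcap_iU_i\in q$ and $\emptyset\neq d_p(\bigcap_iU_i)\subseteq\bigcap_i d_p(U_i)$. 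Thus everything reduces to producing a common point of this family \emph{inside} $G(M)$.

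That extraction is the step I expect to carry all the weight, and it is here that idempotency of $p$ (hence stability) really enters. Being idempotent, $p$ is the $\Delta$-generic of the relatively $\Delta$-definable (over $M$) subgroup $H$ equal to its stabilizer, and one checks that then each $d_p(U)$ is a union of left cosets of $H$ (membership $g\in d_p(U)$ depends only on $gH$). Passing to a sufficiently saturated $N\succ M$, with $\bar p,\bar q$ the nonforking extensions of $p,q$ (the nonforking-extension map being a homomorphism, so $\bar p$ is again idempotent, the $\Delta$-generic of $\bar H:=H(N)$), the finite intersection property above yields $g^*\in G(N)$ with $\bar q=g^*\cdot\bar p$. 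The delicate point is to descend $g^*$ into $G(M)$, i.e.\ to see that the coset $g^*\bar H$ is $M$-definable; this is exactly what the hypothesis $Ker(d_q)=Ker(d_p)$ forces, since it prevents $q$ from spreading over the cosets of $H$: if the type induced by $q$ on $G/H$ were not realized at an $M$-point, one exhibits a $\Delta$-definable set (a preimage of a ``small'' subset of $G/H$) lying in $Ker(d_q)$ but not in $Ker(d_p)$, a contradiction. Once that induced type is realized at some $g_0H\in(G/H)(M)$, replacing $q$ by $g_0^{-1}\cdot q$ reduces to the case where $q$ and $p$ agree on all right-$H$-invariant $\Delta$-definable sets, and a further argument, again through idempotency of $p$ and the kernel condition, upgrades this to $q=p$. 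These reductions are essentially Newelski's; I regard the descent of $g^*$ to $G(M)$ as the main obstacle.

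For the definability clause: $\{q:Ker(d_q)=Ker(d_p)\}$ is cut out by a formula of $M^{eq}$, because ``$Ker(d_q)=Ker(d_p)$'' says, for each of the finitely many $\psi\in\Delta$, that $\psi(x,\bar a)\in Ker(d_q)\Leftrightarrow\psi(x,\bar a)\in Ker(d_p)$, and ``$\psi(x,\bar a)\in Ker(d_r)$'' is first-order in $\bar a$ and the canonical parameter of $r$ — one quantifies over $\bar a$ and over $g\in G$, rewrites $\psi(g^{-1}x,\bar a)$ as a $\Delta$-instance using $G$-invariance, and applies the $\Delta$-definition of $r$. Hence $G(M)p$, being equal to this set, is relatively definable, which gives the ``in particular''.
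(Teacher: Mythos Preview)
The paper does not prove this lemma: it is stated with the citation \cite{Newelski-stable-groups} and no proof is given in the text. There is therefore nothing in the paper to compare your argument against; the author is simply quoting Newelski's result.

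As for your sketch on its own merits: the easy inclusion and the definability clause are fine, and your reduction of the hard inclusion to finding a common point of $\{d_p(U):U\in q\}$ in $G(M)$ is correct. But the part you yourself flag as ``the main obstacle'' --- descending the coset $g^*\bar H$ from $G(N)$ to $G(M)$ --- is not actually carried out: you say ``if the type induced by $q$ on $G/H$ were not realized at an $M$-point, one exhibits a $\Delta$-definable set \ldots'', but you do not exhibit it, and this is precisely where the work lies (one needs that $H$ is $\Delta$-definable over $M$ and that cosets of $H$ are separated by $\Delta$-formulae, which comes from $p$ being the $\Delta$-generic of $H$). Likewise the ``further argument'' upgrading agreement on right-$H$-invariant sets to $q=p$ is only named, not given. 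So what you have is an accurate outline of Newelski's proof rather than a proof; if you want to include a proof here rather than a citation, those two steps need to be written out.

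One small point: your finite-intersection argument uses that $\bigcap_i U_i$ again lies in $Def_{G,\Delta}(M)$, which is fine if (as is standard) $Def_{G,\Delta}(M)$ denotes the Boolean algebra generated by instances of $\Delta$, but you should say so.
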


\begin{corollary}
If $S_{G,\Delta}(M)$ is a regular semigroup then
\[S_{G,\Delta}(M)=\bigcup_{p \text{ idempotent}} G(M)p.\]
\end{corollary}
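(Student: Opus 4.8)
The plan is to establish the nontrivial inclusion $S_{G,\Delta}(M)\subseteq\bigcup_{p\text{ idempotent}}G(M)p$ — the reverse inclusion being immediate, since each $G(M)p$ is a subset of $S_{G,\Delta}(M)$ — by reducing it to the preceding Lemma. Fix $q\in S_{G,\Delta}(M)$. Since $S_{G,\Delta}(M)$ is regular, $q$ has a pseudo-inverse $b$, so in particular $qbq=q$; set $p:=b\cdot q$. Then $p$ is an idempotent, since $(b\cdot q)(b\cdot q)=b\cdot(q\cdot b\cdot q)=b\cdot q$, and moreover $q\cdot p=q\cdot(b\cdot q)=q$. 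By the preceding Lemma applied to the idempotent $p$ we have $G(M)p=\{r\in S_{G,\Delta}(M):Ker(d_r)=Ker(d_p)\}$, so it suffices to prove $Ker(d_q)=Ker(d_p)$; then $q\in G(M)p$, as wanted.

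To compare the two kernels I would use three elementary facts about the maps $d_{(-)}$ acting on $Def_{G,\Delta}(M)$, all immediate from the definition of the product (recall $U\in r\cdot s\Leftrightarrow d_s(U)\in r$) and from the invariance of $\Delta$, which guarantees that each $d_r(U)$ again lies in $Def_{G,\Delta}(M)$ so that the compositions below make sense: (a) $d_r(\emptyset)=\emptyset$; (b) $d_r(g^{-1}U)=g^{-1}d_r(U)$ for $g\in G(M)$; and (c) $d_{r\cdot s}(U)=d_r(d_s(U))$, which follows by unwinding $d_{r\cdot s}(U)=\{g\in G(M):d_s(g^{-1}U)\in r\}$ and applying (b). Now from $q=q\cdot p$ we get $d_q=d_q\circ d_p$, hence $d_p(U)=\emptyset$ forces $d_q(U)=d_q(\emptyset)=\emptyset$; and from $p=b\cdot q$ we get $d_p=d_b\circ d_q$, hence $d_q(U)=\emptyset$ forces $d_p(U)=d_b(\emptyset)=\emptyset$. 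Therefore $Ker(d_q)=Ker(d_p)$, which finishes the reduction.

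I do not expect a genuine obstacle here — the argument is bookkeeping once the preceding Lemma is available — but two points deserve care. First, all the sets $d_r(U)$ occurring must stay inside $Def_{G,\Delta}(M)$ for the composition identity (c) to be legitimate, which is precisely what the invariance of $\Delta$ delivers. Second, conceptually the role of regularity is just to produce, for each $q$, an idempotent $p$ related to $q$ by the two identities $q=q\cdot p$ and $p=b\cdot q$ (so that $q$ and $p$ lie in a common $\mathcal{L}$-class); the content of the computation above is that this particular relationship is already enough to force the finer invariant $Ker(d_{(-)})$ to coincide, and that invariant is exactly the one whose level sets the preceding Lemma identifies with the sets $G(M)p$.
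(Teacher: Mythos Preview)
Your proof is correct and follows essentially the same route as the paper: take a pseudo-inverse $b$ of $q$, set $p=b\cdot q$, and invoke the preceding Lemma via the equality $Ker(d_q)=Ker(d_p)$. The paper's proof is terser---it simply asserts this kernel equality without argument---so your verification of it (using $d_{r\cdot s}=d_r\circ d_s$ and the relations $q=q\cdot p$, $p=b\cdot q$) is a welcome expansion rather than a different approach.
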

\begin{proof}
Let $q\in S_{G,\Delta}(M)$. By regularity there exists $\tilde{q}$ such that \[q=q\tilde{q}q \text { and } \tilde{q}=\tilde{q}q\tilde{q}.\]
$\tilde{q}q$ is the desired idempotent and $Ker(q)=Ker(\tilde{q}q).$
\end{proof}

Recall \Cref{T:Needed_for_1_based}. Notice that a semigroup $S$ fulfilling the requirements of the Theorem is an inverse monoid. It is obviously regular and the pseudo-inverse is unique since the idempotents commute. We get the following:

\begin{corollary}
If $S_G(M)$ is an inverse semigroup then $S_{G,\Delta}(M)$ is an intersection of definable inverse semigroups.
\end{corollary}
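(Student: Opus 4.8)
The plan is to verify that $S_{G,\Delta}(M)$ satisfies the three hypotheses of \Cref{T:Needed_for_1_based} and then quote that theorem together with the remark following it. Recall that $M^{eq}$ is stable and $S_{G,\Delta}(M)$ is $\infty$-definable in it, so the theorem applies once the hypotheses are checked.

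First, since $S_G(M)$ is an inverse monoid, the preceding lemma (restriction maps are surjective homomorphisms and idempotents lift) gives that $S_{G,\Delta}(M)$ is an inverse monoid as well; in particular it is regular and its idempotents commute, which is hypothesis (2). Second, I must identify the unit group $G$ of $S_{G,\Delta}(M)$ and check that it is relatively definable in $M^{eq}$, which is hypothesis (1): the unit group is $G(M)\cdot p_1$, where $p_1$ is the identity idempotent of $S_{G,\Delta}(M)$ (the restriction of $\mathrm{tp}(1_G/M)$), and by Newelski's lemma quoted above, $\{q\in S_{G,\Delta}(M):\mathrm{Ker}(d_q)=\mathrm{Ker}(d_{p_1})\}=G(M)p_1$ is definable in $M^{eq}$. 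Third, hypothesis (3) — that every $q$ is of the form $g\cdot e$ with $g$ a unit and $e$ idempotent — is exactly the content of the Corollary $S_{G,\Delta}(M)=\bigcup_{p\ \mathrm{idempotent}}G(M)p$: given $q$, choose the idempotent $p$ with $q\in G(M)p$, write $q=\bar g\cdot p$ with $\bar g\in G(M)$, and observe that the image of $\bar g$ in $S_{G,\Delta}(M)$ lies in the unit group $G(M)p_1$.

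With all three hypotheses in hand, \Cref{T:Needed_for_1_based} yields a definable monoid $M_2\supseteq S_{G,\Delta}(M)$, extending the multiplication, with the same three properties; by the remark after that theorem $M_2$ is an inverse monoid, and, again by that remark, $S_{G,\Delta}(M)$ is an intersection of such definable inverse monoids, hence of definable inverse semigroups.

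The main obstacle I expect is the bookkeeping in the second step: one must be careful that $S_{G,\Delta}(M)$ is genuinely a monoid (so that the $\Delta$-type of $1_G$ is an available identity element), that its unit group is \emph{equal to} $G(M)p_1$ and not merely contained in it, and that the Newelski definability lemma indeed applies with $p=p_1$. Once these points are pinned down, the rest is a direct translation of facts already established in the excerpt.
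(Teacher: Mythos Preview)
Your proposal is correct and follows the same route as the paper: verify the three hypotheses of \Cref{T:Needed_for_1_based} (inverse hence commuting idempotents; definable unit group; every element of the form $ge$ by the preceding corollary) and invoke the theorem together with its remark. The only cosmetic difference is that the paper cites Newelski directly for definability of the unit group rather than identifying it with $G(M)p_1$; your version also goes through, since for a unit $p$ one has $p^{-1}p=p_1$, so by the kernel computation in the proof of the preceding corollary $\mathrm{Ker}(d_p)=\mathrm{Ker}(d_{p_1})$ and hence $p\in G(M)p_1$, dissolving the obstacle you flagged.
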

\begin{proof}
Since $S_G(M)$ is inverse so are the $S_{G,\Delta}(M)$. By \cite{Newelski-stable-groups} the unit group of $S_{G,\Delta}(M)$ is definable and by the previous corollary for every $p\in S_{G,\Delta}(M)$ there exists an idempotent $e$ and $g\in G(M)$ such that \[p=ge.\]
\end{proof}

\paragraph*{Acknowledgement}
I would like to thank my PhD advisor, Ehud Hrushovski for our discussions, his ideas and support, and his careful reading of previous drafts.

\bibliographystyle{plain}
\bibliography{semigroups-pub}

\end{document}